\newtheorem{tm}{Theorem}[section]
\newtheorem{lm}[tm]{Lemma}
\newtheorem{co}[tm]{Corollary}
\newtheorem{re}[tm]{Remark}
\newtheorem{pr}[tm]{Proposition}
\newcommand{\subscripts}[3]{%
  \@mathmeasure\z@\displaystyle{#2}%
  \global\setbox\@ne\vbox to\ht\z@{}\dp\@ne\dp\z@
  \setbox\tw@\box\@ne
  \@mathmeasure4\displaystyle{\copy\tw@_{#1}}%
  \@mathmeasure6\displaystyle{{#2}_{#3}}%
  \dimen@-\wd6 \advance\dimen@\wd4 \advance\dimen@\wd\z@
  \hbox to\dimen@{}\mathop{\kern-\dimen@\box4\box6}%
}
 \renewcommand{\theequation}{%
      \thesection.\arabic{equation}}
\begin{document}
\title[Strong uniqueness of Dirichlet operators related to stochastic quantization]
{Strong uniqueness for Dirichlet operators related to stochastic quantization 
under exponential/trigonometric interactions 
on the two-dimensional torus}
%in two-dimensional finite volume}
%%%%%%%%%%
\author[S. Albeverio]{Sergio Albeverio}
%\thanks{grant1}
\address[Sergio Albeverio]{Institut f\"ur Angewandte Mathematik and Hausdorff Center for Mathematics,
Universit\"at Bonn, Endenicher Allee 60, D-53115 Bonn, Germany} 
\email{albeverio@iam.uni-bonn.de}
%%%%%%%%%%
\author[H. Kawabi]{Hiroshi Kawabi}
%\thanks{grant2}
\address[Hiroshi Kawabi]{Department of Mathematics, Hiyoshi Campus, Keio University, 
4-1-1, Hiyoshi, Kohoku-ku, Yokohama 223-8521, Japan}
\email{kawabi@keio.jp}
%%%%%%%%%%%
\author[S.-R. Mihalache]{Stefan-Radu Mihalache}
\address[Stefan-Radu Mihalache]{is employed at a German federal authority.
%BaFin,
%Department Quantitative Risk Modelling, BaFin,
%Graurheindorfer Stra{\ss}e 108,
%D-53117 Bonn, Germany
}
%German Federal Financial Supervisory Authority,
%Bonn, Germany}
\email{stefanmihalache@gmx.de}
%, kamelei@web.de}
%\thanks{Views expressed in the present paper are those of the authors 
%and not necessarily those of BaFin.}
%do not necessarily reflect the official views of BaFin.}
%%%%%%%%
\author[M. R\"ockner]{Michael R\"ockner}
\address[Michael R\"ockner]{Fakult\"at f\"ur Mathematik,
Universit\"at Bielefeld,
Universit\"atsstra{\ss}e 25, 
D-33501 Bielefeld, Germany, and
Academy of Mathematics and Systems Science, 
Chinese Academy of Sciences, 
55, Zhongguancun East Road, 
Beijing, 100190, China}
\email{roeckner@math.uni-bielefeld.de}
%%%%%%%%%%
\keywords{Stochastic quantization, H\o egh-Krohn model, Sine-Gordon model, SPDE,
Dirichlet operator, Strong uniqueness}
\subjclass[2010]{60J10, 60F05, 60G50, 60B10}
%\date{April 26, 2020}

\maketitle
%%%%%%%%%%%%%%%%%%%%%%%%%%%%%%%
\begin{abstract}
We consider space-time quantum fields with
exponential/trigonometric interactions. 
In the context of Euclidean quantum field theory, the former and the latter are called the H{\o}egh-Krohn model 
and the Sine-Gordon model, respectively.
The main objective of the present paper is to construct infinite dimensional diffusion processes 
which solve modified stochastic quantization equations for these quantum fields on
the two-dimensional torus
by the Dirichlet form approach and to
prove strong uniqueness of
the corresponding Dirichlet operators.
\end{abstract}
%%%%%%%%%%%%%%%%%%%%%%%%%%%%%%%%
\section{Introduction}
%%%%%%%%%%%%%%
In recent years, there has been a growing interest in 
the 
%mathematical 
study of 
infinite dimensional stochastic dynamics
associated with various models of
Euclidean quantum field theory, hydrodynamics
and statistical mechanics, see, e.g., 
%Albeverio--Flandoli--Sinai 
\cite{AFS, 
%Albeverio--Kondratiev--Kozitsky--R\"ockner \cite{
AKKR-book, H, GIP, 
AKus-19} and references therein.
One of the principal programs
in these studies is to obtain Gibbs measures 
as equilibrium states of
stochastic processes taking values in infinite dimensional state spaces.

In Euclidean quantum field theory, the space-time free field in finite volume 
is given by the Gaussian measure $\mu_{0}$ on 
%the space of distributions 
a Sobolev space of negative order
$H^{-\delta}({\mathbb T}^{2})$ defined on the 2-dimensional torus ${\mathbb T}^{2}=
(\mathbb R/ 2\pi \mathbb Z)^{2}$
%[0,2\pi]^{2}$
%a bounded domain $\Lambda \subset \mathbb R^{2}$
with zero mean and covariance operator $(1-\Delta)^{-1}$, $\Delta$ being
the Laplacian in $L^{2}(\mathbb T^{2})$ with periodic boundary conditions.
%%%%%%%%%%%
Heuristically,
$\mu_{0}$ is given by the expression
%represented as 
\begin{equation}
\mu_{0}(d\phi)
\varpropto
\exp \Big \{ -\frac{1}{2} 
\int_{{\mathbb T}^{2}} \Big( \phi(x)^{2}+\vert \nabla \phi(x) \vert_{{\mathbb R}^{2}}^{2} \Big) \hspace{0.5mm}
dx
\Big \}
\prod_{x\in {\mathbb T}^{2}} d\phi(x),
\label{formal-free}
\nonumber
\end{equation}
and it is also called the {\it{massive Gaussian free field}}.
The $\phi^{2m}_{2}$-{\it{quantum field}}, a special case of the
$P(\phi)_{2}$-{\it{quantum field in finite volume}}, is one of the most important objects
of study in Euclidean quantum field theory (see e.g., \cite{Si, GJ}), and is given by the 
probability measure
\begin{equation}
\mu^{(2m)}_{\sf pol}(d\phi)=\frac{1}{{Z}^{(2m)}_{\sf pol}}
\exp \Big (
%-\frac{1}{2} \big( \phi, (1-\Delta)\phi \big)_{L^{2}(\Lambda)}
- \int_{\mathbb T^{2}}
:\hspace{-0.8mm} \phi^{2m}(x) \hspace{-0.8mm}:
%\hspace{-1mm} (x) \hspace{0.5mm}
dx \Big ) \hspace{0.5mm}
\mu_{0}(d\phi), \quad m=2,3,4\ldots,
%\prod_{x\in \Lambda} d\phi(x),
\label{phi42-Gibbs}
\nonumber
\end{equation}
where 
%$a>0$ is a coupling constant, 
$:\hspace{-0.8mm} \phi^{2m} \hspace{-0.8mm}:$ denotes the $2m$-th order
Wick power of $\phi$ with respect to $\mu_{0}$ and 
%the normalizing constant 
${Z}^{(2m)}_{\sf pol}>0$ is the normalizing constant given by
\begin{equation}
{Z}^{(2m)}_{\sf pol}
:=\int_{H^{-\delta}(\mathbb T^{2})} \exp 
\Big(-\int_{\mathbb T^{2}}  :\hspace{-0.8mm} \phi^{2m}(x) \hspace{-0.8mm}:
%\hspace{-1mm} (x) \hspace{0.5mm}
dx \Big) \hspace{0.5mm} \mu_{0}(d\phi).
\nonumber
\end{equation}
%In the case $m=2$, this measure is called $(\phi^{4})_{2}$-measure.

Parisi and Wu \cite{PW} proposed the above program to find a Markov process or a class of Markov
processes so that $\mu^{(2m)}_{\sf pol}$ is their invariant measure. 
This program is now called the {\it{stochastic quantization}}.
(More recently, following 
%Hairer 
\cite{H}, such processes with $\mu^{(2m)}_{\sf pol}$ as an invariant measure 
have been called {\it{dynamical $\phi_{2}^{2m}$-models}}.)
%%%%%%%%%%%%%%%%%%%%%%%%%%%%%%%
In the well-known paper \cite{JM}, 
Jona-Lasinio and Mitter firstly provided a 
rigorous justification of the program for $P(\phi)_{2}$-quantum fields in finite volume.
Strictly speaking, they constructed an infinite dimensional continuous Markov process 
%taking values in 
%a 
%space of distributions 
%$H^{-1}(\Lambda)$ 
%whose unique invariant measure is coincident with
having the $\phi^{4}_{2}$-measure $\mu^{(4)}_{\sf pol}$ (i.e., $P(\phi)_{2}$-measure in the case $m=2$)
as an invariant measure.
Applying the Girsanov transform method to the Ornstein--Uhlenbeck process 
whose unique invariant measure is the space-time free field measure $\mu_{0}$,
they obtained the resulting Markov process as the unique weak solution of the so-called
{\it{(regularized) stochastic quantization equation}} (SQE in short)
%stochastic partial differential equation (SPDE in short) 
\begin{equation}
\partial_{t}X_{t}(x)=-\frac{1}{2}(1-\Delta)^{1-\gamma} X_{t}(x) 
-m(1-\Delta)^{-\gamma}\hspace{-0.5mm}
:\hspace{-0.5mm}X^{2m-1}_{t}(x) \hspace{-0.5mm}:
+(1-\Delta)^{-\frac{\gamma}{2}}
\xi_{t}(x),
%dW_{t}, 
\quad t>0,~x\in \mathbb T^{2},
\label{SQE-0}
\end{equation}
where the positive constant $\gamma$
%$\varepsilon:=1-\gamma>0$ 
%is a sufficiently small constant which 
provides a 
regularizing effect on the nonlinear drift
term and on the noise term, $\xi=\{ \xi_{t}=(\xi_{t}(x))_{x\in {\mathbb T}^{2}} \}_{t\geq 0}$ 
being an $\mathbb R$-valued Gaussian 
space-time white noise, that is, %(It is worth mentioning that 
$\xi$ is
% can be thought of as is 
the time derivative of 
%($\xi_{t}=\dot B_{t}$ where $(B_{t})_{t\geq 0}$ is 
a standard $L^{2}(\mathbb T^{2})$-cylindrical Brownian motion 
$\{ B_{t}=(B_{t}(x) )_{x\in {\mathbb T}^{2}}\}_{t\geq 0}$.
%%%%%%%%%%%%%%%%%%%

Since then, there has been a large number of follow-up papers on stochastic quantization, and 
both theories of SPDEs and Dirichlet forms on infinite
dimensional state spaces have been developed intensively. 
The Dirichlet form approach was later elaborated in a series of papers 
\cite{AR89, AR90, AR-SDE}. 
%by Albeverio and R\"ockner.
%%%%%%%%%%%%%%%%%%%%%%%%%%%%%%%%
In particular, in \cite{AR90} the closability result was given, and in \cite{AR89} 
the existence of a diffusion (i.e., a path continuous strong Markov
process) was proved and in \cite{AR-SDE} it was shown that
this process indeed solves
%solving 
equation (\ref{SQE-0}) in the weak sense for quasi-every initial condition
has been shown for $\gamma \geq 0$. In this way the generator of the diffusion process has been
identified to be the self-adjoint negative operator $\mathcal L$ in $L^{2}(\mu^{(2m)}_{\sf pol})$ 
associated with the classical
quasi regular Dirichlet form given by the $\phi^{2m}_{2}$-measure $\mu^{(2m)}_{\sf pol}$.
The problem of its being uniquely determined by the closure from a minimal domain 
${\mathfrak{F}}C^{\infty}_{b}$ (of cylindrical functions) was solved in the sense of Markov uniqueness by
%R\"ockner and Zhang 
\cite{RZ} for $\gamma>0$.
Strong uniqueness in the strong sense of essential self-adjointness of ${\mathcal L}$ in the relevant $L^{2}$-space
was solved by 
%Liskevich and R\"ockner 
\cite{LR, DT00}, 
%and Da Prato and Tubaro \cite{DT00}, 
still for $\gamma>0$. 
A (weaker) property of {\it{restricted}} Markov uniqueness has been recently obtained by 
%R\"ockner, Zhu and Zhu 
\cite{RZZ} for 
$\gamma=0$. This yields uniqueness within the class of {\it{quasi-regular}} Dirichlet forms whose generators 
extend $({\mathcal L}, {\mathfrak{F}}C^{\infty}_{b})$. Moreover these authors show that the martingale problem for
$({\mathcal L}, {\mathfrak{F}}C^{\infty}_{b})$, up to a certain ``$\mu$-equivalence" is unique, in the sense that there
is a unique strong Markov process solving that martingale problem (cf. \cite{MiRo, AMR}). In this way
also the uniqueness of probabilistic weak solutions of equation (\ref{SQE-0}) with $\gamma=0$ has been shown.
In \cite{RZZ} in addition to \cite{AR-SDE} also results of \cite{DD} are used, which yields strong solutions.
For uniqueness of the invariant measures in terms of density under translations, see \cite{RZZ2}.  
%%%%%%%%%%%%%%%%%%%%%%%%%%%

Let us mention at this point that the original motivation
to study measures like the $P(\phi)_{2}$-measure came from 
quantum field theory on Euclidean space-time $\mathbb R^{d}$,
in particular where the 2-dimensional torus $\mathbb T^{2}$ is replaced by
the 2-dimensional space-time $\mathbb R^{2}$. Such measures are called
(infinite volume) $P(\phi)_{2}$-measures and can indeed be constructed from 
limits of measures constructed relative to the torus, as shown in the work \cite{GRS},
see \cite{GJ, Si} for other constructions of such infinite volume measures. 
The invariance of these measures under the action of the Euclidean group is
important to construct the associated relativistic quantum fields, the original
physical motivation for studying Euclidean measures. See e.g., \cite{GJ, Si, Dimock,  
AHkFeL, A-Streit, Summers, AHPRS, HKPS}. The corresponding SQE (\ref{SQE-0})
with $\gamma>0$ has been studied in the sense of associating a Dirichlet form and a
Markov semigroup in 
%Borkar--Chari--Mitter 
\cite{BCM}. The construction of an 
associated diffusion has been first performed also for 
$\gamma \geq 0$ in \cite{AR89} and it was shown in \cite{AR-SDE}
that it solves (\ref{SQE-0}) in the infinite volume case.
For partial results on the uniqueness problems (corresponding to those we mentioned in the case of 
the torus model), see \cite{ARZ, LR, RZZ}. The result in \cite{RZZ} also applies to the infinite volume case.
Here recent results of \cite{MW}, related to methods developed in \cite{H, GIP, CC, GH19, AKus-19} are used.
In \cite{AKonR} ergodicity of the stochastic dynamics associated to the infinite volume $P(\phi)_{2}$-models
has been proven in the situation of ``pure phases" (in the sense of \cite{GRS-75}).

In the present paper, we initiate the study of the corresponding SQE for a model different from
the $P(\phi)_{2}$-case, but which also leads to interesting relativistic quantum fields, as shown at about
the same time as for the $P(\phi)_{2}$-case. This quantum field model was introduced by H\o egh-Krohn
\cite{Hk-71} in a Hamiltonian setting, and its Euclidean version was constructed  in \cite{AHk}.
In the latter paper \cite{AHk}, the probability measure
%{\it{${\rm{exp}}(\phi)_{2}$-quantum field}}
%was 
%given by the probability measure
%%%%%%%%%%%%%%%%%%%%%%%%
\begin{equation}
\mu^{(a)}_{\sf exp}(d\phi)=\frac{1}{Z^{(a)}_{\sf exp}}
\exp \Big (
- \int_{\mathbb T^{2}}
:\hspace{-0.8mm} \exp (a\phi)(x) \hspace{-0.8mm}:
dx \Big )
\hspace{0.5mm}
\mu_{0}(d\phi)
%, \quad -{\sqrt{4\pi}}<a <{\sqrt{4\pi}}
\nonumber
\label{exp-Gibbs}
\end{equation}
%%%%%%%%%%%%%%%%%%%%%%%%%
was constructed and shown
to yield interesting relativistic quantum fields, where
$Z^{(a)}_{\sf exp}>0$ is the normalizing constant, $a\in 
( -{\sqrt{4\pi}}, {\sqrt{4\pi}})$ is called the {\it{charge parameter}}, and the {\it{Wick exponential}} 
$:\hspace{-0.8mm} \exp (a\phi)(x) \hspace{-0.8mm}:$
is formally introduced by the expression
$$ :\hspace{-0.8mm} \exp (a\phi)(x) \hspace{-0.8mm}:
\hspace{0.5mm}
=\exp \Big( a \phi(x)-\frac{a^{2}}{2} {\mathbb E}^{\mu_{0}} [ \phi(x)^{2}] \Big), \qquad x\in {\mathbb T}^{2},
$$
${\mathbb E}^{\mu_{0}}$ standing for the expectation with respect to the massive Gaussian free field $\mu_{0}$
(given heuristically by (\ref{formal-free})).
This model is called the {\it{H\o egh-Krohn model}} or
the {\it{${\rm{exp}}(\phi)_{2}$-model}}.
The papers 
\cite{AGHk,
%and \cite{
AHk-martingale} study extensions of the model to $\mathbb R^{d}$ for $d\geq 2$,
and also show relations with martingale theory and multiplicative noise. 
%Furthermore 
We also mention that
%Let us further mention that 
this model
%${\rm{exp}}(\phi)_{2}$-model 
has also been discussed in the framework of
nonstandard analysis (cf. \cite{AHkFeL}) and white noise analysis (cf. \cite{AHPRS, HKPS}),
as well as in connection with relativistic quantum fields on curved spaces \cite{FHN75}.
A connection with 
problems in representation theory of groups of mappings has been discussed in \cite{AHkT},
see also \cite{AHMTT93}, and for new developments, \cite{ADGV16}.
The relevance of this model has also been pointed out
in connection with string theory \cite{AJPS, GT13}, and recently rediscovered in connection 
with topics like Liouville quantum gravity, where
%(cf. e.g., \cite{Kahane, RV, DS}). 
%We should mention that
the random measure, called the
{\it{Gaussian multiplicative chaos}},
$$ {\mathcal M}^{(a)}_{\phi}(dx)= \hspace{0.5mm}:\hspace{-0.8mm} \exp (a\phi)(x) \hspace{-0.8mm}: dx, \quad
x\in {\mathbb T}^{2},
\phi \in H^{-\delta}
({\mathbb T}^{2})$$
%is called the {\it{Gaussian multiplicative chaos}} and it 
plays a central role 
%in the context of Liouville quantum gravity 
(cf. e.g., \cite{Kahane, Kusuoka, DS, RV, AK16, BSS14, DS19, Garban}). 

The H\o egh-Krohn model is also closely connected with the {\it{Sine-Gordon model}} 
(or the {\it{${\rm{cos}}(\phi)_{2}$-quantum field model}})
%which is given by
%%%%%%%%%%%%%%%%%%%
\begin{equation}
\mu^{(a)}_{\sf cos}(d\phi)=\frac{1}{Z^{(a)}_{\sf cos}}
\exp \Big (
- \int_{\mathbb T^{2}}
:\hspace{-0.8mm} \cos (a\phi)(x) \hspace{-0.8mm}:
dx \Big )
\hspace{0.5mm}
\mu_{0}(d\phi),
%, \quad -{\sqrt{4\pi}}<a <{\sqrt{4\pi}}
\label{cos-Gibbs}
\nonumber
\end{equation}
%%%%%%%%%%%%%%%%%%%
where 
$:\hspace{-0.8mm} \cos (a\phi)(x) \hspace{-0.8mm}:$ is defined by
$$ :\hspace{-0.8mm} \cos (a\phi)(x) \hspace{-0.8mm}: \hspace{1mm}=\frac{1}{2} \Big (
\hspace{-0.8mm}:\hspace{-0.8mm} \exp ({\sqrt{-1}}a\phi)(x) \hspace{-0.8mm}: 
+
:\hspace{-0.8mm} \exp (-{\sqrt{-1}}a\phi)(x) \hspace{-0.8mm}: \hspace{-0.8mm} \Big ).
$$
As with the H\o egh-Krohn model,
this quantum field model 
and 
%their generalizations
%superpositions
generalizations of these models 
as
$\int 
\hspace{-0.8mm}
:\hspace{-0.8mm} \exp (a\phi)(x) \hspace{-0.8mm}:
\hspace{-0.8mm}
\nu(da)$
and 
$\int 
\hspace{-0.8mm}
:\hspace{-0.8mm} \cos (a\phi)(x) \hspace{-0.8mm}:
\hspace{-0.8mm}
\nu(da)$
have also been 
studied for a long period
%intensively 
by many authors,
where
$\nu(da)$ is a positive Radon measure on the interval $(-{\sqrt{4\pi}},
{\sqrt{4\pi}})$ and not necessarily a Dirac measure.
See e.g., \cite{AHk73, AHk, F, FS76, FP, AHPRS,AHPRS89b}. 
All this 
%circumstance 
gives us much motivation to study SQEs and the 
associated Dirichlet forms for such non-polynomial models. 
In the case where $\mathbb R^{2}$ and $\mathbb T^{2}$ are replaced by
$\mathbb R$, this has been discussed in \cite{AKR12}, which provides solutions of the SQE
and proves strong uniqueness of the corresponding Dirichlet operator
for any $\gamma \geq 0$. For the same
% corresponding 
problem where ${\rm exp}(\phi)_{1}$ is replaced by $P(\phi)_{1}$, see \cite{Iwa, KR07}.

The main purpose of the present paper is to study regularized
%stochastic quantization equations 
SQEs both for 
the H\o egh-Krohn model 
\begin{equation}
\partial_{t}X_{t}(x)=-\frac{1}{2}(1-\Delta)^{1-\gamma} X_{t}(x) 
-\frac{a}{2}(1-\Delta)^{-\gamma}
:\hspace{-0.5mm}{\rm{exp}}(aX_{t}(x)) \hspace{-0.5mm}: 
+(1-\Delta)^{-\frac{\gamma}{2}}
\xi_{t}(x),
%dW_{t}, 
\quad t>0, x\in \mathbb T^{2},
\label{Miha-SQE}
\end{equation}
and for the Sine-Gordon model
\begin{equation}
\partial_{t}X_{t}(x)=-\frac{1}{2}(1-\Delta)^{1-\gamma} X_{t}(x) 
+\frac{a}{2}(1-\Delta)^{-\gamma}
:\hspace{-0.5mm}{\rm{sin}}(aX_{t}(x)) \hspace{-0.5mm}: 
+(1-\Delta)^{-\frac{\gamma}{2}}
\xi_{t}(x),
%dW_{t}, 
\quad t>0, x\in \mathbb T^{2}
\label{SG-SQE}
\end{equation}
%these non-polyonomial quantum fields 
by the Dirichlet form approach. Furthermore, we prove strong uniqueness
of the corresponding Dirichlet operators
in the relevant $L^{p}$-spaces for all $p\geq 1$.
To the best of our knowledge, 
except $L^{1}$-uniquness obtained by \cite{Wu}, 
there seems to be only a few results on $L^{p}$-uniqueness of
Dirichlet operators associated with (\ref{Miha-SQE}) and (\ref{SG-SQE})
for general $p\geq 1$.
%
% based on the argument in \cite{LR}.
We mention here that  Mihalache \cite{M} first 
proved the unique existence of the regularized SQE (\ref{Miha-SQE}) in the 
weak probabilistic sense
under restrictive conditions 
on $0<\gamma \leq 1$ and the 
charge parameter $a$,
%He proved the unique existence of (\ref{Miha-SQE}) in the 
%weak probabilistically sense
%The study of such a SQE was first performed in \cite{M}, 
%see also \cite{Alb-M}, 
%where existence of unique
%solutions was proven using 
by combining several properties of the ${\rm{exp}}(\phi)_{2}$-measure 
$\mu_{\sf exp}^{(a)}$ established in
\cite{AHk} with approximation methods in \cite{DT-Note}.
Quite recently, influenced by the quick development on singular SPDEs (cf. \cite{H, GIP}), 
unique probabilistic strong solutions to the original SQEs (\ref{Miha-SQE}) and (\ref{SG-SQE}) (i.e., with $\gamma=0$) 
were constructed by \cite{Garban, HKK, ORW19} and \cite{HS, CHS19}, respectively.
However, these results do not imply strong uniqueness of the corresponding Dirichlet operator in general
(see Remark \ref{compare} below).
Let us also mention that a recent approach on stochastic quantization of the ${\rm{exp}}(\phi)_{2}$-model
on $\mathbb T^{2}$ or $\mathbb R^{2}$, 
and of the $P(\phi)_{2}$-model on $\mathbb T^{2}$,
through elliptic SPDEs, has been developed in \cite{ADG19}
(based on previous work on dimensional reduction \cite{ADG18}).

The structure of the present paper is as follows: In Section 2, we describe our framework and state
the main results. To summarize them, let us stress that both the 
%H\o egh-Krohn model
${\rm{exp}}(\phi)_{2}$-measure
and the ${\rm{cos}}(\phi)_{2}$-measure
%the Sine-Gordon model 
have been 
constructed for values of the charge constant $a$ in
%entering it which belong to 
the interval $(-{\sqrt{4\pi}},
{\sqrt{4\pi}})$. Our methods permit to cover the whole range of this parameter and show that the pre-Dirichlet
form associated with (\ref{Miha-SQE})
(resp. (\ref{SG-SQE})), i.e., associated with the ${\rm{exp}}(\phi)_{2}$-measure 
$\mu^{(a)}_{\sf exp}$ (resp. the ${\rm{cos}}(\phi)_{2}$-measure $\mu^{(a)}_{\sf cos}$), for all 
%$\vert a \vert <{\sqrt{4\pi}}$
$a \in (-{\sqrt{4\pi}},{\sqrt{4\pi}})$
is closable, and one has $L^{p}$-uniqueness of the corresponding 
Dirichlet operator for all $1\leq p < \frac{1}{2}
(1+\frac{4\pi \gamma}{a^{2}})$ with $\vert a \vert <{\sqrt{4\pi \gamma}}$.
As a corollary we show the existence of 
%$\mu$-symmetric 
diffusion processes solving (\ref{Miha-SQE}) and (\ref{SG-SQE}) weakly for quasi every
starting point in the state space $H^{-\delta}(\mathbb T^{2})$ for suitable $\delta>0$.
%Moreover the measure $\mu$ is an invariant measure for this process. 
In Section 3, we construct the Wick power, the Wick exponential and the Wick trigonometric mappings, 
including also a new detailed 
estimate for Wick powers (Proposition \ref{Def-Wick}) and
a proof of the finite dimensional approximation of the Wick exponential and the Wick trigonometric mappings
(Theorem \ref{Wick-Exp} and Corollary \ref{Wick-Tri}). 
In Section 4, we provide the proof of our main result (Theorem \ref{ES}) by combining the tools of Section 3 with
the argument in \cite{LR}. Finally, in the appendix, we present 
%The tools of Section 3 are used in Section
%4 to provide the proof of the main results. K
key estimates on Green's function, which are used in Sections 3 and 4.

Throughout the present paper, we denote by
$C$ 
%unimportant 
positive constants which
%denotes a positive constant that 
may change from line to line.
When their dependence on some parameters are significant, we specify them
as
%If the dependence of $C$ is significant, we denote it for example as 
$C_{p}$, $C(\varepsilon)$, etc.
Besides, we use 
the notation $ a \lesssim b$ if there exists a positive constant $C$, independent of the variables under consideration,
such that $a \leq C\cdot b$.
% $\gtrsim$
%If we want to emphasize the dependence of $c$ on the variable $x$, then we write $a(x) \lesssim_{x} b(x)$.
%$c, C, c_{i}, C_{i}$ ($i=1,2,\ldots$) as positive constants
%which may be different at different occurrences.
%%%%%%%%%%%%%%%%%%%%%%%%%%%%%%%%%%%%%%%%%%%%%%%%%%
\section{Framework and results}
%%%%%%%%%%%%%%%%%%%%%%%%%%%%%%%%%%%%%%%%%%%%%%%%%%%%%%%%%%%%%%%%%%%

We begin by introducing some notations and objects we will be working with.
Let ${\mathbb T}^{2}=(\mathbb R/ 2\pi \mathbb Z)^{2}$ be the 2-dimensional torus equipped with
the Lebesgue measure $dx$. 
Let $L^{2}({\mathbb T}^{2}; {\mathbb C})$ 
%(${\mathbb K}={\mathbb R}, {\mathbb C}$) 
be the Hilbert space consisting all $\mathbb C$-valued
Lebesgue square integrable functions equipped with the usual inner product
$$ \big (f,g \big)_{L^{2}}=\int_{\mathbb T^{2}} f(x){\overline{g(x)}}dx, \quad f,g\in L^{2}({\mathbb T}^{2}; {\mathbb C}).$$
%In the present paper, 
For ${k}=(k_{1}, k_{2}) \in {\mathbb Z}^{2}$ and $x=(x_{1}, x_{2}) \in {\mathbb T}^{2}$,
we write $\vert k \vert=(k_{1}^{2}+k_{2}^{2})^{1/2}$, $\Vert k \Vert=\max \{ \vert k_{1} \vert, \vert k_{2} \vert \}$ 
and
$k\cdot x=k_{1}x_{1}+k_{2}x_{2}$.
%We set $D(N):=\{ k\in {\mathbb Z}^{2}; \vert k \vert \leq N \}$ and 
%$Q(N):=\{ k\in {\mathbb Z}^{2}; \Vert k \Vert \leq N \}$ ($N\in \mathbb N$).
We define the space of distributions $\mathcal{S}' 
%$\mathscr{S}'
(\mathbb T^{2}; {\mathbb C})$ by the set of linear maps $u$ from $\mathcal{S}
%$\mathscr{S}'
(\mathbb T^{2}; {\mathbb C})=C^{\infty}({\mathbb T}^{2}; \mathbb C)$ to $\mathbb C$, such that there 
exist $N\in \mathbb N$
and $C>0$ with
$$ \vert u(\varphi) \vert \leq C \max_{0\leq  i+j  \leq N}  \max_{x\in {\mathbb T}^{2}}
\Big \vert \frac{\partial^{i+j} \varphi}{\partial x_{1}^{i}  \partial x_{2}^{j} }
(x) \Big \vert, \quad 
\varphi \in \mathcal{S}(\mathbb T^{2}; {\mathbb C}).
 $$
Since $\mathcal{S}(\mathbb T^{2}; {\mathbb C}) \subset L^{2}(\mathbb T^{2}; {\mathbb C}) \subset 
{\mathcal S}'(\mathbb T^{2};\mathbb C)$,
the $L^{2}$-inner product $\big( \cdot, \cdot \big)_{L^{2}}$ is naturally 
extended to the pairing of 
$\mathcal{S}(\mathbb T^{2}; {\mathbb C})$
and its dual space 
${\mathcal S}'(\mathbb T^{2};\mathbb C)$.
%%%%%%%%%%%%%%%%%%%%%%%%%%%%%%%%
%%%%%%%%%%%%%%%%%%%%%%%%%%%%%%%%%
%Although we work in the framework of real-valued functions, it is sometimes easier to do computations by using
%the corresponding complex basis
%a CONS of $L^{2}(\Lambda; {\mathbb C})$ given by
Let $\{ {\bf e}_{k}; k \in \mathbb Z^{2} \}$ be the usual complete 
orthonormal system (CONS) of $L^{2}({\mathbb T}^{2}; \mathbb C)$
consisting of the $\mathbb C$-valued functions
$$
{\bf e}_{k}(x)=\frac{1}{2\pi}e^{{\sqrt{-1}}k\cdot x}, \quad  k\in {\mathbb Z^{2}},~x\in \mathbb T^{2}. 
$$
For each $u \in \mathcal{S}' 
(\mathbb T^{2}; \mathbb C)$, we define the Fourier transform $\hat{u}: {\mathbb Z}^{2} \to {\mathbb C}$ by
$$\hat{u}(k):=
\subscripts
 {\mathcal{S}'}
{ \langle  u, {\bf e}_{k} \rangle}
{\mathcal{S}},
%u({\rm Im} \hspace{0.5mm} ({\bf e}_{-k})), 
\quad k\in {\mathbb Z}^{2}.
$$
%where
%$$
%{\bf e}_{k}(x)=\frac{1}{2\pi}e^{{\sqrt{-1}}k\cdot x}, \quad k\in {\mathbb Z}^{2}.
%$$
%Note that $\overline{\hat{u}(k)}=\hat{u}(-k)$ and for 
%$u\in L^{2}(\mathbb T^{2})$
%%%%%%%%%%%%%%%%%%%%%%
%Note that ${\overline{\hat u (k)}}={\hat u}(-k)$ holds for $k\in \mathbb Z^{2}$.
%$$ \hat{u}(k)=
% \int_{\mathbb T^{2}} u(x) 
%{\bf e}_{-k}(x)
%dx, \qquad k\in \mathbb Z^{2}
%$$
%holds for $u\in L^{2}(\mathbb T^{2})$.
%%%%%%%%%%%%%%%%%%%%%%%%

%For $s\in {\mathbb R}$, 
We define the real $L^{2}$-Sobolev space of order $s$ with periodic boundary condition
by
$$ 
%H^{s}=
H^{s}(\mathbb T^{2})=\left\{ u\in {\mathcal S}'(\mathbb T^{2}; {\mathbb C}) \hspace{0.5mm} ; 
\sum_{k \in {\mathbb Z}^{2}} (1+\vert k \vert^{2})^{s} 
\vert {\hat u} (k) \vert^{2}
%\vert \langle u, {\bf e}_{k} \rangle \vert^{2}
<\infty,~~{\hat u}(-k)={\overline{{\hat u}(k)}},~k\in {\mathbb Z}^{2} 
\right\}, \quad s\in \mathbb R.
$$
This space is a Hilbert space equipped with the inner product
$$ { (}u,v{ )}_{H^{s}}=
\sum_{k \in {\mathbb Z}^{2}} (1+\vert k \vert^{2})^{s}   
{\hat u}(k)
%\langle u, {\bf e}_{k} \rangle
%{\overline{
{\hat v}(-k),
%}}
%\langle v, {\bf e}_{k} \rangle}}   , 
\quad u,v \in H^{s}(\mathbb T^{2}).$$
Note that $H^{0}({\mathbb T}^{2})$ coincides with $L^{2}(\mathbb T^{2}):=L^{2}({\mathbb T}^{2}; {\mathbb R})$ and
%The space $H^{\alpha}({\mathbb T}^{2})$
%is called the Sobolev space of order $\alpha$ with periodic boundary condition. 
%Along the same lines, for any $\alpha>0$, we introduce
%the inner product in the corresponding negative index space by
%$$ {\big (}u,v{\big )}_{H^{-\alpha}({\mathbb T}^{2})}:=\sum_{k \in {\mathbb Z}^{2}} (1+\vert k \vert^{2})^{-\alpha} 
%(u, e_{k})_{L^{2}({\mathbb T}^{2})}
%(v, e_{k})_{L^{2}({\mathbb T}^{2})}, \quad u,v \in L^{2}({\mathbb T}^{2}),$$
%and define by $H^{-\alpha}({\mathbb T}^{2})$ the completion 
%of $L^{2}({\mathbb T}^{2})$ with respect to the associated norm
%$\Vert \cdot \Vert_{H^{-\alpha}({\mathbb T}^{2})}$. 
%We call 
%$H^{\alpha}({\mathbb T}^{2})$ $(\alpha\in \mathbb R)$ the Sobolev space of
%order $\alpha$ with periodic boundary condition, and 
we regard $H^{-s}({\mathbb T}^{2})$ as the dual space of $H^{s}({\mathbb T}^{2})$
%by using 
through the standard chain
$$ H^{s}({\mathbb T}^{2})  \subset 
%H^{0}({\mathbb T}^{2})  =\equiv H^{0}({\mathbb T}^{2})^{*} 
L^{2}({\mathbb T}^{2}) \subset 
H^{-s}({\mathbb T}^{2}), \qquad s \geq 0.$$
We denote by $\subscripts
 {H^{-\alpha}}
{\langle \cdot,  \cdot  \rangle}
{H^{\alpha}}
$ 
%($\alpha \geq 0$)
the dualization between $H^{\alpha}(\mathbb T^{2})$ and its dual space 
$H^{-\alpha}(\mathbb T^{2})$. 
%Then it follows that
%$$ 
%\subscripts
% {H^{-\alpha}}
%{ \langle  u,  v  \rangle}
%{H^{\alpha}}=(u,v)_{L^{2}}, \qquad u\in L^{2}(\mathbb T^{2}), v\in H^{\alpha}(\mathbb T^{2}).
%$$

Let $\Delta$ be the Laplacian with periodic boundary condition
acting on $L^{2}(\mathbb T^{2})$. It is a self-adjoint negative
operator in $L^{2}(\mathbb T^{2})$ with
${\rm Dom}(\Delta)=H^{2}({\mathbb T}^{2})$. We set $A:=(1-\Delta)^{-1}$
and $\lambda_{k}:=1+\vert k \vert^{2}$ ($k\in {\mathbb Z}^{2}$).
Note that $A$ is a compact 
%as well as
and self-adjoint operator acting on $L^{2}(\mathbb T^{2})$
and that the spectra of both $1-\Delta$ and $A$ consist only of eigenvalues.
Let $\{ e_{k}; k \in \mathbb Z^{2} \}$ be the usual CONS of $L^{2}({\mathbb T}^{2})$
associated with $\{ {\bf e}_{k}; k \in \mathbb Z^{2} \}$, that is,
%Namely, 
%consisting of
$e_{(0,0)}(x)=(2\pi)^{-1}$ and
\begin{equation*}
e_{k}(x)
=
\frac{1}{{\sqrt 2}\pi}
\begin{cases}
\displaystyle{
\cos(k\cdot x)
},
& \text{ $k\in {\mathbb Z}^{2}_{+}$}
\\ 
\displaystyle{
\sin(k\cdot x)
},
& \text{ $k\in {\mathbb Z}^{2}_{-}$},
\end{cases}
\quad x\in {\mathbb T}^{2},
\label{CONS}
\end{equation*}
where ${\mathbb Z}^{2}_{+}=\{ (k_{1}, k_{2}) \in {\mathbb Z}^{2} ; \hspace{0.5mm} k_{1}>0 \} 
\cup \{ (0, k_{2}) ; \hspace{0.5mm} k_{2}>0 \}$ and ${\mathbb Z}^{2}_{-}=-{\mathbb Z}^{2}_{+}$.
%We set $\lambda_{k}:=1+\vert k \vert^{2}$ for $k\in {\mathbb Z}^{2}$. 
Then we have that $\{(\lambda_{k}, e_{k}); k \in \mathbb Z^{2} \}$ 
and $\{(\lambda_{k}^{-1}, e_{k}); k \in \mathbb Z^{2} \}$ 
are normalized eigenbasis for  
$1-\Delta$ and $A$, respectively. Namely, we have 
$$ (1-\Delta) e_{k}=\lambda_{k}e_{k},~~Ae_{k}=\lambda_{k}^{-1} e_{k}, \qquad 
k \in \mathbb Z^{2}.$$
Setting $e_{k}^{(s)}:=\lambda^{-s/2}_{k} e_{k}$ ($s \in \mathbb R$), 
we obtain a CONS $\{ e_{k}^{(s)}; k\in \mathbb Z^{2} \}$ of $H^{s}(\mathbb T^{2})$.
Then the operator $A$ can be extended to $H^{-s}(\mathbb T^{2})$ $(s>0)$ by setting
$Ae_{k}^{(-s)}:= \lambda_{k}^{-1} e_{k}^{(-s)}$ ($k\in \mathbb Z^{2}$).
Note that $A$ is not a trace class operator because of
$$ {\rm Tr}(A)=\sum_{k \in \mathbb Z^{2}} \lambda_{k}^{-1}=
\sum_{k \in \mathbb Z^{2}} \frac{1}{1+\vert k \vert^{2}}
=\infty.$$
%\quad \sum_{k \in \mathbb Z^{2}} \lambda_{k}^{-(1+\delta)}<\infty~\mbox{ for }
%\delta>0.$$
On the other hand, 
it follows from 
\begin{equation*}
{\rm Tr}(A^{p})=\sum_{k \in \mathbb Z^{2}} \lambda_{k}^{-p}
=
\sum_{k \in \mathbb Z^{2}} \frac{1}{(1+\vert k \vert^{2})^{p}}
<\infty,  \quad p>1
\end{equation*}
that 
$A^{p}$ is a trace class operator for any $p>1$.

Throughout the present paper, we fix two parameters $\delta(>0)$ and $\gamma$ such that
\begin{equation}
%\alpha>\frac{1}{2},~ \delta>\frac{1}{2},~
0<\gamma \leq 1,~
\delta+2\gamma >2.
%2\gamma+\delta>2.
\label{index-assume}
\end{equation}
%$\alpha \geq 0$ and $1/2<\delta \leq 1$ 
We 
%$0<\delta \leq 1$ 
set $E:=H^{-\delta}({\mathbb T}^{2})$, $H:=L^{2}({\mathbb T}^{2})$, 
${\mathcal H}:=H^{\gamma}({\mathbb T}^{2})$
and 
define the space-time free field 
%(i.e., massive Gaussian free field) 
by
$\mu_{0}:=N(0,A^{1+\delta})$. Namely,
%By applying the Gauss-Minlos-Sazonov theorem, there exists
%Namely, 
$\mu_{0}$ is the
mean zero
%a unique mean zero 
Gaussian probability measure supported on $E$ such that
\begin{equation}
\int_{E}  (l_{1}, z )_{E}(l_{2}, z )_{E} \hspace{1mm}
\mu_{0}(dz)={\big (} A^{1+\delta}l_{1}, l_{2}{\big )}_{E}, \quad l_{1},l_{2}\in E.
\label{Gauss}
\end{equation}
We can read (\ref{Gauss}) as
%It follows from (\ref{Gauss}) that
%we cac characterize $\mu_{0}$ as the unique mean zero Gaussian probability 
%measure supported on
%$E$ with (\ref{Gauss}) can be read as 
\begin{equation}
\int_{E} \langle z, l_{1} \rangle \langle z, l_{2} \rangle 
\hspace{1mm}
\mu_{0}(dz) 
%= {\big (}l_{1},l_{2} {\big )}_{H^{-1}}
={\big (} (1-\Delta)^{-1}l_{1}, l_{2}{\big )}_{H},
%={\big (} C^{1+\delta}l_{1}, l_{2}{\big )}_{H^{-\delta}({\mathbb T}^{2})}
%=
%{\big (} Cl_{1}, l_{2}{\big )}_{L^{2}({\mathbb T}^{2})},
\quad l_{1},l_{2}\in E^{*} \subset H,
\label{Gauss2}
\end{equation}
%H^{\delta}(\mathbb T^{2}) \subset L^{2}({\mathbb T}^{2}),$$
where $\langle \cdot, \cdot \rangle$ denotes the dualization between 
$E$ and $E^{*}=H^{\delta}({\mathbb T}^{2})$.
%$H^{\delta}(\mathbb T^{2})$ and $H^{-\delta}(\mathbb T^{2})$.
%
%The existence of the measure $\mu_{0}$ follows from the Gross-Minlos-Sazanov theorem. See ??.
%%%%%%%%%%%%%%%%%%%%%%%%%%%%%%%%%%%%%%%%%%

Next, we introduce the {\it{${\rm exp}(\phi)_{2}$-measure}} 
$\mu_{\sf exp}^{(a)}$ and the {\it{${\rm cos}(\phi)_{2}$-measure}}
$\mu_{\sf cos}^{(a)}$ on $E$.
%where $a\in (-{\sqrt{4\pi}},{\sqrt{4\pi}})$ is a charge parameter.
The details of
the rigorous construction of these probability measures 
%Wick exponential 
as well as various properties of them will be 
presented in Section 3. 
%Let $:\hspace{-1mm}z^{n} \hspace{-1mm}:\hspace{-1mm}(g)$
%=\int_{\mathbb T^{2}} :\hspace{-1mm}z^{n} (x) \hspace{-1mm}:g(x) dx$
%$:\hspace{-1mm}z^{n} \hspace{-1mm}:(g)$
%($n=0,1,2,\ldots$)
%\in {\mathbb N}\cup \{0\}$ 
%be the Wick power with respect to
%$\mu_{0}$ for a cut-off function $g\in H$ 
%and 
For $\Lambda \subset \mathbb R^{2}$, we write $c\Lambda:=\{cx; x\in \Lambda \}$ ($c\in \mathbb R$).
Throughout the present paper, we assume
\begin{equation}
\Lambda \mbox{ is compact such that }
\Lambda=-\Lambda 
%\quad \mbox
%:=\{ -x; x\in \Lambda \}$ 
\mbox{ and $0$ is an interior point of } \Lambda.
\label{Lambda-condition}
\end{equation}
As typical examples of $\Lambda$, we may consider
$D(N)=\{ x \in {\mathbb R}^{2}; \vert x \vert_{\mathbb R^{2}} \leq N \}$ and
$Q(N)=\{ x=(x_{1}, x_{2}) \in {\mathbb R}^{2}; \vert x_{1} \vert \leq N,  
\vert x_{2} \vert \leq N \}$ ($N\in \mathbb N$).
We define the projection operator
$\Pi_{\Lambda}$ on $E$ by
\begin{equation}
(\Pi_{\Lambda}z)(x):=\sum_{k\in {\mathbb Z}^{2}} {\bf 1}_{\Lambda}(k) 
%\langle z, e_{k} \rangle e_{k}=
%\sum_{\vert k \vert \leq N}
 {\hat z}(k) {\bf e}_{k}(x)
=  \sum_{k\in \Lambda \cap {\mathbb Z}^{2}} \langle z, e_{k} \rangle e_{k}(x), 
\quad z\in E, x\in {\mathbb T}^{2},
\label{lambda-projection}
\end{equation}
where ${\bf 1}_{\Lambda}$ stands for the indicator function of $\Lambda$.
For a cut-off function $g\in H$ and the charge parameter 
$a \in (-{\sqrt{4\pi}}, {\sqrt{4\pi}})$,
we define 
the {\it{Wick exponential}} 
$:\hspace{-0.5mm}{\rm{exp}}(a z)\hspace{-0.5mm}:\hspace{-0.5mm}(g)$
%the {\it{Wick cosine}}
%$:\hspace{-0.5mm}{\rm{cos}}(a z)\hspace{-0.5mm}:( g )$ and
%the {\it{Wick sine}}
%$:\hspace{-0.5mm}{\rm{sin}}(a z)\hspace{-0.5mm}:( g )$ 
with respect to 
$\mu_{0}$
by
\begin{align} 
:\hspace{-0.8mm}{\rm{exp}}(a z)\hspace{-0.8mm}:\hspace{-0.5mm}(g) &=
\lim_{N \to \infty}
\int_{\mathbb T^{2}} \exp \Big( a (\Pi_{N\Lambda}z)(x)-\frac{a^{2}}{2}
{\mathbb E}^{\mu_{0}}[ 
(\Pi_{N\Lambda}z)(x)^{2}] \Big) \hspace{0.5mm} g(x)dx, 
%:=\sum_{n=0}^{\infty} \frac{a^{n}}{n!} :\hspace{-1mm}z^{n} \hspace{-1mm}:(g), 
%\quad 
%\mbox{ in }L^{2}(\mu_{0})
%, \quad 
%z\in E,
\label{exp-intro}
\end{align}
where the right-hand side of (\ref{exp-intro})
converges in $L^{2}(\mu_{0})$ and it does not depend on the choice of $\Lambda$
(see Theorem \ref{Wick-Exp} for details). 
We note that 
the Gaussian multiplicative chaos ${\mathcal M}_{z}^{(a)}(A)$ ($A\in {\mathcal B}({\mathbb T}^{2})$)
coincides with
%$${\mathcal M}_{z}^{(a)}(A)=
$:\hspace{-0.8mm}{\rm{exp}}(a z)\hspace{-0.8mm}:\hspace{-0.5mm}({\bf 1}_{A})$.
%A\in  {\mathcal B}({\mathbb T}^{2}), \quad \mu_{0}\mbox{-a.s.},
%$$
Using the Wick exponential
$:\hspace{-0.8mm}{\rm{exp}}(a z)\hspace{-0.8mm}:\hspace{-0.5mm}(g)$, we can also introduce
the {\it{Wick cosine}}
$:\hspace{-0.8mm}{\rm{cos}}(a z)\hspace{-0.8mm}:\hspace{-0.5mm}(g)$ and
the {\it{Wick sine}}
$:\hspace{-0.8mm}{\rm{sin}}(a z)\hspace{-0.8mm}:\hspace{-0.5mm}(g)$ 
by
\begin{align}
:\hspace{-0.8mm}{\rm{cos}}(a z)\hspace{-0.8mm}:\hspace{-0.5mm}(g) &=
\frac{1}{2} \Big (
%\hspace{-0.8mm}
:\hspace{-0.8mm} \exp ({\sqrt{-1}}a\phi)\hspace{-0.8mm}: \hspace{-0.5mm}(g)
\hspace{0.8mm}
+
:\hspace{-0.8mm} \exp (-{\sqrt{-1}}a\phi) \hspace{-0.8mm}: \hspace{-0.5mm} (g) \Big )
\nonumber \\
&=
\lim_{N \to \infty}
\int_{\mathbb T^{2}} \cos \big( a (\Pi_{N\Lambda}z)(x) \big) \exp \Big( \frac{a^{2}}{2}
{\mathbb E}^{\mu_{0}}[ 
(\Pi_{N\Lambda}z)(x)^{2}] \Big) \hspace{0.5mm} g(x)dx, 
%\label{cos-intro}
\nonumber
\\
:\hspace{-0.8mm}{\rm{sin}}(a z)\hspace{-0.8mm}:\hspace{-0.5mm}(g) &=
\frac{1}{2} \Big (
\hspace{-0.8mm}:\hspace{-0.8mm} \exp ({\sqrt{-1}}a\phi)\hspace{-0.8mm}: \hspace{-0.5mm}(g)
\hspace{0.8mm}-
:\hspace{-0.8mm} \exp (-{\sqrt{-1}}a\phi) \hspace{-0.8mm}: \hspace{-0.5mm} (g)\Big )
\nonumber \\
&=
\lim_{N \to \infty}
\int_{\mathbb T^{2}} \sin \big( a (\Pi_{N\Lambda}z)(x) \big) \exp \Big( \frac{a^{2}}{2}
{\mathbb E}^{\mu_{0}}[ 
(\Pi_{N\Lambda}z)(x)^{2}] \Big) \hspace{0.5mm} g(x)dx,
%\label{sin-intro}
\nonumber
\end{align}
respectively.
%It is worth mentioning that
%%%%%%%%%%%%%%%%%%%%%%%%%%%%%%%%%%%%%%%%%%%%%%%%%%%%%%%%%%%%%%%%%%%%%
%%%%%%%%%%%%%%%%%%%%%%%%%%%%%%%%%%%%%%%%%%%%%%%%%%%%%%%%%%%%%%%%%%%%
We then define the ${\rm exp}(\phi)_{2}$-measure $\mu_{\sf exp}^{(a)}$ 
on $E$ by
\begin{equation}
\mu_{\sf exp}^{(a)}(dz):=\frac{1}{Z_{\sf exp}^{(a)}}
\exp \Big(-
\hspace{-0.8mm}
:\hspace{-0.8mm}{\rm{exp}}(az)\hspace{-0.8mm}:
\hspace{-0.5mm}
({\bf 1}_{\mathbb T^{2}})  
%{\mathcal M}_{z}^{(a)}(\mathbb T^{2})
\Big) \hspace{0.5mm} \mu_{0}(dz),
\label{mu-def}
\end{equation}
where 
%${\bf 1}_{\mathbb T^{2}}$ denotes the indicator function of ${\mathbb T}^{2}$ and 
\begin{equation}
Z_{\sf exp}^{(a)}:=\int_{E} \exp 
\Big(- 
%{\mathcal M}_{z}^{(a)}(\mathbb T^{2})
\hspace{-0.8mm}:\hspace{-0.8mm}{\rm{exp}}(az)\hspace{-0.8mm}:\hspace{-0.5mm}( {\bf 1}_{\mathbb T^{2}} ) 
\Big) \hspace{0.5mm} \mu_{0}(dz) 
\label{Z-exp}
\end{equation}
is the normalization constant. 
%(By noting that the function 
%$\exp \big(-
%
%\hspace{-0.8mm}
%:
%\hspace{-0.8mm}{\rm{exp}}(a\cdot)\hspace{-0.8mm}:
%\hspace{-0.8mm}
%( {\bf 1}_{\mathbb T^{2}} ) 
%
%{\mathcal M}_{z}^{(a)}(\mathbb T^{2})
%\big )$ belongs to $L^{\infty}(\mu_{0})$ and 
%
%since
%$\langle :\hspace{-0.5mm}{\rm{exp}}(az)\hspace{-0.5mm}:, {\bf 1}_{\mathbb T^{2}} \rangle \geq 0$
%for $\mu_{0}$-a.e. $z\in E$ and 
%%%%%%%%%
%%%%%%%%%
%$Z_{a}>0$, 
%as we shall see in Proposition \ref{Z-positive} below, the right-hand sides of (\ref{mu-def}) and (\ref{Z}) make sense.)
Replacing
$:\hspace{-0.8mm}{\rm{exp}}(az)\hspace{-0.8mm}:\hspace{-0.5mm}( {\bf 1}_{\mathbb T^{2}} )$ by
$:\hspace{-0.8mm}{\rm{cos}}(az)\hspace{-0.8mm}:\hspace{-0.5mm}( {\bf 1}_{\mathbb T^{2}} )$ 
in (\ref{mu-def}) and (\ref{Z-exp}),
we may also define the $\cos (\phi)_{2}$-measure $\mu_{\sf cos}^{(a)}$.
%%%%%%%%%%%%%%%%%%%%%%%%%%%%%%%%%%%%%%%%%%%%%%%%%%%%%%%%%%%%%%%%%%%%%%%%%%%%%%%%%%%%%%%%%%%%%%%%%%%%%%

Now we are in a position to introduce a
pre-Dirichlet form $({\mathcal E},{\mathfrak F}C_{b}^{\infty})$. We put $\mu=\mu_{\sf exp}^{(a)}, \mu_{\sf cos}^{(a)}$ 
and set $K:={\rm Span} \{e_{k}; k\in {\mathbb Z}^{2} \}$. We mention that
$K \subset E^{*}$ is a dense linear subspace of $E$.
Let ${\mathfrak F}C_{b}^{\infty}:={\mathfrak  F}C_{b}^{\infty}(K)$ be the space of all smooth cylinder functions
on $E$ having the form
%We say a function $F:E\longrightarrow {\bf R}$
%is in a class ${\cal FC}_{b}^{\infty}$ if there exist 
$$
F(z)=f( \langle z, \varphi_{1} \rangle , \ldots , \langle z, \varphi_{n} \rangle), \quad z\in E,
$$
with $n\in {\mathbb N}$, $f\in C^{\infty}_{b}({\mathbb R}^{n}, {\mathbb R})$ 
and $\{ \varphi_{1}, \ldots , \varphi_{n} \} \subset K$.
Since we have supp$(\mu)=E$, two different functions in
${\mathfrak F}C_{b}^{\infty}$ represent two different $\mu$-classes.
%%%%%%%%%%%%%%%%%
Note that ${\mathfrak F}C_{b}^{\infty}$ is dense in $L^{p}(\mu)$ for all $p\geq 1$.
%
%%%%%%%%%%%%%%%%%%%%%%%%%%%%%%%%%%%%%%%
If $F: E \to \mathbb R$ is differentiable along every $h \in H$, and the mapping
$h \mapsto ({\partial F}/{\partial h})(z)$ is a bounded linear functional on $H$,
the function $F$ is called $H$-{\it{G\^ateaux differentiable}} at $z\in E$. Such a mapping
is called the $H$-{\it{G\^ateaux derivative}} of $F$ at $z\in E$, and denoted by $D_{H}F(z)$.
%%%%%%%%%%%%%%%%%%%%%%%%%%%%%%%%%%%%%%%
For a cylinder function $F\in {\mathfrak F}C^{\infty}_{b}$ having the above form,
%such a form,
%$F=f(\langle \cdot, \varphi_{1} \rangle , \ldots , \langle \cdot, \varphi_{n} \rangle)\in {\cal FC}^{\infty}_{b}$,
the $H$-G\^ateaux derivative $D_{H}F: E \to H$
is given by
%and the $H$-Fr\'echet derivative $D_{H}G$ are given by
$$
D_{H}F(z)=\sum_{j=1}^{n} \partial_{j}f
\big (
\langle z, \varphi_{1} \rangle , \ldots , \langle z, \varphi_{n} \rangle
\big ) \varphi_{j}, \quad z\in E,
$$
where $\partial_{j}$ stands for the $j$-th partial derivative. 
For $\varphi \in K$, we also define a function $b(\mu; \varphi): E \to \mathbb R$ by
\begin{equation*}
b(\mu; \varphi)(z)
:=
\begin{cases}
\displaystyle{
a
:\hspace{-0.5mm}{\rm{exp}}(az)\hspace{-0.5mm}:(\varphi) 
},
& \text{ $\mu=\mu_{\sf exp}^{(a)}$},
\\ 
\displaystyle{
-a
:\hspace{-0.5mm}{\rm{sin}}(az)\hspace{-0.5mm}:(\varphi) 
},
& \text{ $\mu=\mu_{\sf cos}^{(a)}$}.
\end{cases}
\label{CONS}
\end{equation*}

We consider
the pre-Dirichlet form $({\mathcal E},{\mathfrak F}C_{b}^{\infty})$ 
which is given by 
$$
{\mathcal E}(F,G)=
\frac{1}{2} 
\int_{E} \big( A^{\gamma} D_{H}F(z), D_{H}G(z) \big)_{H} \hspace{0.5mm}
\mu (dz), \quad F,G\in {\mathfrak F}C_{b}^{\infty}.
$$
Then we obtain
%%%%%%%%%%%%%%%%%%%%%%%%%%%%%%%%%%%%%%%%%%%%%%%%%%%%%%%%%%%%%
\begin{pr} \label{IbP}
\begin{equation}
{\mathcal E}(F,G)=-\int_{E} {\mathcal L}F(z) G(z) \mu(dz),
\quad F,G\in {\mathfrak F}C_{b}^{\infty},
\label{IbP3}
\nonumber
\end{equation}
where ${\mathcal L}F \in L^{p}(\mu)$, $1\leq p<1+\frac{4\pi}{a^{2}}$
is given by
%For $F=f(\varphi_{1}^{*}, \ldots , \varphi_{n}^{*})\in {\cal FC}^{\infty}_{b}$, we set
%%%%%%%%%%%%%%%%%%%%%%%%%%%%%%%%%%%%%
\begin{align}
%%%%%%%%%%%%%%
%\hspace{-5mm}
%%%%%%%%%%%%%%
{\mathcal L}F(z) =
%\frac{1}{2} {\rm Tr}(D_{(\alpha)}^{2}F(z))
%+\frac{1}{2}\big \langle w, \Delta_{x}D_{H}F(w(\cdot)) \big \rangle
%-\frac{1}{2}\big \langle ({\widetilde {\nabla}} U)(w(\cdot)), D_{H}F(w) 
%\big \rangle
%\nonumber \\
%%%%%%%%%%%%%
%&=&
%%%%%%%%%%%%
\frac{1}{2}
& 
\sum_{i,j=1}^{n}
%(\varphi^{*}_{i}, \varphi^{*}_{j})_{H^{\alpha}(\mathbb T^{2})}
\partial_{i} \partial_{j} f
\big(
\langle z,
\varphi_{1} \rangle, \ldots , \langle z, \varphi_{n} \rangle
\big ) 
\cdot
(A^{\gamma} \varphi_{i}, \varphi_{j})_{H}
\nonumber \\
%&\mbox{ }&
%\hspace{-10mm}
&
-\frac{1}{2} 
\sum_{i=1}^{n}
\partial_{i} f
\big (
\langle z,
\varphi_{1} \rangle, \ldots , \langle z, \varphi_{n} \rangle
\big )
\cdot
\big\{\langle z, A^{\gamma-1}\varphi_{j} \rangle +
b(\mu; A^{\gamma}{\varphi}_{j})(z) 
\big\}, \quad z\in E
\nonumber 
%\\
%\label{generator}
\end{align}
for $F=f( \langle \cdot, \varphi_{1} \rangle, \ldots, \langle \cdot, \varphi_{n} \rangle) \in {\mathfrak F}C^{\infty}_{b}$.
\end{pr}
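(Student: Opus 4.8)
\emph{Proof strategy.} The plan is to derive the formula by a two-step integration by parts: first the Cameron--Martin formula for the Gaussian measure $\mu_{0}$, then a correction accounting for the Radon--Nikodym density $d\mu/d\mu_{0}$. Throughout, set $h_{j}:=A^{\gamma}\varphi_{j}$; since $\varphi_{j}\in K$, each $h_{j}$ is a finite linear combination of eigenfunctions of $\Delta$, hence smooth, and lies in $K\subset E^{*}$. For $F=f(\langle\cdot,\varphi_{1}\rangle,\dots,\langle\cdot,\varphi_{n}\rangle)\in\mathfrak{F}C_{b}^{\infty}$ we have $D_{H}F(z)=\sum_{j}\partial_{j}f(\dots)\varphi_{j}$, and since $(A^{\gamma}\varphi_{j},D_{H}G(z))_{H}$ equals the derivative of $G$ along $h_{j}$,
\[
\mathcal{E}(F,G)=\frac{1}{2}\sum_{j=1}^{n}\int_{E}\partial_{j}f\big(\langle z,\varphi_{1}\rangle,\dots,\langle z,\varphi_{n}\rangle\big)\,\partial_{h_{j}}G(z)\,\mu(dz).
\]
So it suffices to integrate by parts each summand in the direction $h_{j}$.

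The core is the identity: for every $\varphi\in K$ and every bounded cylinder function $\Phi$ on $E$ with bounded $H$-derivative,
\[
\int_{E}\partial_{A^{\gamma}\varphi}\Phi\,d\mu=\int_{E}\Phi\cdot\Big(\langle z,A^{\gamma-1}\varphi\rangle+b(\mu;A^{\gamma}\varphi)\Big)\,d\mu .
\]
To prove it I would write $\mu=(Z^{(a)})^{-1}e^{-V}\mu_{0}$, with $V:=\,:\!\exp(az)\!:(\mathbf{1}_{\mathbb{T}^{2}})$ in the exponential case and $V:=\,:\!\cos(az)\!:(\mathbf{1}_{\mathbb{T}^{2}})$ in the trigonometric case, and approximate $V$ by the smooth cylinder functionals $V_{N}(z):=\int_{\mathbb{T}^{2}}\exp\big(a(\Pi_{N\Lambda}z)(x)-\frac{a^{2}}{2}\mathbb{E}^{\mu_{0}}[(\Pi_{N\Lambda}z)(x)^{2}]\big)\,dx$ (resp.\ the cosine analogue), which depend only on the finitely many coordinates $\langle z,e_{k}\rangle$, $k\in N\Lambda\cap\mathbb{Z}^{2}$. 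For these the classical finite-dimensional Gaussian integration-by-parts formula $\int_{E}\partial_{A^{\gamma}\varphi}\Psi\,d\mu_{0}=\int_{E}\Psi\,\langle z,A^{\gamma-1}\varphi\rangle\,d\mu_{0}$ --- valid because $A^{\gamma}\varphi=A(A^{\gamma-1}\varphi)$ lies in the Cameron--Martin space $H^{1}(\mathbb{T}^{2})$ of $\mu_{0}$ and $A^{\gamma-1}\varphi$ is smooth --- applied to $\Psi=\Phi\,e^{-V_{N}}$, together with $\partial_{A^{\gamma}\varphi}e^{-V_{N}}=-(\partial_{A^{\gamma}\varphi}V_{N})e^{-V_{N}}$ and $\partial_{A^{\gamma}\varphi}V_{N}(z)=a\int_{\mathbb{T}^{2}}(\Pi_{N\Lambda}A^{\gamma}\varphi)(x)\exp\big(a(\Pi_{N\Lambda}z)(x)-\frac{a^{2}}{2}\mathbb{E}^{\mu_{0}}[(\Pi_{N\Lambda}z)(x)^{2}]\big)\,dx$, gives the asserted identity with $(\mu_{0},V_{N},\partial_{A^{\gamma}\varphi}V_{N})$ in place of $(\mu,V,b(\mu;A^{\gamma}\varphi))$. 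I would then let $N\to\infty$: by Theorem \ref{Wick-Exp} and Corollary \ref{Wick-Tri}, $V_{N}\to V$ and $\partial_{A^{\gamma}\varphi}V_{N}\to a:\!\exp(az)\!:(A^{\gamma}\varphi)=b(\mu_{\sf exp}^{(a)};A^{\gamma}\varphi)$, resp.\ $\to-a:\!\sin(az)\!:(A^{\gamma}\varphi)=b(\mu_{\sf cos}^{(a)};A^{\gamma}\varphi)$, in $L^{2}(\mu_{0})$; combined with the $L^{q}(\mu_{0})$-estimates on the Wick exponential/trigonometric functionals from Section 3 (in the exponential case one may simply use $0\le e^{-V_{N}}\le 1$), this lets me pass to the limit under the integral sign and, after dividing by $Z^{(a)}$, obtain the displayed identity.

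The conclusion is then routine. Applying that identity with $\varphi=\varphi_{j}$ and $\Phi=\partial_{j}f(\langle\cdot,\varphi_{1}\rangle,\dots)\,G$, which is bounded with bounded $H$-derivative, and using the product and chain rules
\[
\partial_{h_{j}}\big(\partial_{j}f(\dots)\,G\big)=\Big(\sum_{i=1}^{n}\partial_{i}\partial_{j}f(\dots)(A^{\gamma}\varphi_{j},\varphi_{i})_{H}\Big)G+\partial_{j}f(\dots)\,\partial_{h_{j}}G ,
\]
summing over $j$ and multiplying by $\frac{1}{2}$ yields $\mathcal{E}(F,G)=-\int_{E}\mathcal{L}F\,G\,d\mu$ with $\mathcal{L}F$ of the asserted form (the stated linear term carrying $\varphi_{i}$ in place of the misprinted $\varphi_{j}$, and $(A^{\gamma}\varphi_{j},\varphi_{i})_{H}=(A^{\gamma}\varphi_{i},\varphi_{j})_{H}$ by self-adjointness). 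The membership $\mathcal{L}F\in L^{p}(\mu)$ for $1\le p<1+\frac{4\pi}{a^{2}}$ then follows because the second-order term is bounded, $\langle\cdot,A^{\gamma-1}\varphi_{j}\rangle\in L^{p}(\mu)$ for every finite $p$ (Gaussian integrability together with $\mu\ll\mu_{0}$), and the required $L^{p}(\mu)$-bounds on $b(\mu;\cdot)$ are among the estimates of Section 3.

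The main obstacle will be the passage to the limit $N\to\infty$ in the finite-dimensional integration-by-parts identity. Concretely, one must verify that $z\mapsto e^{-V(z)}$ is $H$-G\^ateaux differentiable along each $h_{j}$ for $\mu_{0}$-a.e.\ $z$ with the expected derivative, and --- more delicately --- that $\Phi\,e^{-V}$ lies in the domain of the Gaussian integration-by-parts formula, i.e.\ $\int_{E}|\Phi|\,|\partial_{h_{j}}V|\,e^{-V}\,d\mu_{0}<\infty$ and $\int_{E}|\Phi|\,|\langle z,A^{\gamma-1}\varphi_{j}\rangle|\,e^{-V}\,d\mu_{0}<\infty$, with uniform-in-$N$ analogues for $V_{N}$. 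Both requirements reduce to uniform integrability of the cut-off Wick exponential/trigonometric functionals and of their directional derivatives $:\!\exp(a\Pi_{N\Lambda}z)\!:(h)$, which is exactly what Proposition \ref{Def-Wick}, Theorem \ref{Wick-Exp} and Corollary \ref{Wick-Tri} supply; the trigonometric (Sine-Gordon) case, where $V$ is not sign-definite and $e^{-V}$ is genuinely unbounded, is the harder of the two.
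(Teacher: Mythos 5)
Your proposal is correct and follows essentially the same route as the paper: finite-dimensional Gaussian integration by parts against the cut-off densities $Z_N^{-1}e^{-V_N}\mu_0$, explicit computation of $\partial_{A^{\gamma}\varphi}V_N$, and passage to the limit $N\to\infty$ via the $L^2(\mu_0)$-convergence of the Wick exponential/trigonometric functionals (Theorem \ref{Wick-Exp}, Corollary \ref{Wick-Tri}) together with the uniform exponential integrability of Proposition \ref{Zcos-positive} in the Sine-Gordon case (the paper extracts an a.s.-convergent subsequence where you invoke uniform integrability, a cosmetic difference). You also correctly identify and repair the index misprint $\varphi_j$ versus $\varphi_i$ in the first-order term.
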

%%%%%%%%%%%%%%%%%%%%%%%%%%%%%%%%%%%%
\begin{re}
Since $A^{\gamma}\varphi \in K$ for any
$\gamma \in \mathbb R$ and $\varphi \in K$,
we easily see that
${\mathfrak F}C_{b}^{\infty}$ 
%is equal to 
coincides with the space of all cylinder functions on $E$ having the form
\begin{equation}
F(z)=f( \varphi_{1}^{*}(z), \ldots , \varphi_{n}^{*}(z)), \quad z\in E,
\label{another}
\end{equation}
with $n\in {\mathbb N}$, $f\in C^{\infty}_{b}({\mathbb R}^{n}, {\mathbb R})$ 
and $\{ \varphi_{1}, \ldots , \varphi_{n} \} \subset K$.
Here ${\varphi}_{j}^{*}$ denotes the unique continuous extension of the functional
$(\varphi_{j},\cdot)_{\mathcal H}$ to $E$, that is, ${\varphi}^{*}_{j}$ is given by
$ {\varphi}^{*}_{j}(z)=
%\langle (1-\Delta)^{\alpha}{\varphi}_{j},z \rangle =
%\lambda_{k}^{\alpha} 
\langle A^{-\gamma}{\varphi}_{j}, z \rangle$ ($z\in E$). 
For a cylinder function $F \in {\mathfrak F}C_{b}^{\infty}$ having the above form,
the map
$D_{\mathcal H}F:E\to {\mathcal H}$ is defined by
%%%%%%%%%%%%%%%%%%%%%%%%%%%%%%%%%%%%%%%%%%%%%%%%%%%%%%%%%%%%%%%%%%%%%%%%%
$$
D_{\mathcal H}F(z):=\sum_{j=1}^{n} \partial_{j} f
\big (
\varphi_{1}^{*}(z), \ldots , \varphi_{n}^{*}(z)
\big )
\varphi_{j}, \quad z\in E.
$$
By the chain rule, it coincides with the usual $\mathcal H$-G\^ateaux derivative of $F$,
hence the above definition is independent of the representation of $F$ in (\ref{another}).
On the other hand, since 
the ${H}$-G\^ateaux derivative $D_{H}F$ is also given by
$$
D_{H}F(z)=\sum_{j=1}^{n} \partial_{j} f
\big (
\varphi_{1}^{*}(z), \ldots , \varphi_{n}^{*}(z)
\big )
\cdot
A^{-\gamma}\varphi_{j}, \quad z\in E,
$$
we easily see that 
the pre-Dirichlet form $({\mathcal E},{\mathfrak F}C_{b}^{\infty})$ and its associated 
pre-Dirichlet operator ${\mathcal L}$ are also represented as 
%also has 
%the following representation:
$$
{\mathcal E}(F,G)=
\frac{1}{2} 
\int_{E} \big( D_{\mathcal H}F(z), D_{\mathcal H}G(z) \big)_{\mathcal H} \hspace{0.5mm}
\mu (dz), \quad F,G\in {\mathfrak F}C_{b}^{\infty},
$$
%%%%%%%%%%%%%%%%%%%%%%%%%%%%%%%%%%%%%%%%%%%%%%%%%%%%%%%%%%%%%%%%%%%%%%%%%
and 
\begin{align}
{\mathcal L}F(z) 
%&=&
%\frac{1}{2} {\rm Tr}(D_{\mathcal H}^{2}F(z))
%-\frac{1}{2}\big \langle z, A^{-1}D_{\mathcal H}F(z) \big \rangle
%-\frac{\alpha}{2}\big \langle 
%\hspace{-0.8mm}:\hspace{-0.8mm}{\rm{exp}}(az)\hspace{-0.8mm}: ,
%D_{\mathcal H}F(z) 
%\big \rangle
%\nonumber \\
%%%%%%%%%%%%%
=
%%%%%%%%%%%%
\frac{1}{2} 
&
\sum_{i,j=1}^{n}
%(\varphi^{*}_{i}, \varphi^{*}_{j})_{H^{\alpha}(\mathbb T^{2})}
\partial_{i} \partial_{j} f
\big(
\varphi_{1}^{*}(z), \ldots , \varphi_{n}^{*}(z)
\big ) 
(\varphi_{i}, \varphi_{j})_{\mathcal H}
\nonumber \\
&
-\frac{1}{2} 
\sum_{i=1}^{n}
\partial_{i} f
\big (
\varphi_{1}^{*}(z), \ldots , \varphi_{n}^{*}(z)
\big )
\cdot
\big\{\langle z, A^{-1}\varphi_{j} \rangle +
b(\mu; {\varphi}_{j})(z) 
 \big\}, 
\quad z\in E,
\nonumber 
\end{align}
%\label{generator}
respectively.
\end{re}
%%%%%%%%%%%%%%%%%%%%%%%%%%%%%%%%%%%%%%%%%%%%%%%%%%%%%%%%%%%%%%%%%%%%%%%%%%%%%%%%%%%%%%%%%%%%%%%%%%
%%%%%%%%%%%%%%%%%%%%%%%%%%%%%%%%%%%%%
\begin{re} Since $\inf_{\vert a \vert < {\sqrt{4\pi}} } \big(1+\frac{4\pi}{a^{2}} \big)=2$, we have
${\mathcal L}F\in L^{2}(\mu)$ for any 
%charge parameter $a$ satisfying
$\vert a \vert <{\sqrt{4\pi}}$.
\end{re}
%%%%%%%%%%%%%%%%%%%%%%%%%%%%%%%%%%%%%%

Proposition \ref{IbP} means that the operator 
(${\mathcal L}, {\mathfrak F}C_{b}^{\infty})$ is the pre-Dirichlet operator
which is associated with the pre-Dirichlet form $({\mathcal E},{\mathfrak F}C_{b}^{\infty})$.
In particular, it implies that $({\mathcal E},{\mathfrak F}C_{b}^{\infty})$ is closable in $L^{2}(\mu)$.
We denote by ${\mathcal D}({\mathcal E})$ the completion of ${\mathfrak F}C_{b}^{\infty}$
with respect to the ${\mathcal E}_{1}^{1/2}$-norm. 
(Here we use standard notations of the 
theory of Dirichlet forms, see, e.g., \cite{A, FOT, MR}.)
Then by standard theory (cf. \cite{A, AR90, FOT, MR}),
$({\mathcal E}, {\mathcal D}({\mathcal E}))$ is a Dirichlet form and 
the operator ${\mathcal L}$ has a self-adjoint extension
$({\mathcal L}_{\mu}, {\rm Dom}({\mathcal L}_{\mu}))$, called the
{\it{Friedrichs extension}}, corresponding to the 
Dirichlet form $({\mathcal E}, {\mathcal D}({\mathcal E}))$.
%%%%%%
The semigroup $\{ e^{t{{\mathcal L}_{\mu}}} \}_{t\geq 0}$ generated by 
$({\mathcal L}_{\mu}, {\rm Dom}({\mathcal L}_{\mu}))$ in $L^{2}(\mu)$ is Markovian, i.e.,
$0\leq e^{t{{\mathcal L}_{\mu}}}F \leq 1$, $\mu$-a.e. whenever
$0\leq F \leq 1$, $\mu$-a.e. Moreover, since 
$\{ e^{t{{\mathcal L}_{\mu}}} \}_{t\geq 0}$ is symmetric on $L^{2}(\mu)$, the Markovian
property implies that
$$ \int_{E} e^{t{{\mathcal L}_{\mu}}} F(z) \mu(dz) \leq \int_{E} F(z) \mu(dz), 
\quad F\in L^{2}(\mu),~F \geq 0,~ \mu \mbox{-a.e.}
$$
%where $+$ stands for non-negative functions. 
Hence $\Vert e^{t{{\mathcal L}_{\mu}}}F \Vert_{L^{1}(\mu)}
\leq \Vert F \Vert_{L^{1}(\mu)}$ holds for $F\in L^{2}(\mu)$, and 
%%%%%%%%%%%%%%%%%%%%%%%%%%%%%%%%%%%%%%%%%%%%%%%%%%%%%%%%%%%%%%%%%%%%%%%%%%
%%%%%%%%%%%%%%%%%%%%%%%%%%%%%%%%%%%%%%%%%%%%%%%%%%%%%%%%%%%%%%%%%%%%%%%%%%%
$\{e^{t{{\mathcal L}_{\mu}}} \}_{t\geq 0}$
can be extended as a family of $C_{0}$-semigroup
of contractions in $L^{p}(\mu)$ for all $p\geq 1$. 
%by applying 
%the Riesz--Thorin interpolation theorem. 
See e.g., \cite[Theorem X.55]{rs} for details.

%%%%%%%%%%%%%%%%%%%%%%%%%%%%%%%%%%%%%%%%%%%%%%%%%%%%
%%%%%%%%%%%%%%%%%%%%%%%%%%%%%%%%%%%%%%%%%%%%

It is a fundamental question whether the
Friedrichs extension is the only closed extension of
$({\mathcal L}, {\mathfrak F}C^{\infty}_{b})$
generating a $C_{0}$-semigroup on $L^{p}(\mu), p\geq 1$, i.e.,
whether we have $L^{p}(\mu)$-uniqueness for $({\mathcal L}, {\mathfrak F}C^{\infty}_{b})$.
For
$p=2$, this is equivalent to the fundamental problem of essential self-adjointness of 
${\mathcal L}$ in quantum physics (cf. \cite{rs}). 
%(cf. Eberle \cite{Eb}). 
Even if $p=2$,
in general there are many 
%bounded below
lower semi bounded 
self-adjoint
extensions ${\widetilde {\mathcal L}}$ of ${\mathcal L}$ in 
$L^{2}(\mu)$ which therefore generate different symmetric strongly continuous
semigroups
$\{ e^{t{\widetilde {\mathcal L}}} \}_{t\geq 0}$
in $L^{2}(\mu)$.
%as above.
If, however, we have $L^{p}(\mu)$-uniqueness of ${\mathcal L}$ for some
$p\geq 2$, there is hence only one semigroup which is strongly continuous 
and with generator extending ${\mathcal L}$ in $L^{p}(\mu)$.
Consequently, in this case, only one such $L^{p}$-,
hence only one such
$L^{2}$-dynamics exists, associated with 
%${\rm{exp}}(\phi)_{2}$-measure 
$\mu$.
%exp$(\phi)_{1}$-quantum field $\mu$. 

The following theorems are the main results of the present paper. 
For the notions of ``quasi-everywhere" and ``capacity", we refer to \cite{A, FOT, MR}.
%%%%%%%%%%%%%%%%%%%%%%%%%%%%%%%%%%
\begin{tm}
\label{ES}
Let $0<\gamma \leq 1$ be the regularization constant satisfying 
{\rm{(\ref{index-assume})}} and let 
$a\in \mathbb R$ be the charge constant 
satisfying 
%$a\in (-{\sqrt{4\pi \gamma}}, {\sqrt{4\pi \gamma}})$.
%be the charge constant.
$\vert a \vert <{\sqrt{4\pi \gamma}}$.
%we have the following:
%the following.
%{\rm{(1)}}~
Then
the pre-Dirichlet operator $({\mathcal L}, {\mathfrak F}C_{b}^{\infty})$ 
is $L^{p}(\mu)$-unique for all $1\leq p <\frac{1}{2}\big( 1+\frac{4\pi \gamma }{a^{2}} \big)$. Namely,
 there exists exactly one
$C_{0}$-semigroup in $L^{p}(\mu)$ such that its generator extends 
$({\mathcal L}, {\mathfrak F}C_{b}^{\infty})$. In particular, 
%for any charge constant $\vert a \vert <{\sqrt{4\pi \gamma}}$,
the Dirichlet form $({\mathcal E}, {\mathcal D}({\mathcal E}))$ is the unique
extension of $({\mathcal E}, {\mathfrak F}C^{\infty}_{b})$ such that 
${\mathfrak F}C^{\infty}_{b}$ is contained in the domain of
the associated generator.
\end{tm}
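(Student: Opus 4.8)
The plan is to follow the now-standard scheme for $L^p$-uniqueness of Dirichlet operators developed in \cite{LR} (see also \cite{ARZ}), adapting it to the Wick exponential/trigonometric drift by means of the estimates assembled in Section 3. Recall the basic criterion: if for some (hence all smaller) $p$ in the stated range one can show that the range $(1-{\mathcal L})({\mathfrak F}C^\infty_b)$ is dense in $L^{p'}(\mu)$ — equivalently, that the only $u\in L^{p}(\mu)$ with $(1-{\mathcal L}^*)u=0$ in the distributional sense against ${\mathfrak F}C^\infty_b$ is $u=0$ — then $({\mathcal L},{\mathfrak F}C^\infty_b)$ is $L^p(\mu)$-unique. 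The route of \cite{LR} realizes this via a finite-dimensional approximation: one introduces the conditional expectation $\mu_n := $ the image of $\mu$ under the projection $P_n$ onto the span of $e_k$, $\Vert k\Vert\le n$ (or $k\in (N\Lambda)\cap\mathbb Z^2$), writes down the regularized generator ${\mathcal L}_n$ on this finite-dimensional space, and shows (i) ${\mathcal L}_n F\to {\mathcal L}F$ in $L^p(\mu)$ for $F\in{\mathfrak F}C^\infty_b$, and (ii) the finite-dimensional operators ${\mathcal L}_n$ are themselves $L^p$-unique with uniform control, so the uniqueness passes to the limit. The key technical input making (i) and (ii) work here is precisely the convergence of the approximating Wick exponentials/trigonometric functionals in $L^q(\mu_0)$ — and, crucially, in $L^q(\mu)$ — together with the explicit integrability exponent: by the construction in Theorem \ref{Wick-Exp} and Corollary \ref{Wick-Tri}, $:\!\exp(az)\!:\!(\varphi)\in L^q(\mu_0)$ for a range of $q$ dictated by $|a|<\sqrt{4\pi}$, and after the Girsanov-type density one gains the factor $\gamma$, producing integrability up to the exponent $\tfrac12(1+\tfrac{4\pi\gamma}{a^2})$ — which is exactly the range of $p$ in the theorem and the reason for the hypothesis $|a|<\sqrt{4\pi\gamma}$ (so that this exponent exceeds $1$).

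Concretely I would proceed in the following steps. First, fix $p$ with $1\le p<\tfrac12(1+\tfrac{4\pi\gamma}{a^2})$ and pick $q>1$ with $pq<\tfrac12(1+\tfrac{4\pi\gamma}{a^2})$ still; this $q$ is the slack needed for the Hölder arguments below. Second, record the a priori bound: by Proposition \ref{IbP} and the integrability of the drift $b(\mu;\varphi)$, one has ${\mathcal L}F\in L^{pq}(\mu)$ for $F\in{\mathfrak F}C^\infty_b$, and the finite-dimensional truncations ${\mathcal L}_n F$ converge to ${\mathcal L}F$ in $L^{pq}(\mu)$; here one invokes the $L^{(\cdot)}(\mu_0)$-convergence of the Wick functionals from Section 3, upgraded to $\mu$ by the fact that $d\mu/d\mu_0$ and its inverse lie in suitable $L^r(\mu_0)$ (a consequence of the exponential integrability of $:\!\exp(az)\!:\!({\bf 1}_{\mathbb T^2})$, also from \cite{AHk} and Section 3). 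Third, establish $L^p$-uniqueness for each finite-dimensional ${\mathcal L}_n$: these are (elliptic, possibly degenerate through $A^\gamma$) second-order operators on $\mathbb R^{d_n}$ with drift given by a logarithmic derivative of a density that is smooth and strictly positive, for which $L^p$-uniqueness on cylinder functions is classical (e.g. via the argument in \cite{AKR12} or \cite{LR}, Section 2). Fourth, and this is the crux, pass to the limit: given $u\in L^{p'}(\mu)$ (with $1/p+1/p'=1$) annihilated by $(1-{\mathcal L})$ on ${\mathfrak F}C^\infty_b$, use the finite-dimensional uniqueness to show $P_n$-conditioned versions $u_n := E^\mu[u\mid{\mathcal F}_n]$ satisfy an approximate equation $(1-{\mathcal L}_n)$-orthogonality with an error tending to $0$ in $L^{p'}$, force $u_n\to 0$, and conclude $u=0$ by the martingale convergence $u_n\to u$.

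The main obstacle I anticipate is controlling, uniformly in $n$, the difference between the true drift $b(\mu;A^\gamma\varphi_j)$ and its $n$-th finite-dimensional surrogate $b(\mu_n;A^\gamma P_n\varphi_j)$ in the relevant $L^{pq}(\mu)$ norm — in other words, transferring the $L^q(\mu_0)$-convergence of Wick exponentials (which is quite delicate, relying on the Green's function estimates of the Appendix and the hypercontractive/Nelson-type bounds that give the exponent $1+4\pi/a^2$) into a statement that survives the reweighting by the non-Gaussian density $\mu$. The gain of $\gamma$ — morally because the carré-du-champ is $(A^\gamma D_H\cdot,D_H\cdot)_H$ rather than $(D_H\cdot,D_H\cdot)_H$, which improves the integrability exponent from $1+4\pi/a^2$ to $1+4\pi\gamma/a^2$ before halving — must be tracked carefully through this transfer; this is exactly where the restriction $|a|<\sqrt{4\pi\gamma}$ (rather than merely $|a|<\sqrt{4\pi}$) becomes essential, since otherwise the target exponent drops to or below $1$ and the duality argument collapses. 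Everything else — the finite-dimensional uniqueness, the closability already recorded in Proposition \ref{IbP}, and the martingale limit — is, as in \cite{LR}, essentially routine once the drift approximation is in hand.
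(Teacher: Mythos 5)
Your outline follows the same route as the paper in spirit --- both rest on the $L^{p}$-uniqueness machinery of \cite{LR} --- but the paper does not re-run the finite-dimensional approximation scheme you describe. Instead it applies \cite[Theorem 3]{LR} as a black box: after introducing the rigging ${\mathcal H}_{+}=H^{2\gamma+\delta}\subset {\mathcal H}=H^{\gamma}\subset {\mathcal H}_{-}=H^{-\delta}$ (with Hilbert--Schmidt embeddings, which is exactly where the hypothesis $\delta+2\gamma>2$ from (\ref{index-assume}) is used) and writing ${\mathcal L}F=\frac{1}{2}\{{\rm Tr}(D_{\mathcal H}^{2}F)+\mbox{}_{-}(\beta,D_{\mathcal H}F)_{+}\}$ with $\beta=\beta_{\sf OU}+\beta^{(a)}_{\sf cos}$, the entire proof reduces to two moment bounds: $\beta_{\sf OU}\in L^{2p}(\mu;{\mathcal H}_{-})$ and $\beta^{(a)}_{\sf cos}\in L^{2p}(\mu;{\mathcal H})$. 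All the conditional-expectation/limiting arguments you sketch are internal to the cited theorem, so what you would gain by carrying them out explicitly is only independence from \cite{LR}; what the paper's route buys is that the whole burden falls on the Wick-power estimates of Section 3, applied with $\alpha=\gamma$.

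Two points in your proposal are imprecise enough to matter. First, the gain of the factor $\gamma$ does \emph{not} come from ``the Girsanov-type density'': the reweighting from $\mu_{0}$ to $\mu$ is handled by H\"older together with Proposition \ref{Zcos-positive} and costs an arbitrarily small $\varepsilon$ of integrability, gaining nothing. The $\gamma$ enters because the drift must be estimated in the ${\mathcal H}=H^{\gamma}$ norm, which by the chaos expansion is controlled by $\Vert\Phi_{2n+1}\Vert_{H^{-\gamma}}$, and estimate (\ref{zn-est}) with $\alpha=\gamma$ turns the ratio test for $\sum_{n}\frac{|a|^{n}}{n!}\Vert\Phi_{n}\Vert_{L^{P}(\mu_{0};H^{-\gamma})}$ into the condition $P<1+\frac{4\pi\gamma}{a^{2}}$ (you do state the carr\'e-du-champ explanation correctly in your last paragraph, but it contradicts the first). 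Second, the factor $\frac{1}{2}$ in the admissible range of $p$ is not something you can arrange by choosing a slack exponent $q$ close to $1$ as you propose ($pq<\frac{1}{2}(1+\frac{4\pi\gamma}{a^{2}})$ with $q\searrow 1$ would only require the drift in roughly $L^{p}$, and would then suggest the range $p<1+\frac{4\pi\gamma}{a^{2}}$); it is forced by the hypothesis of \cite[Theorem 3]{LR} that the drift lie in $L^{2p}$, so the constraint is $2p(1+\varepsilon)<1+\frac{4\pi\gamma}{a^{2}}$. If you carry out the approximation scheme yourself rather than citing the theorem, you must track where this $L^{2p}$ requirement arises in the a priori estimates; as written, your proposal asserts the target exponent without deriving it.
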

%%%%%%%%%%%%%%%%%%%%%%%%%%%%%%%%%

Moreover, by applying \cite[Theorem 6.1]{AR-SDE}, we obtain the following result as an immediate corollary
of this theorem.
%%%%%%%%%%%%%%%%%%%%%%%%%%%%%%%%%
\begin{co}
There exists a conservative diffusion process 
${\mathbb M}:=(\Theta, {\mathcal F}, ( {\mathcal F}_{t} )_{t\geq 0}, (X_{t} )_{t\geq 0},
\{ {\mathbb P}_{z}\}_{z\in E} )$ such that
%associated with $({\cal E}, {\cal D(E)})$.
the semigroup $\{P_{t}\}_{t\geq  0}$ 
generated by 
the unique 
%(self-adjoint) 
extension of $({\mathcal L}, {\mathfrak F}C^{\infty}_{b})$
%$({\overline {\cal L}_{0}}, {\rm Dom}({\overline {\cal L}_{0}}))$ 
satisfies the following identity
for any bounded Borel measurable function $F:E \to \mathbb R$, and $t>0$:
\begin{equation}
%e^{t{\overline {\cal L}_{0}}}
P_{t}F(z)=\int_{\Theta} F(X_{t}(\omega)) {\mathbb P}_{z}(d\omega), \quad \mu\mbox{-}a.s.~z\in E,
\nonumber
\label{suii-hangun}
\end{equation}
in the sense that the right-hand side is a $\mu$-version for the $L^{2}(\mu)$-class $P_{t}F$.
%{\rm{(3)}}~
Moreover, 
%the diffusion process 
${\mathbb M}$ 
is the unique $\mu$-symmetric Hunt process with the state
space $E$ solving 
the regularized SQEs 
{\rm{(\ref{Miha-SQE})}} 
%in the case $\mu=\mu_{\sf exp}^{(a)}$ 
and 
{\rm {(\ref{SG-SQE})}} 
%in the case $\mu=\mu_{\sf cos}^{(a)}$
%%%%%%%%%%%%%%%%%%%%%%%%%%%%%%%%%%%%%%%%%%%%%%%%%%%%%%%%%%%%%%%%%%%%%%%%%%
%the system of SDEs
%\begin{eqnarray}
%\langle X_{t}, \varphi \rangle &=& \langle z, \varphi \rangle + \langle B_{t}^{(\alpha)}, \varphi \rangle
%-\frac{1}{2}\int_{0}^{t}
%\big \{ \big \langle X_{s}, (1-\Delta)^{1-\alpha}\varphi \big \rangle 
%\nonumber \\
%& & +a 
%\big \langle 
%:\hspace{-0.5mm}{\rm{exp}}(aX_{s})\hspace{-0.5mm}:\hspace{0.5mm}, (1-\Delta)^{-\alpha}\varphi \big \rangle  \big \} ds,
%\quad t>0,~\varphi\in E^{*}, 
%\label{weakform}
%\end{eqnarray}
%%%%%%%%%%%%%%%%%%%%%%%%%%%%%%%%%%%%%%%%%%%%%%%%%%%%%%%%%%%%%%%%%%%%%%%%%
weakly 
%${\mathbb P}_{z}$-almost surely 
for ${\mathcal E}$-q.e. $z\in E$,
in the cases
$\mu=\mu_{\sf exp}^{(a)}$ 
and 
%in the case 
$\mu=\mu_{\sf cos}^{(a)}$, 
respectively.
%(resp. ${\cal E}_{\mu_{0}}$-q.e.) 
%initial datum 
%$z\in E$,
%where $(B_{t})_{t\geq 0}$ is under $\mathbb P_{z}$ an $({\mathcal F}_{t})_{t\geq 0}$-adapted 
%$H$-cylindrical Brownian motion starting at zero. 
Namely,
there exist a set $S\subset E$ with ${\rm Cap}(S)=0$ and 
a system of independent one-dimensional
$({\mathcal F}_{t})_{t\geq 0}$-Brownian motions $\{ (B^{k}_{t})_{t\geq 0}; k\in \mathbb Z^{2} \}$ with $B_{0}^{k}=0$ 
($k\in \mathbb Z^{2}$) 
%on $E$ starting at zero with covariance
%$$ {\mathbb E} \big [ \langle B_{t}^{(\alpha)}, \varphi \rangle^{2} \big ]=2t \Vert \varphi \Vert_{H^{-\alpha}(\mathbb T^{2})}^{2},
%$$
defined on 
the probability space $(\Theta, {\mathcal F}, ({\mathcal F}_{t})_{t\geq 0}, {\mathbb P}_{z})$ such that for any
$z \in E \setminus S$, 
${\mathbb P}_{z} \big( X_{t} \in E \setminus S \mbox{ for all } t\geq 0 \big)=1$ and
the diffusion process 
%and a unique $( {\cal F}_{t} )_{t\geq 0}$-adapted process 
$X=(X_{t})_{t\geq 0}$ 
%living in $C([0,\infty),E)$ 
satisfies 
%%%%%%%%%%%%%%
\begin{align}
\langle X_{t}, e_{k} \rangle =& \langle z, e_{k} \rangle 
%+ (1+\vert k \vert^{2})^{-\alpha/2} B_{t}^{k}
-\frac{1}{2}
\int_{0}^{t}
\Big \{ 
(1+\vert k \vert^{2})^{1-\gamma}
\big \langle X_{s}, e_{k} \big \rangle 
+
(1+\vert k \vert^{2})^{-\gamma}
b(\mu; e_{k})(X_{s})
%\big \langle 
%\hspace{-0.8mm}
%:\hspace{-0.8mm}{\rm{exp}}(aX_{s})\hspace{-0.8mm}: \hspace{0.2mm}(e_{k} 
%\big \rangle  
%)
\Big \} ds
\nonumber 
\\
&+
(1+\vert k \vert^{2})^{-\gamma/2} B_{t}^{k}
\qquad t>0,~k\in {\mathbb Z}^{2}, 
%\label{weakform}
\nonumber
\end{align}
%the equation {\rm{(\ref{SQE})}}
${\mathbb P}_{z}$-almost surely.
\end{co}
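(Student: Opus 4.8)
The plan is to obtain the corollary as a combination of two ingredients: the general construction of \cite[Theorem 6.1]{AR-SDE}, which manufactures a $\mu$-symmetric diffusion solving the coordinate stochastic differential equation out of the integration-by-parts datum recorded in Proposition \ref{IbP}, and the $L^p(\mu)$-uniqueness of Theorem \ref{ES}, which is what promotes this diffusion to the \emph{unique} $\mu$-symmetric Hunt process solving the regularized SQE. I will treat Theorem \ref{ES} and the Wick estimates of Section 3 as given.

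First I would record that $(\mathcal{E},\mathcal{D}(\mathcal{E}))$ is a local, symmetric, quasi-regular Dirichlet form on the separable Hilbert space $E=H^{-\delta}(\mathbb{T}^2)$. Locality is immediate from the gradient form of $\mathcal{E}$; quasi-regularity follows from the standard criteria for classical Dirichlet forms on separable Hilbert spaces (see \cite[Ch.\ IV--V]{MR}), since $\mathfrak{F}C_b^\infty$ separates the points of $E$ (recall $\mathrm{supp}\,\mu=E$), is dense in $\mathcal{D}(\mathcal{E})$, and the chain $K\subset E^*\subset H\subset E$ supplies the compact exhaustion needed to build an $\mathcal{E}$-nest. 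By the theory of \cite{MR,FOT} there is then a $\mu$-symmetric diffusion $\mathbb{M}$ properly associated with $(\mathcal{E},\mathcal{D}(\mathcal{E}))$ whose transition semigroup is a $\mu$-version of $\{P_t\}=\{e^{t\mathcal{L}_\mu}\}$; this is precisely the asserted semigroup representation $P_tF(z)=\int_\Theta F(X_t)\,d\mathbb{P}_z$. Conservativeness is read off directly from $1\in\mathfrak{F}C_b^\infty$, $\mathcal{L}1=0$ and $\mathcal{E}(1,1)=0$, which give $P_t1=1$ and hence an infinite lifetime.

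Next I would invoke \cite[Theorem 6.1]{AR-SDE}. The form $\mathcal{E}(F,G)=\tfrac12\int_E(A^\gamma D_HF,D_HG)_H\,d\mu$ is exactly of the gradient type treated there, and the logarithmic-derivative datum it requires is furnished by Proposition \ref{IbP}, whose drift is the sum of the linear term $\langle z,A^{\gamma-1}\varphi\rangle$ and the nonlinear term $b(\mu;A^\gamma\varphi)$. Its integrability hypotheses hold because the Wick exponential and Wick trigonometric maps, and hence each $b(\mu;e_k)$, lie in $L^p(\mu)$ for the admissible $p$ by Proposition \ref{Def-Wick}, Theorem \ref{Wick-Exp} and Corollary \ref{Wick-Tri} (indeed $\mathcal{L}F\in L^p(\mu)$ for $1\le p<1+\tfrac{4\pi}{a^2}$). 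The theorem then yields, for q.e.\ starting point and after deleting a capacity-zero set $S$, independent one-dimensional $(\mathcal{F}_t)$-Brownian motions $\{B^k\}_{k\in\mathbb{Z}^2}$ under which $\langle X_t,e_k\rangle$ solves the stated equation: testing the gradient form against $e_k$ produces the martingale part of quadratic variation $(A^\gamma e_k,e_k)_H\,t=\lambda_k^{-\gamma}t$, hence equal to $(1+|k|^2)^{-\gamma/2}B^k_t$, the $B^k$ being independent because $(A^\gamma e_k,e_j)_H=\lambda_k^{-\gamma}\delta_{kj}$, while the drift equals $-\tfrac12\{\langle X_s,A^{\gamma-1}e_k\rangle+b(\mu;A^\gamma e_k)(X_s)\}$. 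Since $Ae_k=(1+|k|^2)^{-1}e_k$ and $b(\mu;\cdot)$ is linear in its argument, this is exactly $-\tfrac12\{(1+|k|^2)^{1-\gamma}\langle X_s,e_k\rangle+(1+|k|^2)^{-\gamma}b(\mu;e_k)(X_s)\}$, i.e.\ the coordinate form of (\ref{Miha-SQE}) for $\mu=\mu_{\sf exp}^{(a)}$ and of (\ref{SG-SQE}) for $\mu=\mu_{\sf cos}^{(a)}$; the invariance $\mathbb{P}_z(X_t\in E\setminus S\ \forall t)=1$ is the properness of the nest.

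Finally, for the uniqueness I would argue as follows. Let $\widetilde{\mathbb{M}}$ be any $\mu$-symmetric Hunt process on $E$ solving the SQE weakly for q.e.\ starting point, with transition semigroup $\{\widetilde{P}_t\}$. By symmetry $\{\widetilde{P}_t\}$ is a symmetric sub-Markovian $C_0$-semigroup on $L^2(\mu)$, and since $\mu$ is a finite invariant measure it extends to a $C_0$-contraction semigroup on every $L^p(\mu)$, $1\le p<\infty$. Applying It\^o's formula to $F\in\mathfrak{F}C_b^\infty$ along the weak solution shows that $F$ lies in the domain of its generator with generator value $\mathcal{L}F$, so that generator extends $(\mathcal{L},\mathfrak{F}C_b^\infty)$ on every $L^p(\mu)$. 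The decisive point is that one need \emph{not} assume essential self-adjointness ($p=2$): the range $1\le p<\tfrac12(1+\tfrac{4\pi\gamma}{a^2})$ in Theorem \ref{ES} always contains $p=1$ precisely when $|a|<\sqrt{4\pi\gamma}$, and $L^1(\mu)$-uniqueness already forces the $L^1$-extension of $\{\widetilde{P}_t\}$ to coincide with that of $\{e^{t\mathcal{L}_\mu}\}$; since both $L^1$-semigroups restrict on $L^2(\mu)\subset L^1(\mu)$ to the corresponding $L^2$-semigroups, we get $\widetilde{P}_t=e^{t\mathcal{L}_\mu}$ on $L^2(\mu)$, whence $\widetilde{\mathbb{M}}=\mathbb{M}$ up to $\mu$-equivalence. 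I expect the main obstacle to be the verification of the integrability hypotheses of \cite[Theorem 6.1]{AR-SDE} throughout the full charge range $|a|<\sqrt{4\pi\gamma}$, that is, securing the $L^p(\mu)$-membership of the coordinate drifts $b(\mu;e_k)$ demanded there, which is exactly where the sharpened Wick estimates of Section 3 enter; by comparison, the upgrade from existence to uniqueness is a soft consequence of Theorem \ref{ES} via the $L^1$-extension just described.
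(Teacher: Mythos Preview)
Your proposal is correct and follows exactly the route the paper indicates: the paper does not give a separate proof but simply states that the corollary is obtained ``by applying \cite[Theorem 6.1]{AR-SDE}'' as an immediate consequence of Theorem~\ref{ES}, and your write-up is a faithful expansion of precisely that argument (quasi-regularity of the gradient form, the SDE representation from \cite{AR-SDE}, and uniqueness via the $L^p$-uniqueness of Theorem~\ref{ES}). Your additional care in noting that $L^1$-uniqueness already suffices for the uniqueness assertion, since $|a|<\sqrt{4\pi\gamma}$ guarantees $1<\tfrac12(1+\tfrac{4\pi\gamma}{a^2})$, is a useful clarification the paper leaves implicit.
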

%%%%%%%%%%%%%%%%%%%%%%%%%%%%%%%%%%%%%%%%%%%%%%%%%%%%%%%%%%%%%%%%%%%%%%%%
\begin{re}
\label{compare}
Following arguments in {\rm{\cite{Garban, HKK, ORW19, HS, CHS19}}}, we might construct 
a unique strong solution
for SQE {\rm{(\ref{Miha-SQE})}} (resp.
SQE {\rm{(\ref{SG-SQE})}}) for $\mu^{(a)}_{\sf exp}$- (resp. $\mu^{(a)}_{\sf cos}$-) almost every initial datum. 
However, this does not imply the $L^{p}$-uniqueness of the corresponding Dirichlet
operator. 
%%%%%%%%%
This is obvious, since a priori the latter might have extensions which generate 
%
%non-Markovian {\textcolor{red}{(or even Markovian)}} 
%
semigroups which have no probabilistic interpretation as transition probabilities of Markovian processes.
%%%%%%%%%%%%%%%%%%%%%%%%%%
For example, there might be an extension generating a non-Markovian semigroup which
cannot be the transition probability semigroup of a Markov process or there might be
also an extension generating a Markovian semigroup whose associated Dirichlet form is
not quasi-regular, hence by the main result in \cite{AMR93} and \cite{MR} 
this semigroup can also not be the transition probability
semigroup of a Markov process with c\`adl\`ag paths.
Therefore, in general, none of the properties $L^{p}$-uniqueness of the Dirichlet operator
and strong uniqueness of the corresponding SQE
implies the other. We refer to 
{\rm{\cite[Sections 2 and 3]{Cornell}}}
%and 
%see also {\rm{\cite[Section 8]{DR}}}
for a detailed discussion of this point.
\end{re}
%%%%%%%%%%%%%%%%%%
\section{Preliminaries} 
%from the Wick exponential}
%%%%%%%%%%%%%%%%%%%%%%%%%%%%%%%%%%%%%%%%%%%%%%%%%%%%%%%%%%%%%%%%%%%%%
\subsection{A review of the Wick power}
%%%%%%%%%%%%%%%%%%%%%%%%%%%%%%%%%%%%%%%%%%%%%%%%%%%%%%%%%%%%%%%%%%%%%
We give notations and review some statements
on the Wick power based on \cite{DT-Note}. 
In particular, we discuss the convergence
of the finite dimensional approximation of the Wick power 
in a strengthened form.

Let $\Lambda$ be a subset of $\mathbb R^{2}$ satisfying (\ref{Lambda-condition}) and $\Pi_{\Lambda}$
be the projection operator defined in (\ref{lambda-projection}).
We set 
$$ \rho_{\Lambda}:=
{\mathbb E}^{\mu_{0}} \big [ (\Pi_{\Lambda} z)(x)^{2} \big]^{1/2}
=
\frac{1}{2\pi} \Big \{
\sum_{k\in \mathbb Z^{2}} {\bf 1}_{\Lambda}(k)(1+\vert k \vert^{2})^{-1} \Big \}^{1/2},
\quad 
%N\in \mathbb N,~ 
x\in {\mathbb T}^{2},$$
and define $\eta_{\Lambda,x}\in E^{*}$ by
\begin{equation}
\eta_{\Lambda,x}:=\frac{1}{\rho_{\Lambda}}\sum_{k\in \mathbb Z^{2}} {\bf 1}_{\Lambda}(k) e_{k}(x)e_{k}
=\frac{1}{\rho_{\Lambda}}\sum_{k\in \mathbb Z^{2}} 
{\bf 1}_{\Lambda}(k) 
{\overline{{\bf e}_{k}(x)}}{\bf e}_{k}, \quad x\in {\mathbb T}^{2}.
\label{eta}
\end{equation}
Then we easily have
$$(\Pi_{\Lambda}z)(x)= \rho_{\Lambda} 
{\big \langle} z, \eta_{\Lambda,x} {\big \rangle}, \qquad z\in E,~ x\in {\mathbb T}^{2}.
%N\in {\mathbb N} . 
$$

We next consider the following Laplace equation on $\mathbb T^{2}$:
\begin{equation}
(1-\Delta)^{\alpha} u=f, \qquad \alpha>0.
\label{green-eq}
\end{equation}
If $f=\sum_{k\in {\mathbb Z}^{2}} f_{k}e_{k} \in H^{s}(\mathbb T^{2})$, then 
$$u=(1-\Delta)^{-\alpha}f=\sum_{k\in {\mathbb Z}^{2}} (1+\vert k \vert^{2})^{-\alpha} 
f_{k} e_{k}
\in H^{s+2\alpha}(\mathbb T^{2})$$ solves (\ref{green-eq}), and it
can be also represented as
%using the Green function $G^{(\alpha)}_{\mathbb T^{2}}=
%G^{(\alpha)}_{\mathbb T^{2}}(x,y)=K^{(\alpha)}(x-y)$ by
%the formula
$$ (1-\Delta)^{-\alpha}f(x)=\int_{\mathbb T^{2}} G^{(\alpha)}_{\mathbb T^{2}}(x,y) f(y)dy, \qquad x\in {\mathbb T}^{2}$$ 
by using the Green function $G^{(\alpha)}_{\mathbb T^{2}}(x,y)$.
% $(x,y\in {\mathbb T}^{2})$.
We mention that
%By \cite[Proposition 7.3.1]{GJ}, we have 
\begin{equation}
G^{(\alpha)}_{\mathbb T^{2}}(x,y)
=\sum_{k\in \mathbb Z^{2}} G^{(\alpha)}(x, y+2\pi k), \qquad x,y \in \mathbb T^{2}.
\label{kasaneawase}
\end{equation}
See e.g., \cite[Proposition 7.3.1]{GJ} for details.
Combining Proposition \ref{Green} with (\ref{kasaneawase}), we have
\begin{equation}
%K^{(\alpha)}(x,y)
K^{(\alpha)}(x-y):=G^{(\alpha)}_{\mathbb T^{2}}(x,y)
\leq
\begin{cases}
\displaystyle{C ( 1+\vert x-y \vert_{\mathbb R^{2}}^{2\alpha-2})}
& \text{ if $0<\alpha<1$}
\vspace{1.5mm} \\ 
\displaystyle{
-\frac{1}{2\pi} \log \vert x-y \vert_{\mathbb R^{2}} +C
}
& \text{ if $\alpha=1$},
\end{cases}
\label{period-green-estimate}
\end{equation}
for some constant $C>0$.
%
%Note the Green function $G^{(\alpha)}_{\mathbb T^{2}}(x,y)$
%is represented as $K^{(\alpha)}(x-y)$. 
This estimate implies $K^{(1)}\in L^{\infty -}({\mathbb T}^{2}):=\cap_{p>1} L^{p}(\mathbb T^{2})$ and
$K^{(\alpha)}\in L^{1+}({\mathbb T}^{2}):=\cup_{p>1} L^{p}(\mathbb T^{2})$ for each $0<\alpha<1$.
%L^{1+\varepsilon}(\mathbb T^{2})$ 
%for sufficiently small $\varepsilon>0$.
%We set
%$D(N):=\{ k\in {\mathbb Z}^{2}; \vert k \vert \leq N \}$ and
%$Q(N):=\{ k\in {\mathbb Z}^{2}; \Vert k \Vert \leq N \}$.
We set
%introduce 
%the $N$-th cubic partial sum of $K^{(\alpha)}$ by
%for any $\alpha>1/2$
\begin{equation}
K^{(\alpha)}_{\Lambda}(x):=\frac{1}{2\pi} \sum_{k\in \mathbb Z^{2}} {\bf 1}_{\Lambda}(k) (1+\vert k \vert^{2})^{-\alpha} 
%(e_{k}\otimes e_{k})(x,y),
{\bf e}_{k}(x),
\quad x\in {\mathbb T}^{2}, ~0<\alpha \leq 1.
\label{KN-delta}
\end{equation}
%Thanks to \cite[Theorems 4.1 and 4.4]{Weisz}, 
%Then 
By \cite[Theorems 4.1 and 4.4]{Weisz}, 
for every 
$0<\alpha \leq 1$, 
the $N$-th cubic partial sum $K^{(\alpha)}_{N}\:=K^{(\alpha)}_{Q(N)}$ satisfies
\begin{equation}K^{(\alpha)}(x)=\lim_{N\to \infty} K^{(\alpha)}_{N}(x)~~~
\mbox{ for 
almost everywhere} ~x\in {\mathbb T}^{2},
\label{Weisz-2}
\end{equation}
and
\begin{equation}
\sup_{N\in \mathbb N} \big \Vert K^{(1)}_{N} \big \Vert_{L^{p}} \leq C_{p} \Vert
K^{(1)} \Vert_{L^{p}}, \quad p\geq 1,
\label{Weisz-1}
\end{equation}
for some constant $C_{p}>0$, and 
\begin{equation} 
\sup_{N\in \mathbb N} \big \Vert K^{(\alpha)}_{N} \big \Vert_{L^{1+\varepsilon}} \leq C_{\varepsilon} \Vert
K^{(\alpha)} \Vert_{L^{1+\varepsilon}}, \quad 0<\alpha<1,
\label{Weisz-11}
\end{equation}
where $\varepsilon>0$
is some sufficiently small constant. 
%See e.g., \cite[Theorems 4.1 and 4.4]{Weisz} for details.  
%See also Lemma \ref{HY-Z} for a similar estimate to (\ref{Weisz-1}) in the case $p\geq 2$.
%Furthermore, if we consider the case $\alpha>1/2$,
%we also have $\lim_{N\to \infty} K_{N}^{(\alpha)}=K^{(\alpha)}$ in $L^{2}(\mathbb T^{2})$ because
%$ \sum_{k\in \mathbb Z^{2}} (1+\vert k \vert^{2})^{-2\alpha} <\infty$.
%
% 
%%%%%%%%%%%%%%%%%%%%%%%%%%%%%%%%%%%%%%%%%%%%%%%%%%%%%%%%
%Then for two positive integers $M\geq N$ and $x,y\in {\mathbb T}^{2}$, we have
%\begin{equation} \big ( \eta_{M,x}, \eta_{N,y} \big)_{H^{-1}(\mathbb T^{2})}=\rho_{M}^{-1}\rho_{N}^{-1} 
%K^{(1)}_{N}(x-y), \quad \Vert \eta_{N,x} \Vert_{H^{-1}(\mathbb T^{2})}=1.
%\label{key-eta}
%\end{equation}
%%%%%%%%%%%%%%%%%%%%%%%%%%%%%%%%%%%%%%%%%%%%%%%%%%%%%%%%%%%%%%%%%%%%%%

Next, we introduce a mapping $W: E^{*}\ni f \mapsto W_{f}\in L^{2}(\mu_{0})$ by
$$ W_{f}(z):=\langle z,f \rangle, \quad z\in E.$$
%We note that $E^{*}$ is dense in $H^{-1}(\mathbb T^{2})$ and 
It follows from (\ref{Gauss2}) that $W_{f}\in L^{2}(\mu_{0})$ and 
\begin{equation} \Vert W_{f} \Vert_{L^{2}(\mu_{0})}=\Vert f \Vert_{H^{-1}},
\quad f\in E^{*}.
\nonumber
\label{isometry}
\end{equation}
Therefore the mapping $W_{f}$ gives rise to an isometric isomorphism from
$H^{-1}(\mathbb T^{2})$ into $L^{2}(\mu_{0})$. 
We still denote the mapping from 
$H^{-1}(\mathbb T^{2})$ into $L^{2}(\mu_{0})$
by $W_{f}$ and call it the {\it{white noise function}}.

Let $\{ {\mathbb H}_{n} \}_{n=0}^{\infty}$ be the Hermite polynomials defined by
\begin{eqnarray}
{\mathbb H}_{n}(\xi)&:=& \frac{(-1)^{n}}{{\sqrt{n!}}}e^{\xi^{2}/2} \big( \frac{d}{d\xi} \big)^{n}e^{-\xi^{2}/2},
%\nonumber \\
%&=& 
%\sum_{m=0}^{[n/2]} (-1)^{m} \frac{n!}{(n-2m)!2^{m}m!} \xi^{n-2m},
\quad n=0,1,2,\ldots,~ \xi \in \mathbb R.
\nonumber
\end{eqnarray}
%%%%%%%%%%%%%%%%%%%%%%%%%%%%%%%%%%
%We have in particular
%\begin{align}
%& {\mathbb H}_{0}(\xi)=1, \quad {\mathbb H}_{1}(\xi)=\xi, \quad {\mathbb H}_{2}(\xi)=\frac{1}{\sqrt 2}(\xi^{2}-1),
%\nonumber \\
%& {\mathbb H}_{3}(\xi)=\frac{1}{\sqrt 6} (\xi^{3}-3\xi), \quad  {\mathbb H}_{4}(\xi)=\frac{1}{2{\sqrt 6}} (\xi^{4}-6\xi^{2}+3).
%\nonumber
%\end{align}
%%%%%%%%%%%%%%%%%%%%%%%%%%%%%%%%%%
%where $[n/2]$ denotes the greatest integer not greater than $n/2$.
%$\alpha_{n,m}:=\frac{n!}{(n-2m)!2^{m}m!}$.
%We recall that the generating function is given by
%$$ F(t,\xi):=\sum_{n=0}^{\infty} \frac{t^{n}}{{\sqrt n!}} H_{n}(\xi)=\exp {\big (} -\frac{t^{2}}{2}+t\xi {\big )},$$
%We also give 
For later use, we recall the well-known formula
\begin{eqnarray}
%& &
%\int_{E} e^{W_{f}(z)} \mu_{0}(dz)=\exp \big ( \frac{1}{2} \Vert f \Vert_{H^{-1}(\mathbb T^{2})}^{2} \big),
%\label{Wick-1}
%\\
& &
\int_{E} {\mathbb H}_{m}(W_{f}(z)) {\mathbb H}_{n} (W_{g}(z)) \mu_{0}(dz)
=\delta_{m,n} (f,g)_{H^{-1}}^{n},
\label{Wick-2}
%\\
%& &
%\int_{E} e^{sW_{f}(z)} {\mathbb H}_{n} (W_{g}(z)) \mu_{0}(dz) =
%\frac{s^{n}}{\sqrt{n!}} e^{s^{2}/2} (f,g)_{H^{-1}}^{n}, \quad s\in \mathbb R,
%\label{Wick-3}
\end{eqnarray}
where $f,g\in H^{-1}(\mathbb T^{2})$ with $\Vert f \Vert_{H^{-1}}
=\Vert g \Vert_{H^{-1}}=1$. 
It follows from (\ref{Wick-2}) that ${\mathbb H}_{n}(W_{f})\in L^{2}(\mu_{0})$ 
for any $f\in H^{-1}(\mathbb T^{2})$ and $n\in \mathbb N \cup \{0 \}$.

For a real separable Hilbert space ${\mathcal K}$,
we denote by 
%be a separable Hilbert space 
%and $L^{2}(\mu_{0}; {\mathcal K})$ denotes the set of $\mathcal K$-valued
%$L^{2}(\mu_{0})$-functions defined on $E$.
%and
${\mathcal C}_{n}({\mathcal K})$, $n\in \mathbb N$, 
the closed linear subspace
of $L^{2}(\mu_{0}; {\mathcal K})$ generated by the $\mathcal K$-valued random variables 
$\{ {\mathbb H}_{n}(W_{f})k ;~ f\in H^{-1}(\mathbb T^{2}), \Vert f \Vert_{H^{-1}}=1, 
k\in {\mathcal K} \}$
and by ${\mathcal C}_{0}(\mathcal K)$ the set of constant $\mathcal K$-valued vectors. 
We set ${\mathcal C}_{n}:={\mathcal C}_{n}({\mathbb R})$ for simplicity.
The space ${\mathcal C}_{n}(\mathcal K)$
%$(n\in {\mathbb N} \cup \{0\})$ 
is called the {\it{Wiener chaos}} of order $n$
and the following
It\^o-Wiener decomposition holds:
\begin{equation}
L^{2}(\mu_{0}; {\mathcal K})={\oplus}_{n=0}^{\infty} {\mathcal C}_{n}(\mathcal K).
\label{IW-dec}
\nonumber
\end{equation}
Applying the Nelson hypercontractivity estimate 
(cf. \cite[Theorem I.22]{Si}), we also have
\begin{equation}
\Vert F \Vert_{L^{p}(\mu_{0}; {\mathcal K})}
\leq ( p-1 )^{n/2} 
\Vert F \Vert_{L^{2}(\mu_{0}; {\mathcal K})}, \qquad F\in {\mathcal C}_{n}({\mathcal K}),~ p\geq 2.
 \label{Nelson-Hyper}
\end{equation}

We now introduce the renormalized powers. 
For any $n\in {\mathbb N} \cup \{0 \}, N\in \mathbb N$ and
$x\in {\mathbb T}^{2}$, we define the {\it{$\rho_{\Lambda}^{n}$-regularized Wick power evaluated 
at $x\in {\mathbb T}^{2}$}} by
\begin{align}
{\Phi}_{n,\Lambda}(z)_{x}&:={\sqrt{n!}} \rho_{\Lambda}^{n}{\mathbb H}_{n} {\Big (} 
\rho_{\Lambda}^{-1}
%\frac{1}{\rho_{\Lambda}} 
(\Pi_{\Lambda} z)(x) {\Big ) }
\nonumber \\
&=
{\sqrt{n!}} \rho_{\Lambda}^{n}{\mathbb H}_{n} {\big ( } \langle z, \eta_{\Lambda,x} \rangle {\big ) }
%\nonumber \\
%&=& 
={\sqrt{n!}} \rho_{\Lambda}^{n}{\mathbb H}_{n} {\big ( } W_{\eta_{\Lambda, x}}(z) {\big ) }, \qquad z\in E.
\label{phi-def}
\end{align}
By (\ref{eta}) and
(\ref{KN-delta}), we have
%Then for two positive integers $M\geq N$ and $x,y\in {\mathbb T}^{2}$, we have
\begin{equation} \big ( \eta_{\Lambda,x}, \eta_{\Lambda',y} \big)_{H^{-1}}=\rho_{\Lambda}^{-1}\rho_{\Lambda'}^{-1} 
K^{(1)}_{\Lambda}(x-y), \quad \Vert \eta_{\Lambda,x} \Vert_{H^{-1}}=1, \qquad x,y\in {\mathbb T}^{2}
\label{key-eta}
\end{equation}
for any two subsets $\Lambda \subset \Lambda'$ in $\mathbb R^{2}$ satisfying
(\ref{Lambda-condition}).
%two assumptions mentioned at the beginning of this section.
Thus it follows from (\ref{Wick-2}), (\ref{phi-def}) and (\ref{key-eta})
that
\begin{equation}
\Vert \Phi_{n,\Lambda}(\cdot)_{x} \Vert_{L^{2}(\mu_{0})}={\sqrt{n!}}\rho_{\Lambda}^{n}, \qquad  
n\in {\mathbb N} \cup \{0 \}, x\in \mathbb T^{2}.
\label{phi-g-kasekibun}
\end{equation}
%This implies 
%\begin{equation}
%\int_{\mathbb T^{2}} \Vert \Phi_{n,N}(\cdot)_{x}g(x) \Vert_{L^{2}(\mu_{0})} dx \leq 2\pi {\sqrt{n!}}\rho_{N}^{n} \Vert g %\Vert_{H}<\infty, 
%\quad g\in H.
%\label{phi-g-kasekibun}
%\end{equation}
We now introduce
\begin{equation}
(\Phi_{n,\Lambda}(\cdot), g)_{H}:=
\int_{\mathbb T^{2}} \Phi_{n,\Lambda}(\cdot)_{x}g(x) dx \quad \mbox{ in }L^{2}(\mu_{0}), \quad g\in H.
\label{phi-g-naiseki}
\end{equation}
Note that the right-hand side of (\ref{phi-g-naiseki}) is well-defined in the sense of Bochner integrals
because of (\ref{phi-g-kasekibun}).
Since ${\mathcal C}_{n}$ 
is a closed subspace of $L^{2}(\mu_{0})$ and 
$\Phi_{n,\Lambda}(\cdot)_{x}\in {\mathcal C}_{n}$ for
every $x\in \mathbb T^{2}$, we also see that
$(\Phi_{n,\Lambda}(\cdot), g)_{H}\in {\mathcal C}_{n}$ and 
thus
$\Phi_{n,\Lambda}\in {\mathcal C}_{n}(H^{-\alpha}(\mathbb T^{2}))$ for all $\alpha \geq 0$.

Although the following proposition might be well-known (cf. \cite[Section V.6]{Si}), we reproduce
the proof 
%in Appendix 
for the sake of completeness and later purpose.
%%%%%%%%%%%%%%%%%%%%%%%%%%%%%%%%%%%%%%%%%%%%%%%%%%%%%%%%%%%%%%%%%%%%%%
\begin{pr}
\label{Def-Wick}
{\rm (1)}~Let $p\geq 1$ and $\alpha>0$. 
Then $\{{\Phi}_{n,N\Lambda}; N\in \mathbb N \}$ is a Cauchy sequence in 
$L^{p}(\mu_{0};H^{-\alpha}(\mathbb T^{2}))$, 
and hence it converges to a mapping ${\Phi}_{n} \in L^{p}(\mu_{0}; H^{-\alpha}(\mathbb T^{2}))$. 
Furthermore, the limit $\Phi_{n}$ is independent of the choice of $\Lambda$.
We denote ${\Phi}_{n}(z)$, $z\in E$, by
$:z^{n}:$ and call it the Wick power of order $n$. 
Moreover, for $p\geq 2$, $n\in \mathbb N$ and $\varepsilon >0$, we have
\begin{equation}
\Vert \Phi_{n} \Vert_{L^{p}(\mu_{0}; H^{-\alpha}(\mathbb T^{2}))}
%\Big( \int_{E} \Vert \hspace{-1mm} :\hspace{-0.5mm} z^{n}\hspace{-0.5mm} :\hspace{-1mm} 
%\Vert_{H^{-\alpha}({\mathbb T}^{2})}
%^{p}\mu_{0}(dz) \Big)^{1/p}
\leq 
C
(p-1)^{n/2}{\sqrt{n!}} \Big \{ 
%C \
%4
\alpha^{-2}
\big( \frac{1+\varepsilon}{4\pi \alpha} \big)^{n-1} 
%(2\alpha)^{-2} 
n!
+C^{n} 
 \big( \frac{1+\varepsilon}{\varepsilon} \big)^{n-1} \Big \}^{1/2},
%\Vert K^{(\delta)} \Vert_{L^{2}(\mathbb T^{2}\times \mathbb T^{2})}^{1/2} \kappa_{2n}^{n/2},
\label{zn-est}
\end{equation}
where $C>0$ is independent of $\varepsilon >0$.
%where $K^{(\delta)}$ is the integral kernel of the operator
%$C^{\delta}:H\to H$ and
%\begin{equation}
%\kappa_{2n}:=
%\big( \frac{1}{{\sqrt 2}\pi} \big)^{\frac{2(n-1)}{n}}
%\Big \{ \sum_{k\in \mathbb Z^{2}} \big(\frac{1}{1+\vert k \vert^{2}} \big)^{\frac{2n}{2n-1}} \Big \}^{\frac{2n-1}{2n}}.
%\label{kappa-def}
%\end{equation}
%
%the mapping $\Phi_{n}$ belongs to $L^{p}(\mu_{0};E)$ and ${\Phi}_{n,N}\to \Phi_{n}$ in $L^{p}(\mu_{0};E)$
%as $N\to \infty$
%for all $p\geq 1$.
\\
%%%%%%%%%%%%%%%%%%%%%%%
{\rm (2)}~
%Let $p\geq 1$. Then 
For each $g\in H$, the sequence $\{({\Phi}_{n,N\Lambda}(\cdot),g)_{H}; N\in \mathbb N \}$ converges 
to a function $\Phi_{n}(\cdot)(g)$ in $L^{p}(\mu_{0})$, $p\geq 1$ as $N\to \infty$. 
Furthermore, the limit $\Phi_{n}(\cdot)(g)$ is independent of the choice of $\Lambda$.
We denote ${\Phi}_{n}(z)(g)$, $z\in E$,
by $:\hspace{-0.5mm} z^{n}\hspace{-0.5mm} :(g)$ and 
we have ${\Phi}_{n}(\cdot)(g)\in {\mathcal C}_{n}$,
and
%. Moreover, for $p\geq 2$ and $n\in \mathbb N$ and $\varepsilon >0$, we have
\begin{equation}
\Vert \Phi_{n}(\cdot)(g) \Vert_{L^{p}(\mu_{0})}
%\Big( \int_{E} \Vert \hspace{-1mm} :\hspace{-0.5mm} z^{n}\hspace{-0.5mm} :\hspace{-1mm} 
%\Vert_{H^{-\alpha}({\mathbb T}^{2})}
%^{p}\mu_{0}(dz) \Big)^{1/p}
\leq 
C
(p-1)^{n/2}{\sqrt{n!}} 
%\Vert g \Vert_{L^{2}} 
\Big \{ 
%C \
\big( \frac{1+\varepsilon}{4\pi} \big)^{n-1} 
%(2\alpha)^{-(n+1)} 
n!
+C^{n} 
 \big( \frac{1+\varepsilon}{\varepsilon} \big)^{n-1} \Big \}^{1/2}
 \Vert g \Vert_{H},
%\Vert K^{(\delta)} \Vert_{L^{2}(\mathbb T^{2}\times \mathbb T^{2})}^{1/2} \kappa_{2n}^{n/2},
\label{zng-est}
\end{equation}
where $C>0$ is independent of $\varepsilon >0$.
Furthermore, 
%we have
\begin{equation}
 {\Phi}_{n}(z)(g)=
 \subscripts
 {H^{-\alpha}}
{ \langle : \hspace{-0.5mm} z^{n}
\hspace{-0.5mm} :, g  \rangle
}
{H^{\alpha}}, \quad \mu_{0}\mbox{-a.e. }z\in E,~ g\in H^{\alpha}(\mathbb T^{2}).
\label{wick-dual}
\end{equation}
\end{pr}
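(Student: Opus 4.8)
The plan is to establish everything at the level of the $\rho_\Lambda^n$-regularized Wick powers $\Phi_{n,\Lambda}$ and then pass to the limit. First I would compute, for two admissible sets $\Lambda\subset\Lambda'$, the $L^2(\mu_0;H^{-\alpha})$-norm of $\Phi_{n,\Lambda}$ explicitly. Since $\Phi_{n,\Lambda}\in\mathcal C_n(H^{-\alpha})$, the only thing needed is the covariance, and by (\ref{phi-def}), (\ref{Wick-2}) and (\ref{key-eta}) one gets
\begin{equation}
\int_E\big(\Phi_{n,\Lambda}(z)_x,\,\Phi_{n,\Lambda'}(z)_y\big)\,\mu_0(dz)
= n!\,\big(K^{(1)}_\Lambda(x-y)\big)^n\quad\text{(resp. with one index $\Lambda'$)},
\nonumber
\end{equation}
so that $\Vert\Phi_{n,\Lambda}\Vert_{L^2(\mu_0;H^{-\alpha})}^2 = n!\sum_{k}(1+|k|^2)^{-\alpha}\widehat{(K^{(1)}_\Lambda)^{\ast n}}(k)$ after using that for $g\in\mathcal C_n(H^{-\alpha})$ the $H^{-\alpha}$-norm is a sum over Fourier modes of the mode-wise variances. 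Equivalently one integrates $n!\,(K^{(1)}_\Lambda(x-y))^n$ against the kernel $K^{(\alpha)}(x-y)$ over $\mathbb T^2\times\mathbb T^2$. The key analytic input is then the bound (\ref{period-green-estimate}): $K^{(1)}$ has only a logarithmic singularity, so $(K^{(1)})^n\in L^{1}$ with norm controlled by combining the $-\frac1{2\pi}\log|x-y|$ part (whose $n$-th power integrates to something like $n!\,(4\pi)^{-n}$ up to the $\varepsilon$-room coming from the constant $C$ in (\ref{period-green-estimate})) and the remainder; this is exactly the source of the two terms $\alpha^{-2}(\frac{1+\varepsilon}{4\pi\alpha})^{n-1}n!$ and $C^n(\frac{1+\varepsilon}{\varepsilon})^{n-1}$ in (\ref{zn-est}), the first term's $\alpha^{-2}$ and the extra $\alpha^{-(n-1)}$ coming from summing $(1+|k|^2)^{-\alpha}$ weights / convolving against $K^{(\alpha)}$.

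Next I would prove the Cauchy property. For $N<M$, $\Phi_{n,M\Lambda}-\Phi_{n,N\Lambda}$ again lies in $\mathcal C_n(H^{-\alpha})$, and its squared norm expands into $\Vert\Phi_{n,M\Lambda}\Vert^2 + \Vert\Phi_{n,N\Lambda}\Vert^2 - 2\int(\Phi_{n,M\Lambda},\Phi_{n,N\Lambda})$, i.e. into integrals of $n!\big[(K^{(1)}_{M\Lambda})^n + (K^{(1)}_{N\Lambda})^n - 2(K^{(1)}_{N\Lambda})^n\big]$-type expressions against $K^{(\alpha)}$ (using $\Lambda\subset\Lambda'$ so that the mixed covariance only sees the smaller index). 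By (\ref{Weisz-2}) $K^{(1)}_{N\Lambda}\to K^{(1)}$ a.e. and by (\ref{Weisz-1}) the family is bounded in every $L^p$, hence $(K^{(1)}_{N\Lambda})^n\to (K^{(1)})^n$ in $L^1$ by uniform integrability (Vitali); the same for $M\Lambda$. This forces the expansion to tend to $0$, giving a Cauchy sequence in $L^2(\mu_0;H^{-\alpha})$; the limit $\Phi_n$ inherits the bound (\ref{zn-est}) for $p=2$, and the general $p\ge 2$ case follows immediately from Nelson hypercontractivity (\ref{Nelson-Hyper}) with the factor $(p-1)^{n/2}$, while $1\le p<2$ is controlled by $p=2$. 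Independence of $\Lambda$ follows because for two admissible $\Lambda_1,\Lambda_2$ one may compare both to $\Lambda_1\cap\Lambda_2$ (also admissible) along the same lines, or simply note the limit covariance is $n!\,(K^{(1)})^n$ regardless of $\Lambda$. This proves part (1).

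For part (2), I would run the identical argument in $L^p(\mu_0;\mathbb R)$ for the scalar random variables $(\Phi_{n,N\Lambda}(\cdot),g)_H=\int_{\mathbb T^2}\Phi_{n,N\Lambda}(\cdot)_x g(x)\,dx$; these lie in $\mathcal C_n$, and
\begin{equation}
\Big\Vert(\Phi_{n,\Lambda}(\cdot),g)_H\Big\Vert_{L^2(\mu_0)}^2
= n!\iint_{\mathbb T^2\times\mathbb T^2}\big(K^{(1)}_\Lambda(x-y)\big)^n g(x)g(y)\,dx\,dy
\le n!\,\big\Vert(K^{(1)}_\Lambda)^n\big\Vert_{L^\infty-\text{type}}\Vert g\Vert_H^2,
\nonumber
\end{equation}
more precisely by Young's inequality $\le n!\,\Vert(K^{(1)}_\Lambda)^n\Vert_{L^1}\Vert g\Vert_H^2$, and $\Vert(K^{(1)})^n\Vert_{L^1}$ is bounded by the bracket in (\ref{zng-est}) (now with no $\alpha$'s, the $(4\pi)^{-(n-1)}$ coming purely from the log singularity). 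The Cauchy/limit/independence arguments are verbatim as in (1), using (\ref{Weisz-2})--(\ref{Weisz-1}) again, and the limit $\Phi_n(\cdot)(g)$ stays in $\mathcal C_n$ since each approximant does and $\mathcal C_n$ is closed. Finally, for the duality identity (\ref{wick-dual}) with $g\in H^\alpha$: both sides are limits in $L^2(\mu_0)$ of the corresponding expressions for $\Phi_{n,N\Lambda}$, for which the identity $(\Phi_{n,N\Lambda}(\cdot),g)_H = {}_{H^{-\alpha}}\langle\Phi_{n,N\Lambda},g\rangle_{H^\alpha}$ is just the definition of the dual pairing applied to the $H^{-\alpha}$-valued, in fact $K$-valued, random variable $\Phi_{n,N\Lambda}$; passing to the limit on both sides (part (1) gives convergence of $\Phi_{n,N\Lambda}$ in $L^2(\mu_0;H^{-\alpha})$, hence of the pairing against fixed $g\in H^\alpha$, and part (2) gives convergence of the left side) yields the claim $\mu_0$-a.e.

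The main obstacle I expect is the quantitative combinatorial bookkeeping behind (\ref{zn-est}): one must turn the logarithmic bound (\ref{period-green-estimate}) for $K^{(1)}$ into the precise factorial growth $n!\,(4\pi\alpha)^{-(n-1)}$ for $\Vert(K^{(1)})^n\Vert$ after weighting by $K^{(\alpha)}$, splitting $K^{(1)}=-\frac1{2\pi}\log|\cdot|+\text{(bounded)}$, binomially expanding $(K^{(1)})^n$, using $\int_{\mathbb T^2}(-\log|x|)^m\,dx\lesssim m!$ together with the convolution against $K^{(\alpha)}$ (which contributes the $\alpha^{-1}$ per factor and $\alpha^{-2}$ overall), and absorbing the cross terms into the second summand with its $(\tfrac{1+\varepsilon}{\varepsilon})^{n-1}$; keeping the $\varepsilon$-dependence clean and the constant $C$ genuinely independent of $\varepsilon$ is the delicate part. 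Everything else is soft functional analysis (hypercontractivity, closedness of chaoses, Vitali).
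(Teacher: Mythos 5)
Most of your proposal reproduces the paper's own argument: the covariance identity $n!\,(K^{(1)}_{\Lambda}(x-y))^{n}$ via (\ref{Wick-2}) and (\ref{key-eta}), the reduction of the $H^{-\alpha}$-norm to $\int_{\mathbb T^{2}}K^{(\alpha)}(x)K^{(1)}_{N\Lambda}(x)^{n}dx$, the $\varepsilon$-split of the logarithmic singularity producing the two summands of (\ref{zn-est}) (this is exactly the paper's estimate (\ref{sei})), Nelson hypercontractivity for $p\geq 2$, Young's inequality for part (2), and the limiting argument for (\ref{wick-dual}). That part is fine.

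The genuine gap is in your Cauchy step. You invoke (\ref{Weisz-2}) and (\ref{Weisz-1}) to claim that $K^{(1)}_{N\Lambda}\to K^{(1)}$ almost everywhere and that $\sup_{N}\Vert K^{(1)}_{N\Lambda}\Vert_{L^{p}}<\infty$, and then conclude $(K^{(1)}_{N\Lambda})^{n}\to (K^{(1)})^{n}$ in $L^{1}$ by Vitali. But the cited results of Weisz concern the \emph{cubic} partial sums $K^{(\alpha)}_{N}=K^{(\alpha)}_{Q(N)}$ only; for a general admissible $\Lambda$ (e.g.\ the disk $D(1)$, explicitly allowed by (\ref{Lambda-condition})) neither the a.e.\ convergence nor the uniform $L^{p}$ bounds of the partial sums $K^{(1)}_{N\Lambda}$ follow from what is quoted, and for non-rectangular summation in dimension two such statements are delicate (the $L^{p}$-behaviour of disk multipliers being the standard cautionary example). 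So as written your argument only establishes the Cauchy property for $\Lambda=Q(1)$, and your proposed remedy for general $\Lambda$ (``note the limit covariance is $n!\,(K^{(1)})^{n}$ regardless of $\Lambda$'') presupposes the very convergence in question. The paper avoids this entirely: it writes $K^{(1)}_{M\Lambda}{}^{n}-K^{(1)}_{N\Lambda}{}^{n}=(K^{(1)}_{M\Lambda}-K^{(1)}_{N\Lambda})\sum_{j}(K^{(1)}_{M\Lambda})^{j}(K^{(1)}_{N\Lambda})^{n-1-j}$, applies the generalized H\"older inequality with $K^{(\alpha)}\in L^{\mathfrak p}$, and controls every factor through the Hausdorff--Young bound of Lemma \ref{HY-Z}, so that the difference is dominated by a tail of the convergent series $\sum_{k}(1+\vert k\vert^{2})^{-n\mathfrak q/(n\mathfrak q-1)}$ over $M\Lambda\setminus N\Lambda$. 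This works for arbitrary $\Lambda$, needs only Fourier coefficients (the Weisz results enter solely for the cubic sums $K^{(\alpha)}_{M}$ in the norm identity), and the same tail estimate over $Q(NR)\setminus NQ(r)$ is what gives the independence of $\Lambda$. To repair your proof you would either adopt this quantitative Fourier-side comparison, or first treat cubes and then transfer to general $\Lambda$ --- but that transfer again requires a quantitative bound of Hausdorff--Young type, so the detour through Vitali buys nothing.
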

%%%%%%%%%%%%%%%%%%%%%%%%%%%%%%%%%%%%%%%%%%%%%%%%%%%%%%%%%%%%%%%%%%%%%%
%\begin{re}
%$\Phi_{n}(z)(g)$ is linear in $g$. It should of course not be confused with the Wick power
%$:\langle z,g \rangle^{n}:$ of $\langle z,g \rangle$, which is homogeneous of order $n$ in $g$.
%\end{re}
%%%%%%%%%%%%%%%%%%%%%%%%%%%%%%%%%%%%%%%%%%%%%%%%%%%%%%%%%%%%%
\vspace{2mm}

%%%%%%%%%
\begin{proof}~Without loss of generality, we may assume $n\in \mathbb N$ and $p\geq 2$.
(There is nothing to prove in the case $n=0$.) 
%First, we consider the case $\Lambda=Q(1)$. 
%Throughout the proof, we denote
%$\Phi_{n, Q(N)}$, $\rho_{Q(N)}$ and $\eta_{Q(N),x}$ by
%$\Phi_{n, N}$, $\rho_{N}$ and $\eta_{N,x}$, respectively for simplicity of notations.
%
By
(\ref{Weisz-2}), (\ref{Weisz-1}) and (\ref{Weisz-11}), we have
\begin{align}
\lim_{M \to \infty} \int_{\mathbb T^{2}} 
K^{(\alpha)}_{M}(x) 
K_{\Lambda}^{(1)}(x)^{n} dx &=
\int_{\mathbb T^{2}} 
K^{(\alpha)}(x) 
K_{\Lambda}^{(1)}(x)^{n} dx, 
%\qquad N\in \mathbb N, 
\label{shusoku1}
\\
\lim_{N \to \infty} \int_{\mathbb T^{2}} 
K^{(\alpha)}(x) 
K_{N}^{(1)}(x)^{n} dx &=
\int_{\mathbb T^{2}} 
K^{(\alpha)}(x) 
K^{(1)}(x)^{n} dx.
\label{shusoku2}
\end{align}
It follows from an elementary inequality 
$$(a+b)^{n}\leq (1+\varepsilon)^{n-1}a^{n}+ \big(\frac{1+\varepsilon}{\varepsilon} \big)^{n-1}
b^{n}, \quad a,b, \varepsilon>0$$ 
and (\ref{period-green-estimate}) that
%$a,b, \varepsilon>0$, 
%we obtain
\begin{eqnarray}
%\lefteqn{
 \int_{\mathbb T^{2}} 
K^{(\alpha)}(x)
K^{(1)}(x)^{n}dx
%}
%\nonumber \\
& \leq &C^{n}+C(1+\varepsilon)^{n-1} \int_{\vert x \vert_{\mathbb R^{2}} \leq 1} 
\vert x \vert_{\mathbb R^{2}}^{2\alpha-2} \big( -\frac{1}{2\pi} \log 
\vert x \vert_{\mathbb R^{2}} \big)^{n} dx
\nonumber \\
&\mbox{  }&+C^{n+1}
 \big( \frac{1+\varepsilon}{\varepsilon} \big)^{n-1} \int_{\vert x \vert_{\mathbb R^{2}} \leq 1} 
\vert x \vert_{\mathbb R^{2}}^{2\alpha-2} dx
\nonumber \\
&\leq& C^{n}+C
\big( \frac{1+\varepsilon}{2\pi} \big)^{n-1} (2\alpha)^{-(n+1)} n!
+C^{n+1} 
 \big( \frac{1+\varepsilon}{\varepsilon} \big)^{n-1} \cdot \big(\frac{\pi}{\alpha}\big)
 \nonumber \\
&\leq & 
 C
 \alpha^{-2}
\big( \frac{1+\varepsilon}{4\pi \alpha} \big)^{n-1} 
%(2\alpha)^{-(n+1)} 
n!
+C^{n+1} 
\alpha^{-1}
 \big( \frac{1+\varepsilon}{\varepsilon} \big)^{n-1} , \qquad 
 \varepsilon >0.
\label{sei}
\end{eqnarray}

Now we prove item (1).  
Combining (\ref{Wick-2}), (\ref{phi-def}) with 
(\ref{key-eta}), we have 
\begin{eqnarray}
\lefteqn{
\int_{E} (\Phi_{n,M\Lambda}(z),f)_{H}(\Phi_{n,N\Lambda}(z),g)_{H} \mu_{0}(dz)}
\nonumber \\
&=& n! \rho_{M\Lambda}^{n}\rho_{N\Lambda}^{n} \int_{\mathbb T^{2}}\int_{\mathbb T^{2}} f(x)g(y) 
\big( \eta_{M \Lambda,x},
\eta_{N \Lambda,y} \big)_{H^{-1}}^{n}dxdy
\nonumber \\
&=& n! \int_{\mathbb T^{2}}\int_{\mathbb T^{2}} K_{N\Lambda}^{(1)}(x-y)^{n} f(x)g(y)dxdy, 
\qquad M\geq N,~f, g\in H. 
\label{3-11}
\end{eqnarray}
%%%%%%%%%%%%%%%%
%On the other hand, it follows from (\ref{Weisz-2}) and (\ref{Weisz-11}) that
%\begin{equation}
%\lim_{M \to \infty} \int_{\mathbb T^{2}} 
%K^{(\alpha)}_{M}(x) 
%K_{N}^{(1)}(x)^{n} dx=
%\int_{\mathbb T^{2}} 
%K^{(\alpha)}(x) 
%K_{N}^{(1)}(x)^{n} dx
%\end{equation}
%%%%%%%%%%%%%%%%
Then by
(\ref{KN-delta}), (\ref{shusoku1}) and (\ref{3-11}), we have
\begin{eqnarray}
\big \Vert \Phi_{n,N\Lambda} \big \Vert_{L^{2}(\mu_{0};H^{-\alpha}(\mathbb T^{2}))}^{2}
&=&
\int_{E} \sum_{k\in {\mathbb Z}^{2}} \Big \{
(1+\vert k \vert^{2})^{-\alpha}
{\widehat{\Phi_{n,N\Lambda}(z)_{\cdot}}}(k)
{\widehat{\Phi_{n,N\Lambda}(z)_{\cdot}}}(-k)
\Big \}
\mu_{0}(dz)
\nonumber \\
&=&n!
\sum_{k\in {\mathbb Z}^{2}}
(1+\vert k \vert^{2})^{-\alpha}
\int_{\mathbb T^{2}}\int_{\mathbb T^{2}}
K_{N\Lambda}^{(1)}(x-y)^{n}{\bf e}_{k}(x) {\bf e}_{-k}(y) dxdy
\nonumber \\
&=&
\frac{n!}{2\pi}
\sum_{k\in {\mathbb Z}^{2}}
(1+\vert k \vert^{2})^{-\alpha}
\int_{\mathbb T^{2}}\int_{\mathbb T^{2}}
K_{N\Lambda}^{(1)}(x-y)^{n}{\bf e}_{k}(x-y)dxdy
\nonumber \\
&=&
\frac{n!}{2\pi}
\lim_{M\to \infty}
\sum_{\Vert k \Vert \leq M}
(1+\vert k \vert^{2})^{-\alpha}
(2\pi)^{2}\int_{\mathbb T^{2}}
K_{N\Lambda}^{(1)}(x)^{n}{\bf e}_{k}(x)dx
\nonumber \\
&=&
4\pi^{2} n! \lim_{M \to \infty} \int_{\mathbb T^{2}} 
K^{(\alpha)}_{M}(x)
K_{N\Lambda}^{(1)}(x)^{n} dx
%K^{(\alpha)}_{M}(x) dx
\nonumber \\
&=&
%=
4\pi^{2} n!  \int_{\mathbb T^{2}} 
K^{(\alpha)}(x)
K^{(1)}_{N\Lambda}(x)^{n}  dx.
\nonumber
\end{eqnarray}

On the other hand, for any $M, N\in {\mathbb N}$, we also have
\begin{eqnarray}
\lefteqn{
\int_{\mathbb T^{2}} 
K^{(\alpha)}_{M}(x)
K_{N\Lambda}^{(1)}(x)^{n} dx
}
\nonumber \\
&=&\big(\frac{1}{2\pi} \big)^{n+1}\int_{\mathbb T^{2}}
\Big \{ \sum_{\Vert k \Vert \leq M} (1+\vert k \vert^{2})^{-\alpha} {\bf e}_{k}(x) \Big \}
\cdot
\Big \{ \sum_{l\in \mathbb Z^{2}}
%\Vert l \Vert \leq N} 
{\bf 1}_{N\Lambda}(l)
(1+\vert l \vert^{2})^{-\alpha} {\bf e}_{l}(x) \Big \}^{n} dx
\nonumber \\
&=&
\big(\frac{1}{2\pi} \big)^{n+1}
\sum_{\Vert k \Vert \leq M}
(1+\vert k \vert^{2})^{-\alpha}
\sum_{l(1)\in {\mathbb Z}^{2}}
\cdots
\sum_{l(n) \in {\mathbb Z}^{2}}
\Big \{ \prod_{j=1}^{n}
{\bf 1}_{N\Lambda}(l(j))
(1+\vert l(j) \vert^{2})^{-1} \Big \}
%\nonumber \\
%&\mbox{ }&
%\times
\int_{\mathbb T^{2}} {\bf e}_{k+l(1)+\cdots+l(n)} (x)dx
\nonumber \\
&=&
\big(\frac{1}{2\pi} \big)^{n-1}
\sum_{\Vert k \Vert \leq M}
(1+\vert k \vert^{2})^{-\alpha}
\sum_{l(1)\in {\mathbb Z}^{2}}
\cdots
\sum_{l(n) \in {\mathbb Z}^{2}}
\Big \{ \prod_{j=1}^{n}
{\bf 1}_{N\Lambda}(l(j))
(1+\vert l(j) \vert^{2})^{-1} \Big \}
{\bf 1}_{k+l(1)+\cdots+l(n)=0}
\nonumber \\
&\leq &
\big(\frac{1}{2\pi} \big)^{n-1}
\sum_{k\in {\mathbb Z}^{2}}
(1+\vert k \vert^{2})^{-\alpha}
\sum_{l(1)\in {\mathbb Z}^{2}}
\cdots
\sum_{l(n)\in {\mathbb Z}^{2}}
\Big \{
\prod_{j=1}^{n}(1+\vert l(j) \vert^{2})^{-1}
\Big \}
{\bf 1}_{k+l(1)+\cdots+l(n)=0}
\nonumber \\
&=&
\lim_{N \to \infty} \Big( \lim_{M\to \infty}
\int_{\mathbb T^{2}} 
K^{(\alpha)}_{M}(x)
K_{N}^{(1)}(x)^{n} dx
\Big)
=
%\nonumber \\
%&=&
\int_{\mathbb T^{2}}   
K^{(\alpha)}(x) K^{(1)}(x)^{n}  dx, 
\nonumber
\end{eqnarray}
where we used (\ref{shusoku1}) and (\ref{shusoku2}) for the final line.
Thus we obtain $\Phi_{n,N\Lambda}\in {\mathcal C}_{n}(H^{-\alpha}({\mathbb T}^{2}))$ and
\begin{align}
%\lim_{N\to \infty}
\big \Vert \Phi_{n,N\Lambda} \big \Vert_{L^{p}(\mu_{0};H^{-\alpha}(\mathbb T^{2}))}^{2}
&\leq 
(p-1)^{n}
\big  \Vert \Phi_{n,N\Lambda} \big \Vert_{L^{2}(\mu_{0};H^{-\alpha}(\mathbb T^{2}))}^{2}
\nonumber \\
&
\leq 
4\pi^{2}n!
(p-1)^{n}
\int_{\mathbb T^{2}} 
K^{(\alpha)}(x)
K^{(1)}(x)^{n}dx, \qquad N\in {\mathbb N},~ p\geq 2,
\label{znN-est}
\end{align}
where we used (\ref{Nelson-Hyper}) for the first line.

%Then for any two integers $M\geq N$, we have
Recalling (\ref{3-11}) again, and
repeating the same arguments as above, 
for any two integers $M\geq N$,
we also have
\begin{equation}
%{\lefteqn{ 
\big \Vert \Phi_{n,M\Lambda}-\Phi_{n,N\Lambda} \big \Vert_{L^{p}(\mu_{0};H^{-\alpha}(\mathbb T^{2}))}^{2}
%\int_{E} \big \vert \Phi_{n,M}(z)-\Phi_{n,N}(z) \big \vert_{E}^{2} \mu_{0}(dz)  
%}}
%\nonumber \\
%&\leq &
\leq 4\pi^{2} n! 
(p-1)^{n}
\int_{\mathbb T^{2}} K^{(\alpha)}(x)
\Big( K^{(1)}_{M\Lambda}(x)^{n}-K^{(1)}_{N\Lambda}(x)^{n} \Big)dx.
\label{base}
\end{equation}
%where we used (\ref{3-11}) for the second equality and (\ref{shusoku1}) for the final line.

Now we fix a constant ${\mathfrak p}={\mathfrak p}_{\alpha}>1$ sufficiently small such that 
${\mathfrak p}(1-\alpha)<1$ and let 
${\mathfrak q}={\mathfrak q}_{\alpha}(\gg 1)$ be the
conjugate constant of ${\mathfrak p}$.
By (\ref{period-green-estimate}),
$$ \frac{1}{\mathfrak p}+\frac{1}{n{\mathfrak q}}+\frac{j}{n{\mathfrak q}}+\frac{n-1-j}{n{\mathfrak q}}=1, \qquad 
j=0, 1, \ldots, n-1,$$
the elementary binomial equality 
$ a^{n}-b^{n}=(a-b)(a^{n-1}+a^{n-2}b+\cdots +ab^{n-2}+b^{n-1})$
and the generalized H\"older inequality for $(n+1)$-factors, we have
% yield
\begin{eqnarray}
\lefteqn{ 
\hspace{-4mm}
\int_{{\mathbb T}^{2}} K^{(\alpha)}(x) \big ( K_{M\Lambda}^{(1)}(x)^{n}-K_{N\Lambda}^{(1)}(x)^{n} \big )dx
}
\nonumber \\
& &
\hspace{-7mm}
\leq  
%&\leq & 
\big \Vert K^{(\alpha)}
\big \Vert_{L^{\mathfrak p}(\mathbb T^{2})}
\sum_{j=0}^{n-1} 
\big \Vert (K_{M\Lambda}^{(1)}-K_{N\Lambda}^{(1)}) (K_{M\Lambda}^{(1)})^{j} 
(K_{N\Lambda}^{(1)})^{n-1-j}
\big \Vert_{L^{\mathfrak q}(\mathbb T^{2})}
\nonumber \\
& & 
\hspace{-7mm}
\leq 
\big \Vert K^{(\alpha)}
\big \Vert_{L^{\mathfrak p}(\mathbb T^{2})}
\sum_{j=0}^{n-1}
\big \Vert K_{M\Lambda}^{(1)}-K_{N\Lambda}^{(1)}
\big \Vert_{L^{n{\mathfrak q}}( \mathbb T^{2})}
%\nonumber \\
%&\mbox{  }&
%\qquad
%\times
\big \Vert K_{M\Lambda}^{(1)}
\big \Vert_{L^{n{\mathfrak q}}(\mathbb T^{2})}^{j}
\big \Vert K_{N\Lambda}^{(1)}
\big \Vert_{L^{n{\mathfrak q}}( \mathbb T^{2})}^{n-1-j}
\nonumber \\
& & 
\hspace{-7mm}
\leq 
\big \Vert K^{(\alpha)}
\big \Vert_{L^{\mathfrak p}(\mathbb T^{2})}
\cdot
n
\Big \{ \sum_{k\in \mathbb Z^{2}} \big(\frac{1}{1+\vert k \vert^{2}} \big)^{\frac{n{\mathfrak q}}{n{\mathfrak q}-1}} \Big \}
^{\frac{(n{\mathfrak q}-1)(n-1)}{n{\mathfrak q}}}
\hspace{-1mm}
\Big \{ \sum_{k\in \mathbb Z^{2}} 
{\bf 1}_{M\Lambda \setminus N\Lambda}(k)
\big(\frac{1}{1+\vert k \vert^{2}} \big)^{\frac{n{\mathfrak q}}{n{\mathfrak q}-1}} \Big \}
^{\frac{n{\mathfrak q}-1}{n{\mathfrak q}}}.
\label{K-binomial}
\end{eqnarray}
By summarizing (\ref{base}), (\ref{K-binomial}), and letting $M, N\to \infty$, we have shown that
%that the right-hand side
%of (\ref{Later-Use}) also converges to 0 as $N\to \infty$, and we have shown that 
$\{{\Phi}_{n,N\Lambda}\}_{n=1}^{\infty}$ is a Cauchy sequence in $L^{p}(\mu_{0};H^{-\alpha}(\mathbb T^{2}))$.
%Next, we aim to show that $\{{\Phi}_{n,N}: N\in \mathbb N \}$ is also a Cauchy sequence in $L^{p}(\mu_{0};E)$.
%Let take a positive integer $L$ sufficiently large and 
%
Letting $N\to \infty$ on both sides of (\ref{znN-est}) and combining it with (\ref{sei}), 
we obtain the desired estimate (\ref{zn-est}).

For a general subset $\Lambda \subset \mathbb R^{2}$ satisfying (\ref{Lambda-condition}),
we can find some $r>0$ and $R\in \mathbb N$ such that
%positive constants $r<R$ such that 
$Q(r) \subset \Lambda \subset Q(R)$.
Repeating the same argument above, we obtain
$$ \Vert \Phi_{n, N\Lambda}-\Phi_{n, Q(NR)} \Vert^{2}_{L^{p}(\mu_{0}; H^{-\alpha}({\mathbb T}^{2}))}
\lesssim 
%\leq
\big \Vert K^{(\alpha)}
\big \Vert_{L^{\mathfrak p}(\mathbb T^{2})}
\Big \{ \sum_{[Nr]<\Vert k \Vert \leq NR}
 \big(\frac{1}{1+\vert k \vert^{2}} \big)^{\frac{n{\mathfrak q}}{n{\mathfrak q}-1}} \Big \}
^{\frac{n{\mathfrak q}-1}{n{\mathfrak q}}} \to 0 \quad \mbox { as }N\to \infty.
$$
Thus we have completed the proof of item (1).
%%%%%%%%%%%%%%%%%%%%%%%%%%%%%%%%%%%%%%%%%%
\vspace{2mm} 
%%%%%%%%%%%%%%%%%%%%%%%%%%%%%%%%%%%%%%%%%%

Next, we prove item (2).
Applying Young's inequality for convolution, we have
\begin{align}
\big \Vert 
\Phi_{n, M\Lambda}(\cdot)(g)- 
\Phi_{n,N\Lambda}(\cdot)(g) \big \Vert^{2}_{L^{2}(\mu_{0})}
&=n! \int_{\mathbb T^{2}} \int_{\mathbb T^{2}} \big(
K_{M\Lambda}^{(1)}(x-y)^{n}-
K_{N\Lambda}^{(1)}(x-y)^{n}
\big) g(x)g(y) dxdy
\nonumber \\
&=n! \Big( \big \{
(K_{M\Lambda}^{(1)})^{n}
-
(K_{N\Lambda}^{(1)})^{n} \big \}*g, g \Big)_{L^{2}}
\nonumber \\
&\leq n! \big \Vert
(K_{M\Lambda}^{(1)})^{n}
-
 (K_{N\Lambda}^{(1)})^{n} \big \Vert_{L^{1}} \Vert g \Vert_{L^{2}}^{2}, \qquad M>N.
\nonumber
\end{align}
Then by following the proof of item (1),
we can easily show $L^{p}(\mu_{0})$-convergence of the sequence 
$\{({\Phi}_{n,N\Lambda}(\cdot),g)_{H}; N\in \mathbb N \}$. 
The fact that
$\Phi_{n}(\cdot)(g)\in {\mathcal C}_{n}$ and the estimate
(\ref{zng-est}) are clear.

For $g\in H^{\alpha}(\mathbb T^{2})$, we have
\begin{equation}
({\Phi}_{n,N}(z),g)_{H}=
\subscripts
 {H^{-\alpha}}
{\langle \Phi_{n,N}(z),g \rangle}
{H^{\alpha}}, \quad z\in E, N\in \mathbb N,
\label{nN-dual}
\end{equation}
Then by taking the limit $N\to \infty$ on both sides of (\ref{nN-dual}),
we obtain (\ref{wick-dual}). This completes the proof.
\end{proof}
%%%%%%%%%%%%%%%%%%%%%%%%%%%%%
\begin{pr}
\label{Taylor-expansion}
Let $p\geq 2$ and $0<\alpha \leq 1$. If 
$\vert a \vert< {\sqrt{\frac{4\pi \alpha}{p-1}}}$,
the mapping $\sum_{n=0}^{\infty} \frac{a^{n}}{n!} {\Phi}_{n}(\cdot)$ 
converges as an element of
$L^{p}(\mu_{0};H^{-\alpha}(\mathbb T^{2}))$. Furthermore, for each 
$g\in H$, the function $\sum_{n=0}^{\infty} \frac{a^{n}}{n!} ({\Phi}_{n}(\cdot),g)_{H}$
converges as an element of  
$L^{p}(\mu_{0})$ provided that $\vert a \vert< {\sqrt{\frac{4\pi}{p-1}}}$.
\end{pr}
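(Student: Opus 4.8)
The plan is to bound everything by the a priori estimates (\ref{zn-est}) and (\ref{zng-est}) of Proposition \ref{Def-Wick} and then to recognize the resulting series of norms as (essentially) a geometric series. Since $L^{p}(\mu_{0};H^{-\alpha}(\mathbb{T}^{2}))$ and $L^{p}(\mu_{0})$ are Banach spaces, it suffices to prove absolute convergence, i.e.\ that $\sum_{n=0}^{\infty}\frac{|a|^{n}}{n!}\,\Vert\Phi_{n}\Vert_{L^{p}(\mu_{0};H^{-\alpha}(\mathbb{T}^{2}))}<\infty$ (the $n=0$ term being the constant $1$), and similarly with $\Vert\Phi_{n}(\cdot)(g)\Vert_{L^{p}(\mu_{0})}$ in place of the first norm.

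First I would treat the $H^{-\alpha}$-valued series. Fix $p\geq 2$ and $0<\alpha\leq 1$ with $|a|<\sqrt{4\pi\alpha/(p-1)}$, so that $\kappa:=|a|^{2}(p-1)/(4\pi\alpha)<1$ \emph{strictly}. Inserting (\ref{zn-est}) and splitting the square root via $\sqrt{A+B}\leq\sqrt{A}+\sqrt{B}$, the $n$-th term $\frac{|a|^{n}}{n!}\Vert\Phi_{n}\Vert_{L^{p}(\mu_{0};H^{-\alpha})}$ is dominated by a sum of two pieces. In the first piece the factor $\sqrt{n!}$ from Nelson hypercontractivity, a further $\sqrt{n!}$ arising from the $n!$ inside the braces (which itself comes from the Green-function bound (\ref{sei})), and the Taylor weight $1/n!$ combine to $1$, leaving a purely exponential expression of the form $C_{\alpha}\,(\kappa(1+\varepsilon))^{n/2}$. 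In the second piece only one factor $\sqrt{n!}$ appears against $1/n!$, so this piece has the form $C'\,M(\varepsilon)^{n}/\sqrt{n!}$. Now I would choose $\varepsilon>0$ small enough that $\kappa(1+\varepsilon)<1$, which is possible precisely because $\kappa<1$; then the first piece is summable as a convergent geometric series, while the second piece is summable for every value of $M(\varepsilon)$ since $\sum_{n}M^{n}/\sqrt{n!}<\infty$ for any $M$. This proves the first assertion.

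The second assertion is proved in exactly the same way, using (\ref{zng-est}) in place of (\ref{zn-est}). The only change is that the first term in the braces of (\ref{zng-est}) carries no factor $\alpha^{-2}$ and equals $\big(\tfrac{1+\varepsilon}{4\pi}\big)^{n-1}n!$, so the relevant geometric ratio is $|a|^{2}(p-1)(1+\varepsilon)/(4\pi)$, which can be forced below $1$ by a small $\varepsilon$ exactly under the hypothesis $|a|<\sqrt{4\pi/(p-1)}$; the sub-factorial term is handled verbatim, and one again concludes by absolute convergence in $L^{p}(\mu_{0})$.

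The step I would flag is not a real obstacle but the point that must be watched: the cancellation of factorials. Because $\sqrt{n!}\cdot\sqrt{n!}/n!=1$, the series in $|a|^{2}$ has a \emph{finite} radius of convergence, equal to $4\pi\alpha/(p-1)$ (resp.\ $4\pi/(p-1)$), so the result is sharp within the scale of the estimates of Proposition \ref{Def-Wick}; and the free parameter $\varepsilon$ inherited from those estimates can be absorbed only because the hypothesis on $|a|$ is a \emph{strict} inequality, which leaves room to pick $\varepsilon$ with $\kappa(1+\varepsilon)<1$.
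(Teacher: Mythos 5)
Your proposal is correct and follows essentially the same route as the paper's proof: both reduce to absolute convergence of $\sum_{n}\frac{|a|^{n}}{n!}\Vert\Phi_{n}\Vert$, insert the estimate (\ref{zn-est}) (resp.\ (\ref{zng-est})), split the square root into the two pieces $k(n)+l(n)$, exploit the cancellation $\sqrt{n!}\cdot\sqrt{n!}/n!=1$ to get a geometric term with ratio controlled by $|a|^{2}(p-1)(1+\varepsilon)/(4\pi\alpha)$, and dispose of the remaining super-exponentially decaying piece $M^{n}/\sqrt{n!}$. The paper phrases the conclusion via the ratio test and then lets $\varepsilon\searrow 0$, while you fix $\varepsilon$ small at the outset using the strictness of the hypothesis on $|a|$; these are the same argument.
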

%%%%%%%%%%%%%%%%%%%%%%%%%%%%%%%
\begin{proof} To prove the
%We only give a proof of the 
first assertion,
%$\{ \sum_{n=0}^{N} \frac{a^{n}}{n!} \Phi_{n}(\cdot); N\in \mathbb N\}$ 
%is a convergent in $L^{p}(\mu_{0};H^{-\alpha}(\mathbb T^{2}))$ under 
%$\vert a \vert< {\sqrt{\frac{4\pi \alpha}{p-1}}}$
%we check
%Thus 
it is sufficient to check
\begin{equation} \sum_{n=0}^{\infty} \frac{\vert a \vert^{n}}{n!} \Vert \Phi_{n} \Vert_{L^{p}(\mu_{0};
H^{-\alpha}(\mathbb T^{2}))} <\infty,~~{\mbox{if }}~~~ 
\vert a \vert< 
{\sqrt{ \frac{4\pi \alpha}{p-1} }}.
\label{check}
\end{equation}
Recalling (\ref{zn-est}),
we estimate the summand in (\ref{check}) by
\begin{eqnarray}
%setting $k(n):=
\frac{1}{n!}\Vert \Phi_{n} \Vert_{L^{p}(\mu_{0};
H^{-\alpha}(\mathbb T^{2}))}
&\lesssim &
(p-1)^{n/2} \Big( \frac{1+\varepsilon}{4\pi \alpha} \Big)^{\frac{n-1}{2}}
%
%\nonumber \\
%& \mbox{  }&
+\Big \{ \frac{C(p-1)}{n!} \Big \}^{n/2}
\Big( \frac{\varepsilon}{1+\varepsilon} \Big)^{\frac{n-1}{2}}
=:k(n)+l (n),
% \quad n\in \mathbb N.
\nonumber
\end{eqnarray}
Then by
\begin{equation}
\lim_{n\to \infty} \big( \frac{k(n+1)}{k(n)} \big )={\sqrt{ \frac{4\pi \alpha}{(p-1)(1+\varepsilon)}} }, 
\quad 
\lim_{n\to \infty} \big( \frac{l(n+1)}{l(n)} \big )=0,
\nonumber
\end{equation}
and letting $\varepsilon \searrow 0$,
we see that (\ref{check}) holds. Replacing (\ref{zn-est}) by (\ref{zng-est}) in the above argument,
we obtain the second assertion. This completes the proof.
\end{proof}
%%%%%%%%%%%%%%%%%%%%%%%%%%%%%%%%%%%%%%%%%%
%%%%%%%%%
\subsection{Finite dimensional approximation of the Wick exponential}
%%%%%%%%%%%%%%%%%%%%%%%%%%%%%%%%%%%%%%%%%%%%%%%%%%%%%%%%%%%%%%%%%%%%%%
In this subsection, we construct the Wick exponential mapping 
$\hspace{-1mm} :\hspace{-2mm} {\rm{exp}}(a\cdot) \hspace{-2mm}:$
through the finite dimensional approximation introduced in the previous subsection.

First, we take an arbitrary number $a\in \mathbb R$ and $\Lambda \subset \mathbb R^{2}$ satisfying
(\ref{Lambda-condition}).
%, and for any $N\in \mathbb N$ and $z\in E$,
For any $z\in E$, we define the smooth non-negative function 
$ {\rm exp}_{\Lambda}(az)_{\bullet}: {\mathbb T}^{2} \to [0,\infty)$ by
$$ {\rm exp}_{\Lambda}(az)_{x}:=
%\exp \big \{ -\frac{(a\rho_{N})^{2}}{2} \big \}
\exp \Big \{ a (\Pi_{\Lambda}z)(x) 
-\frac{(a\rho_{\Lambda})^{2}}{2} 
\Big \}, \quad x \in {\mathbb T}^{2},
$$
where $\Pi_{\Lambda}$ is the projection operator defined in (\ref{lambda-projection}).
%We note that $ {\rm exp}_{\Lambda}(az)_{\bullet}$ belongs to $H$.
Substituting $t=a{\rho_{\Lambda}}$ and
$\xi={\rho_{\Lambda}}^{-1}(\Pi_{\Lambda}z)(x)$ into the formula
$$ \exp \big(-\frac{t^{2}}{2}+\xi t \big)=\sum_{n=0}^{\infty} \frac{a^{n}}{\sqrt{n!}} {\mathbb H}_{n}(\xi),
$$
we easily deduce
\begin{equation}
{\rm exp}_{\Lambda}(az)_{x}=\sum_{n=0}^{\infty} \frac{a^{n}}{n!} \Phi_{n, \Lambda}(z)_{x}, 
\quad z\in E, x\in {\mathbb T}^{2},
\label{exp-N-Taylor}
\end{equation}
with $\Phi_{n, \Lambda}(z)_{x}$ as in 
(\ref{phi-def}).
%which also belongs to $H$.
%$L^{2}(\mathbb T^{2})$.  
Furthermore, we have the following.
%%%%%%%%%%%%%%%%%%%%%%%%%%%%%%%%%%%%%%%%%%%%%%%%%%%%%%%%%%%%%%%%%%%%%%
\begin{pr}
\label{pr-pr}
For any $\alpha \geq 0$, we have
%For any $g\in H$, we have 
${\rm exp}_{\Lambda}(a\cdot) \in L^{2}(\mu_{0}; H^{-\alpha}(\mathbb T^{2}))$ and
\begin{equation}
{\mathcal P}_{n} \Big( {\rm exp}_{\Lambda}(a\cdot) \Big)
=\frac{a^{n}}{n!}
 \Phi_{n,\Lambda}(\cdot), \quad n=0,1,2,\ldots,
\label{L2-projection}
\end{equation}
where ${\mathcal P}_{n}
%={\mathcal P}_{n}^{H^{-\alpha}}
:L^{2}(\mu_{0}; H^{-\alpha}(\mathbb T^{2})) \to {\mathcal C}_{n}(
H^{-\alpha}(\mathbb T^{2}))$ is the orthogonal projection onto the Wiener chaos of order $n$.
\end{pr}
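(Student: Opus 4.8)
The plan is to exploit the Taylor expansion \eqref{exp-N-Taylor} together with the chaos decomposition properties already established. First I would note that $\mathrm{exp}_{\Lambda}(az)_x$ is a smooth bounded function of the finite-dimensional Gaussian vector $(\langle z, e_k\rangle)_{k \in \Lambda \cap \mathbb{Z}^2}$ for each fixed $x$, so $\mathrm{exp}_{\Lambda}(a\cdot)_x \in L^2(\mu_0)$ trivially; the point is to control it as an $H^{-\alpha}(\mathbb{T}^2)$-valued random variable. For this I would compute directly
\begin{equation}
\big\Vert \mathrm{exp}_{\Lambda}(a\cdot) \big\Vert^2_{L^2(\mu_0; H^{-\alpha}(\mathbb{T}^2))}
= \sum_{k \in \mathbb{Z}^2} (1+\vert k \vert^2)^{-\alpha} \int_E \widehat{\mathrm{exp}_{\Lambda}(az)_{\cdot}}(k)\, \overline{\widehat{\mathrm{exp}_{\Lambda}(az)_{\cdot}}(k)}\, \mu_0(dz),
\nonumber
\end{equation}
and, using that the pointwise-in-$x$ covariance of $\mathrm{exp}_\Lambda$ is $\exp(a^2 K^{(1)}_\Lambda(x-y)) - 1$ or a similar explicit convolution kernel (obtained from the Gaussian moment generating function), reduce finiteness to integrability of $\exp(a^2 K^{(1)}_\Lambda)$ against $K^{(\alpha)}$. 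Since $\Lambda$ is a fixed bounded set, $K^{(1)}_\Lambda$ is a bounded smooth function, so this integral is plainly finite and the $L^2(\mu_0; H^{-\alpha})$-membership follows with no restriction on $a$.

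Next, for the identification of the chaos projections, I would invoke the Wiener–Itô decomposition $L^2(\mu_0; H^{-\alpha}(\mathbb{T}^2)) = \oplus_{n\ge 0}\mathcal{C}_n(H^{-\alpha}(\mathbb{T}^2))$ recorded just before Proposition \ref{Def-Wick}, and recall from Proposition \ref{Def-Wick} that $\Phi_{n,\Lambda} \in \mathcal{C}_n(H^{-\alpha}(\mathbb{T}^2))$. Therefore the right-hand side $\frac{a^n}{n!}\Phi_{n,\Lambda}$ already lies in the $n$-th chaos. It then suffices to show that the series $\sum_{n} \frac{a^n}{n!}\Phi_{n,\Lambda}$ converges in $L^2(\mu_0; H^{-\alpha}(\mathbb{T}^2))$ to $\mathrm{exp}_{\Lambda}(a\cdot)$; once that is known, uniqueness of the chaos decomposition forces $\mathcal{P}_n(\mathrm{exp}_{\Lambda}(a\cdot)) = \frac{a^n}{n!}\Phi_{n,\Lambda}$. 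Convergence in $L^2$ I would get either by a direct second-moment computation — the partial sums are orthogonal across chaoses and $\sum_n \frac{a^{2n}}{(n!)^2}\Vert\Phi_{n,\Lambda}\Vert^2_{L^2(\mu_0;H^{-\alpha})} = \sum_n \frac{a^{2n}}{n!}\, 4\pi^2 \int_{\mathbb{T}^2} K^{(\alpha)}(x) K^{(1)}_\Lambda(x)^n\,dx < \infty$ because $K^{(1)}_\Lambda$ is bounded (this is the finite-$\Lambda$ analogue of the estimates in the proof of Proposition \ref{Def-Wick}, and is far easier since there is no logarithmic singularity to fight) — or by noting the pointwise identity \eqref{exp-N-Taylor} holds in $L^2(\mu_0)$ for each fixed $x$ by the Hermite generating function, then integrating against test functions and using dominated convergence controlled by the same bound.

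The only mild subtlety, and the step I would be most careful about, is justifying the interchange of the $x$-integration (hidden in the $H^{-\alpha}$-norm, i.e.\ in the Fourier coefficients $\widehat{(\cdot)_\cdot}(k)$) with the infinite sum over $n$ in \eqref{exp-N-Taylor}; this is a Fubini/monotone-convergence argument that goes through precisely because all terms $\Phi_{n,\Lambda}(z)_x$ with the chosen sign of $a$ can be dominated in absolute value by $\Phi_{n,\Lambda}(z)_x$ evaluated with $\vert a\vert$, whose sum is the manifestly integrable $\mathrm{exp}_\Lambda(\vert a\vert z)_x$. Since $\Lambda$ is bounded this causes no trouble at all — in contrast to the limit $\Lambda = N\Lambda \uparrow \mathbb{R}^2$ treated later, where the singularity of $K^{(1)}$ is what restricts $a$. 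I therefore expect the proof to be short: establish the $L^2$ bound, invoke uniqueness of the chaos decomposition, and read off \eqref{L2-projection}.
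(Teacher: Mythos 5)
Your proposal is correct and follows essentially the same route as the paper: an explicit Gaussian second-moment computation (via the moment generating function $\int_E e^{W_f}\,d\mu_0=e^{\|f\|_{H^{-1}}^2/2}$) gives the $L^2(\mu_0;H^{-\alpha})$-membership with no restriction on $a$ because $\Lambda$ is bounded, and then the Hermite generating-function expansion (\ref{exp-N-Taylor}) combined with $\Phi_{n,\Lambda}\in\mathcal C_n(H^{-\alpha}(\mathbb T^2))$ and uniqueness of the Wiener--It\^o decomposition identifies the projections. (The paper simply bounds the $H^{-\alpha}$-norm by the $H$-norm, computing $\int_E\|\exp_\Lambda(az)\|_H^2\,\mu_0(dz)=4\pi^2e^{a^2\rho_\Lambda^2}$; also note $\exp_\Lambda(az)_x$ is not a \emph{bounded} function of the finite-dimensional Gaussian vector, though it is of course in every $L^p(\mu_0)$, which is all you use.)
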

%%%%%%%%%%%%%%%%%%%%%%%%%%%%%%%%%%%%%%%%%%%%%%%%%%%%%%%%%%%%%%%%%%%%%%%
%\vspace{2mm} \\
\noindent
%
%\begin{proof}
{\it{Proof.}}~Recalling
%First, we note that ({\ref{Gauss2}) implies
\begin{equation}
\int_{E} e^{
%\langle z, f \rangle}
W_{f}(z)} 
\mu_{0}(dz)=\exp \big ( \frac{1}{2} \Vert f \Vert_{H^{-1}}^{2} \big), \quad f\in H^{-1}({\mathbb T}^{2}),
\label{Wick-1}
\nonumber
\end{equation}
and 
$\Vert 
\eta_{\Lambda,x} \Vert_{H^{-1}}=1$ ($x\in \mathbb T^{2}$),
we have
%Then (\ref{key-eta}) and (\ref{Wick-1}) imply
%lead us to the estimate
% $\big( {\rm exp}_{N}(a\cdot), g
%\big)_{H} \in L^{2}(\mu_{0})$ as
\begin{eqnarray}
\int_{E}  \Vert {\rm exp}_{\Lambda}(az)  \Vert^{2}_{H} \mu_{0}(dz)  
&=& e^{-(a\rho_{\Lambda})^{2}} 
\int_{\mathbb T^{2}}  \Big(
\int_{E} 
\exp \big ( 2a \rho_{\Lambda} W_{\eta_{\Lambda,x}}(z)  \big ) \mu_{0}(dz) \Big) dx
\nonumber \\
&=& e^{-(a\rho_{\Lambda})^{2}} 
\int_{\mathbb T^{2}}  
\exp \Big( 2a^{2} \rho_{\Lambda}^{2} \Vert \eta_{\Lambda,x}  \Vert_{H^{-1}}^{2} \Big ) dx
=
4\pi^{2} e^{a^{2}\rho_{\Lambda}^{2}},
\label{exp-2moment}
\nonumber
\end{eqnarray}
which means ${\rm exp}_{\Lambda}(a\cdot)\in L^{2}(\mu_{0}; H^{-\alpha}(\mathbb T^{2}))$ ($\alpha \geq 0$).
Combining this and the fact that $\Phi_{n, \Lambda} \in {\mathcal C}_{n}(H^{-\alpha}({\mathbb T}^{2}))$
with (\ref{exp-N-Taylor}), we easily get (\ref{L2-projection}). This completes the proof.
\qed
%\end{proof}
%%%%%%%%%%%%%%%%%%%%%%%%%%%%%%%%
%In the same way as above

%%%%%%%%
As an immediate corollary of Proposition \ref{pr-pr}, we easily obtain
\begin{equation}
\big( {\rm exp}_{\Lambda}(a\cdot), g
\big)_{H}=\sum_{n=0}^{\infty}{\frac{a^{n}}{n!}}( \Phi_{n,\Lambda}(\cdot),g )_{H}  \quad \mbox{ in }L^{2}(\mu_{0}),~g\in H.
\nonumber
\end{equation}
%for any cut-off function $g\in H$.

%%%%%%%%%%%%%%%%%%%%%%%%%%%%%%%%%%%%%%%%%%%%%%%%%%%%%%%%%%%%%
%Now, we are in a position to 
%introduce the Wick exponential mapping rigorously.
The following theorem plays a crucial role in the proof of our main results.
The reader is refered to \cite[Theorem V. 24 and Proposition VIII. 43]{Si} for a similar result
in the case of
space-time cut off quantum fields with interactions of exponential type.
%state the main theorem in this section. 
%The reader is refered to Simon \cite[Proposition VIII. 43]{Si} for a related result
%on space-time cut off quantum fields with exponential interaction.
%%%%%%%%%%%%%%%%%%%%%%%%%%%%%%%%%%%%%%%%%%%%%%%%%%%%%%%%%%%%%%%%%%%%%%%
\begin{tm} \label{Wick-Exp}
Let 
$p\geq 2$, $g\in H$ and
$\Lambda \subset \mathbb R^{2}$ be a subset of $\mathbb R^{2}$ satisfying
{\rm{(\ref{Lambda-condition})}}.
%a symmetric and compact set introduced in the beginning of this section.
%and $a\in (-{\sqrt{4\pi \alpha}}, {\sqrt{4\pi \alpha}})$. 
Then we have the following:
\\
{\rm (1)} 
Let $a\in (-{\sqrt{4\pi \alpha}}, {\sqrt{4\pi \alpha}})$.
Then 
\begin{equation}
\lim_{N \to \infty}
{\rm exp}_{N\Lambda}(a\cdot)
=\sum_{n=0}^{\infty} \frac{a^{n}}{n!} \Phi_{n}(\cdot) \quad in~
L^{2}(\mu_{0};H^{-\alpha}(\mathbb T^{2})).
\label{L2-Wick-conv}
\end{equation}
%the sequence $\{ {\rm exp}_{N}(a\cdot); N\in \mathbb N \}$ converges to a 
%mapping 
%${\rm exp}(\Phi)$ 
%in $L^{2}(\mu_{0};H^{-\alpha}(\mathbb T^{2}))$ as $N \to \infty$. 
We call the right-hand side of 
{\rm{(\ref{L2-Wick-conv})}}
the Wick exponential mapping and 
denote it by $:\hspace{-0.8mm}{\rm{exp}}(az) \hspace{-0.8mm}:\hspace{0.5mm}$
%\hspace{0.1mm}, 
($z\in E$). 
%Moreover, the mapping $:\hspace{-0.5mm}{\rm{exp}}(a\cdot) \hspace{-0.5mm}: \hspace{0.5mm} \in L^{p}(\mu_{0};E)$ 
%for all $1\leq p <1+\frac{\pi}{a^{2}e}$.
\\
{\rm (2)}~Let $a\in (-{\sqrt{4\pi}}, {\sqrt{4\pi}})$.
Then for each $g\in H$, 
\begin{equation}
\lim_{N \to \infty}
\big( {\rm exp}_{N\Lambda}(a\cdot), g \big)_{H}
=\sum_{n=0}^{\infty} \frac{a^{n}}{n!} \Phi_{n}(\cdot)(g) \quad in~
L^{2}(\mu_{0}).
\label{L2g-Wick-conv}
\end{equation}
%the sequence $\{({\rm exp}_{N}(a\cdot),g)_{H}; N\in \mathbb N \}$ converges 
%to a function in $L^{2}(\mu_{0})$ as $N\to \infty$. 
We denote the 
the right-hand side of 
{\rm{(\ref{L2g-Wick-conv})}}
by 
$:\hspace{-0.8mm}{\rm{exp}}(az)\hspace{-0.8mm}: \hspace{-0.5mm}(g)$ ($z\in E$).
Furthermore, if $g\in H^{\alpha}(\mathbb T^{2})$ and 
$a\in (-{\sqrt{4\pi \alpha}}, {\sqrt{4\pi \alpha}})$, we have
\begin{equation}
:\hspace{-0.8mm}{\rm{exp}}(az)\hspace{-0.8mm}:\hspace{-0.5mm}(g)
=
\subscripts
 {H^{-\alpha}}
{\big \langle \hspace{-0.5mm}:\hspace{-0.8mm}{\rm{exp}}(az) \hspace{-0.8mm}: ,g \big \rangle}
{H^{\alpha}},
\quad \mu_{0}\mbox{-a.e. }z\in E.
\nonumber
\end{equation}
{\rm (3)}~If $\vert a \vert< {\sqrt{\frac{4\pi \alpha}{p-1}}}$,
the Wick exponential $:\hspace{-0.8mm}{\rm{exp}}(a\cdot) \hspace{-0.8mm}:$
belongs to $L^{p}(\mu_{0};H^{-\alpha}(\mathbb T^{2}))$.
Furthermore, if 
$\vert a \vert< {\sqrt{\frac{4\pi }{p-1}}}$,
%Besides,  
%under $\vert a \vert< {\sqrt{\frac{4\pi \alpha}{p-1}}}$.
%if $\vert a \vert< {\sqrt{\frac{4\pi }{p-1}}}$ and 
the function
$:\hspace{-0.8mm}{\rm{exp}}(a\cdot) \hspace{-0.8mm}:\hspace{-0.5mm}(g)$ belongs to $L^{p}(\mu_{0})$.
\end{tm}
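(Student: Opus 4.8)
\textbf{Proof proposal for Theorem \ref{Wick-Exp}.}

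The plan is to prove the three items in order, relying on the $L^p$-estimates for the Wick powers $\Phi_n$ and $\Phi_n(\cdot)(g)$ from Proposition \ref{Def-Wick}, together with Proposition \ref{pr-pr} and Proposition \ref{Taylor-expansion}. For item (1), I would proceed as follows. Fix $a \in (-\sqrt{4\pi\alpha}, \sqrt{4\pi\alpha})$; then by Proposition \ref{Taylor-expansion} (with $p=2$) the series $\sum_{n=0}^\infty \frac{a^n}{n!}\Phi_n(\cdot)$ converges absolutely in $L^2(\mu_0; H^{-\alpha}(\mathbb T^2))$. By Proposition \ref{pr-pr}, the $n$-th Wiener chaos component of ${\rm exp}_{N\Lambda}(a\cdot)$ is exactly $\frac{a^n}{n!}\Phi_{n,N\Lambda}(\cdot)$, and by Proposition \ref{Def-Wick}(1) we have $\Phi_{n,N\Lambda} \to \Phi_n$ in $L^2(\mu_0;H^{-\alpha}(\mathbb T^2))$ as $N\to\infty$. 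The strategy is then a standard $3\varepsilon$/dominated-convergence argument in the Hilbert direct sum decomposition $L^2(\mu_0;H^{-\alpha}) = \oplus_n \mathcal C_n(H^{-\alpha})$: using Pythagoras, write
\begin{equation}
\Big\| {\rm exp}_{N\Lambda}(a\cdot) - \sum_{n=0}^\infty \tfrac{a^n}{n!}\Phi_n \Big\|^2_{L^2(\mu_0;H^{-\alpha})}
= \sum_{n=0}^\infty \tfrac{a^{2n}}{(n!)^2}\big\|\Phi_{n,N\Lambda}-\Phi_n\big\|^2_{L^2(\mu_0;H^{-\alpha})}.
\nonumber
\end{equation}
Each summand tends to $0$ as $N\to\infty$, and from the uniform bound \eqref{znN-est} (letting $M\to\infty$) one gets $\|\Phi_{n,N\Lambda}-\Phi_n\|^2_{L^2} \lesssim n!\,(p-1)^n\int K^{(\alpha)}(K^{(1)})^n$ with $p=2$, which is a summable majorant precisely because $|a| < \sqrt{4\pi\alpha}$ (by the ratio computation in the proof of Proposition \ref{Taylor-expansion}, combined with \eqref{sei}). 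Dominated convergence for series then gives \eqref{L2-Wick-conv}.

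For item (2), the argument is the one-dimensional analogue: apply Proposition \ref{pr-pr}'s corollary, namely $({\rm exp}_{N\Lambda}(a\cdot),g)_H = \sum_n \frac{a^n}{n!}(\Phi_{n,N\Lambda}(\cdot),g)_H$ in $L^2(\mu_0)$, use $(\Phi_{n,N\Lambda}(\cdot),g)_H \to \Phi_n(\cdot)(g)$ in $L^2(\mu_0)$ from Proposition \ref{Def-Wick}(2), and the same Pythagoras-plus-summable-majorant argument with the estimate $\|(\Phi_{n,N\Lambda}(\cdot),g)_H\|^2_{L^2} \le n!\,\|(K^{(1)})^n\|_{L^1}\|g\|_{L^2}^2$ implicit in the proof of Proposition \ref{Def-Wick}(2). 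Here the borderline is $|a| < \sqrt{4\pi}$ rather than $\sqrt{4\pi\alpha}$, since the relevant quantity is $\int_{\mathbb T^2}K^{(1)}(x)^n\,dx$ whose $n$-th root behaves like $\tfrac{1}{4\pi}$ (no $\alpha$-gain), matching the constant in \eqref{zng-est}. For the duality identity $:\!{\rm exp}(az)\!:(g) = {}_{H^{-\alpha}}\langle :\!{\rm exp}(az)\!:, g\rangle_{H^\alpha}$ when $g \in H^\alpha(\mathbb T^2)$ and $|a|<\sqrt{4\pi\alpha}$: term by term this is \eqref{wick-dual}, i.e. $\Phi_n(\cdot)(g) = {}_{H^{-\alpha}}\langle :\!z^n\!:, g\rangle_{H^\alpha}$; summing over $n$ and using that both series converge (in $L^2(\mu_0)$ and in $L^2(\mu_0;H^{-\alpha})$ respectively, the latter composed with the continuous pairing $\langle\,\cdot\,,g\rangle_{H^\alpha}$) yields the claim by passing $N\to\infty$ in the finite-dimensional identity \eqref{nN-dual}.

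For item (3), the $L^p$-statement for $p\ge 2$ is immediate from Proposition \ref{Taylor-expansion}: the condition $|a| < \sqrt{4\pi\alpha/(p-1)}$ (resp. $|a| < \sqrt{4\pi/(p-1)}$) is exactly what guarantees absolute convergence of $\sum_n \frac{a^n}{n!}\Phi_n$ in $L^p(\mu_0;H^{-\alpha})$ (resp. of $\sum_n\frac{a^n}{n!}\Phi_n(\cdot)(g)$ in $L^p(\mu_0)$), and since $:\!{\rm exp}(a\cdot)\!:$ was \emph{defined} as this limit in item (1)--(2) and $L^p \hookrightarrow L^2$, the two limits coincide and the limit lies in $L^p$. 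I expect the only genuinely delicate point to be the bookkeeping in item (1): one must check that the Wiener-chaos expansion of the limit object is well-defined and that the chaos decomposition commutes with the $N\to\infty$ limit — this is where one needs the uniform-in-$N$ bound \eqref{znN-est} to produce a summable majorant independent of $N$, rather than merely termwise convergence. Everything else is a routine consequence of the Nelson hypercontractivity estimate \eqref{Nelson-Hyper} and the Green's-function bounds \eqref{period-green-estimate}--\eqref{sei} already established.
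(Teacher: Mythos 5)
Your proposal is correct and follows essentially the same route as the paper: both arguments rest on the termwise convergence $\Phi_{n,N\Lambda}\to\Phi_{n}$ from Proposition \ref{Def-Wick}, the uniform-in-$N$ bound (\ref{znN-est}) combined with (\ref{sei})/(\ref{check}) to produce a summable majorant justifying the interchange of $\lim_{N}$ with $\sum_{n}$, and Proposition \ref{Taylor-expansion} for item (3). The only cosmetic difference is that you organize the dominated-convergence step via the Pythagoras identity in the chaos decomposition, whereas the paper simply bounds $\sum_{n}\frac{|a|^{n}}{n!}\sup_{N}\Vert\Phi_{n,N\Lambda}\Vert$ and exchanges limit and sum by the triangle inequality; both are valid.
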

%%%%%%%%%%%%%%%%%%%%%%%%%%%%%%%%%%%%%%%%%%%%%%%%%%%%%%%%%%%%%
\vspace{2mm}

%%%%%%%%%%%%%%%%%%%%%%%%%
\begin{proof}
%%%%%%%%%%%%%%%%%%%%%%%%%%%%%%%%%%%%%%%%%%%%%%%%%%%%%%%%%%%%%%
By (\ref{znN-est}) and (\ref{check})
$$ \sum_{n=0}^{\infty} \frac{\vert a \vert^{n}}{n!} \Big( \sup_{N\in {\mathbb N}} \Vert \Phi_{n,N\Lambda} 
 \Vert_{L^{2}(\mu_{0}; H^{-\alpha}(\mathbb T^{2}))} \Big)<\infty,~~\mbox{if }~~
\vert a \vert< 
{\sqrt{4\pi \alpha}}.
$$
Thus we obtain item (1) as
\begin{align}
\lim_{N \to \infty} \exp_{N\Lambda}(a\cdot) &=
\lim_{N \to \infty} \sum_{n=0}^{\infty} \frac{a^{n}}{n!} \Phi_{n, N\Lambda}(\cdot)
\nonumber \\
&=\sum_{n=0}^{\infty} \frac{a^{n}}{n!} \Big ( \lim_{N\to \infty}\Phi_{n, N\Lambda}(\cdot) \Big)
=\sum_{n=0}^{\infty} \frac{a^{n}}{n!} \Phi_{n}(\cdot) 
\quad
\mbox{in}~
L^{2}(\mu_{0};H^{-\alpha}(\mathbb T^{2})).
\nonumber
\end{align}
In the same way, we can prove the former part of item (2).
For $g\in H^{\alpha}(\mathbb T^{2})$, we have
\begin{eqnarray}
{\lefteqn{ \int_{E} 
\big \vert
\subscripts
 {H^{-\alpha}}
{\langle {\rm{exp}}_{N}(az),g \big \rangle}
{H^{\alpha}}
-
\subscripts
 {H^{-\alpha}}
{\big \langle \hspace{-0.5mm}:\hspace{-0.8mm}{\rm{exp}}(az) \hspace{-0.8mm}: ,g \big \rangle}
{H^{\alpha}}
\big \vert^{2} \mu_{0}(dz) }}
\nonumber \\
&\leq & \Vert g \Vert_{H^{\alpha}}^{2}
\int_{E} \big \Vert \hspace{0.5mm} {\rm{exp}}_{N}(az)-
:\hspace{-0.8mm}{\rm{exp}}(az) \hspace{-0.8mm}: \hspace{-0.5mm} \big \Vert_{H^{-\alpha}}^{2} \hspace{0.5mm}
\mu_{0}(dz) 
\to 0 \quad {\mbox{ as }}N\to \infty.
\nonumber
\end{eqnarray}
Hence by taking the limit $N\to \infty$ on both sides of 
\begin{equation}
({\rm{exp}}_{N}(az),g)_{H}=
\subscripts
 {H^{-\alpha}}
{\langle {\rm{exp}}_{N}(az),g \big \rangle}
{H^{\alpha}}, \quad z\in E, N\in \mathbb N,
\label{eN-dual}
\nonumber
\end{equation}
we obtain the latter part of item (2).

Since item (3) is a straightforward consequence of items (1), (2) and 
Proposition \ref{Taylor-expansion},
%Hence 
our proof is complete.
\end{proof}
%%%%%%%%%%%%%%%%%%
\begin{re} In \cite[Theorem 2.2]{HKK}, it is shown that the sequence
$\{ \exp_{2^{N}\Lambda}(az) \}_{N=0}^{\infty}$ converges both in $H^{-\alpha}(\mathbb T^{2})$~
$\mu_{0}$-almost surely and in $L^{2}(\mu_{0}; H^{-\alpha}(\mathbb T^{2}))$ with
$\frac{a^{2}}{4\pi}<\alpha<1$.
\end{re}
%%%%%%%%%%%%%%%%%%%

Finally, we mention a significant property of the Wick exponential, which, however, it seems to be a well-known fact.
%for experts. 
For the sake of completeness, we give a proof.
See also e.g., \cite[Theorem V.24]{Si} and \cite[Corollary 2.3]{HKK}.
%%%%%%%%%%%%%%%%%%%%%%%%%%%%%%%%%%%%%%%%%%
\begin{pr} 
\label{Z-positive}
For all $\vert a \vert <{\sqrt{4\pi}}$, the function
$\exp \big(-\hspace{-0.8mm}:\hspace{-0.8mm}{\rm{exp}}(a\cdot)\hspace{-0.8mm}: 
\hspace{-0.8mm}({\bf 1}_{\mathbb T^{2}}) \big )$
belongs to $L^{\infty}(\mu_{0})$ and 
\begin{equation}
Z^{(a)}_{\sf exp}:=
\int_{E} \exp 
\Big(-\hspace{-0.8mm}:\hspace{-0.8mm}{\rm{exp}}(az)\hspace{-0.8mm}:
\hspace{-0.8mm}( {\bf 1}_{\mathbb T^{2}} ) \Big) 
\mu_{0}(dz)>0.
\nonumber
\end{equation}
\end{pr}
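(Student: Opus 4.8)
The plan is to prove both assertions of Proposition~\ref{Z-positive} --- the $L^{\infty}(\mu_{0})$-bound on $\exp(-\!:\!\exp(a\cdot)\!:\!({\bf 1}_{\mathbb T^2}))$ and the strict positivity of $Z^{(a)}_{\sf exp}$ --- by exploiting the fact that the Wick exponential is a \emph{limit of non-negative functions}. Indeed, by construction $\exp_{\Lambda}(az)_{x}\geq 0$ for every $z\in E$, $x\in\mathbb T^{2}$, so the integrated quantity $(\exp_{N\Lambda}(a\cdot),{\bf 1}_{\mathbb T^2})_{H}=\int_{\mathbb T^2}\exp_{N\Lambda}(az)_x\,dx\geq 0$. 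By Theorem~\ref{Wick-Exp}(2), for $|a|<\sqrt{4\pi}$ this sequence converges in $L^{2}(\mu_0)$ to $:\!\exp(az)\!:\!({\bf 1}_{\mathbb T^2})$, hence along a subsequence $\mu_0$-a.e.; since a.e.\ limits of non-negative functions are non-negative, we conclude $:\!\exp(az)\!:\!({\bf 1}_{\mathbb T^2})\geq 0$ for $\mu_0$-a.e.\ $z\in E$. Consequently $0\leq\exp(-\!:\!\exp(az)\!:\!({\bf 1}_{\mathbb T^2}))\leq 1$ for $\mu_0$-a.e.\ $z$, which immediately gives membership in $L^{\infty}(\mu_0)$ with norm at most $1$.

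For the strict positivity of $Z^{(a)}_{\sf exp}$, I would argue by contradiction. Since the integrand $\exp(-\!:\!\exp(a\cdot)\!:\!({\bf 1}_{\mathbb T^2}))$ is non-negative $\mu_0$-a.e., if $Z^{(a)}_{\sf exp}=0$ then necessarily $\exp(-\!:\!\exp(az)\!:\!({\bf 1}_{\mathbb T^2}))=0$ for $\mu_0$-a.e.\ $z$, i.e.\ $:\!\exp(az)\!:\!({\bf 1}_{\mathbb T^2})=+\infty$ for $\mu_0$-a.e.\ $z$. But this contradicts the fact, established in Theorem~\ref{Wick-Exp}(2), that $:\!\exp(a\cdot)\!:\!({\bf 1}_{\mathbb T^2})$ is a genuine element of $L^{2}(\mu_0)$ (and in particular finite $\mu_0$-a.e.). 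Hence $Z^{(a)}_{\sf exp}>0$. Alternatively, and perhaps more transparently, one can observe that $\exp(-t)\geq \mathbf{1}_{\{t\leq R\}}\exp(-R)$ for any $R>0$, so
\begin{equation*}
Z^{(a)}_{\sf exp}\geq e^{-R}\,\mu_0\big(z\in E:\ :\!\exp(az)\!:\!({\bf 1}_{\mathbb T^2})\leq R\big),
\end{equation*}
and the latter probability tends to $1$ as $R\to\infty$ by the $L^2(\mu_0)$-integrability (Chebyshev), so it is positive for $R$ large enough; this gives $Z^{(a)}_{\sf exp}>0$ quantitatively.

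\textbf{The main obstacle} is not really a difficulty here but rather a point that must be handled with care: transferring the pointwise (in $z$) non-negativity of the \emph{approximants} $\exp_{N\Lambda}(az)_x$ to the $L^2$-limit. This requires passing to an a.e.-convergent subsequence (which is automatic from $L^2$-convergence) and invoking that the set $\{:\!\exp(az)\!:\!({\bf 1}_{\mathbb T^2})\geq 0\}$ has full $\mu_0$-measure. One should also note that the approximation $(\exp_{N\Lambda}(a\cdot),{\bf 1}_{\mathbb T^2})_H$ is the correct object: by the corollary following Proposition~\ref{pr-pr} (the term-by-term expansion $(\exp_\Lambda(a\cdot),g)_H=\sum_n \tfrac{a^n}{n!}(\Phi_{n,\Lambda}(\cdot),g)_H$) together with Theorem~\ref{Wick-Exp}(2) applied with $g={\bf 1}_{\mathbb T^2}\in H$, the limit is precisely $:\!\exp(az)\!:\!({\bf 1}_{\mathbb T^2})$, so no further identification is needed. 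I expect the entire proof to occupy only a short paragraph once these observations are assembled.
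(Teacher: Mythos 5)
Your proof is correct. The first half (non-negativity of the approximants $(\exp_{N\Lambda}(a\cdot),{\bf 1}_{\mathbb T^{2}})_{H}$, passage to an a.e.-convergent subsequence, hence $:\hspace{-0.5mm}\exp(az)\hspace{-0.5mm}:\hspace{-0.5mm}({\bf 1}_{\mathbb T^{2}})\geq 0$ $\mu_{0}$-a.e.\ and the bound $0\leq \exp(-\hspace{-0.5mm}:\hspace{-0.5mm}\exp(az)\hspace{-0.5mm}:\hspace{-0.5mm}({\bf 1}_{\mathbb T^{2}}))\leq 1$) is exactly the paper's argument. For the strict positivity of $Z^{(a)}_{\sf exp}$ you diverge from the paper: you use finiteness $\mu_{0}$-a.e.\ of the $L^{2}(\mu_{0})$-function $:\hspace{-0.5mm}\exp(a\cdot)\hspace{-0.5mm}:\hspace{-0.5mm}({\bf 1}_{\mathbb T^{2}})$, either by contradiction or via the Chebyshev bound $Z^{(a)}_{\sf exp}\geq e^{-R}\,\mu_{0}(:\hspace{-0.5mm}\exp(az)\hspace{-0.5mm}:\hspace{-0.5mm}({\bf 1}_{\mathbb T^{2}})\leq R)$, whereas the paper applies Jensen's inequality to get
\begin{equation*}
Z^{(a)}_{\sf exp}\;\geq\;\exp\Big(-\int_{E}:\hspace{-0.5mm}\exp(az)\hspace{-0.5mm}:\hspace{-0.5mm}({\bf 1}_{\mathbb T^{2}})\,\mu_{0}(dz)\Big).
\end{equation*}
Both routes rest on the same two ingredients (a.e.\ non-negativity plus integrability of the Wick exponential), and both are valid; Jensen has the minor advantage of producing an explicit lower bound in one line, since the integral on the right equals the zeroth-chaos contribution $4\pi^{2}$ (the higher chaoses having mean zero), while your Chebyshev variant gives a quantitative but less explicit constant. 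Either way the proof occupies only a short paragraph, as you anticipated.
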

%%%%%%%%%%%%%%%%%%%%%%%%%%%%%%%%%%%%%%%
\begin{proof}
Since 
$\big( \exp_{\Lambda}(az), {\bf 1}_{\mathbb T}^{2} \big)_{H} \geq 0$
for every $z\in E$, we easily see
$:\hspace{-0.8mm}{\rm{exp}}(az)\hspace{-0.8mm}: \hspace{-0.5mm} ({\bf 1}_{\mathbb T^{2}})
\geq 0$ for $\mu_{0}$-a.e. $z\in E$. This implies
%the function
%is nonnegative, 
that the function
$\exp \big(-:\hspace{-0.8mm}{\rm{exp}}(az)\hspace{-0.8mm}: \hspace{-0.5mm} ({\bf 1}_{\mathbb T^{2}})
\big )$
belongs to $L^{\infty}(\mu_{0})$. On the other hand,
it follows from Jensen's inequality that
\begin{align}
\int_{E} \exp 
\big(-
:\hspace{-0.8mm}{\rm{exp}}(az)\hspace{-0.8mm}: \hspace{-0.5mm} ({\bf 1}_{\mathbb T^{2}})
\big )
\hspace{0.5mm}
\mu_{0}(dz) 
& \geq  \exp \Big(-\int_{E} 
:\hspace{-0.8mm}{\rm{exp}}(az)\hspace{-0.8mm}: \hspace{-0.5mm} ({\bf 1}_{\mathbb T^{2}})
\hspace{0.5mm}
\mu_{0}(dz) \Big)
\nonumber \\
& \geq  \exp \Big(- \big \Vert 
:\hspace{-0.8mm}{\rm{exp}}(az)\hspace{-0.8mm}: \hspace{-0.5mm} ({\bf 1}_{\mathbb T^{2}})
\big \Vert_{L^{\infty}(\mu_{0})} \big)>0.
\nonumber
\end{align}
This completes the proof.
\end{proof}
%%%%%%%%%%%%%%%%%%%%%%%%%%
%
%%%%%%%%%%%%%%%%%%%%%%%%%%%%%%%%%%%%%%%%%%%%%%%%%%%%%%%%%%%%%
\subsection{Finite dimensional approximation of the Wick trigonometric mappings}
%%%%%%%%%%%%%%%%%%%%%%%%%%%%%%%%%%%%%%%%%%%%%%%%%%%%%%%%%%%%%
%As in the previous subsection, 
In this subsection, we construct the Wick trigonometric mappings 
$:\hspace{-0.8mm} {\rm{cos}}(a\cdot)\hspace{-0.8mm}:$
and 
$:\hspace{-0.8mm} {\rm{sin}}(a\cdot)\hspace{-0.8mm}:$
%under $\vert a \vert < {\sqrt{4\pi}}$ 
for later use.
%by using the several properties on the exponential Wick mapping.

For any $z\in E$ and $a\in \mathbb R$, we define the smooth functions 
$ {\rm cos}_{\Lambda}(az)_{\bullet}$ and ${\rm sin}_{\Lambda}(az)_{\bullet}$ by
%%%%%%%%%%%%%%%%%%
\begin{align}
{\rm{cos}}_{\Lambda}(az)_{x}&:=
\frac{1}{2} \big (
{\rm{exp}}_{\Lambda}({\sqrt{-1}}az)_{x}
+
{\rm{exp}}_{\Lambda}(-{\sqrt{-1}}az)_{x}
\big )
=
%\nonumber \\
%&=
 \cos \big( a (\Pi_{\Lambda}z)(x) \big) \exp \Big\{ \frac{(a\rho_{\Lambda})^{2}}{2}
\Big \},
\nonumber
%\label{cos-intro2}
\\
{\rm{sin}}_{\Lambda}(az)_{x}&:=
\frac{1}{2} \big (
{\rm{exp}}_{\Lambda}({\sqrt{-1}}az)_{x}
-
{\rm{exp}}_{\Lambda}(-{\sqrt{-1}}az)_{x}
\big )
=
%\nonumber \\
%&=
 \sin \big( a (\Pi_{\Lambda}z)(x) \big) \exp \Big\{ \frac{(a\rho_{\Lambda})^{2}}{2}
\Big \}, \quad x\in \mathbb T^{2}.
\nonumber
%\label{sin-intro2}
\end{align}
%%%%%%%%%%%%%%%%%%%%%
Repeating the same argument as in the previous subsection, we easily obtain
the following:
\begin{co} \label{Wick-Tri}
Let 
$p\geq 2$, $g\in H$ and
$\Lambda \subset \mathbb R^{2}$ be a symmetric and compact set
as introduced in the beginning of this section.
%and $a\in (-{\sqrt{4\pi \alpha}}, {\sqrt{4\pi \alpha}})$. 
%Then we have the following:
\\
{\rm (1)} 
Let $a\in (-{\sqrt{4\pi \alpha}}, {\sqrt{4\pi \alpha}})$.
Then 
\begin{align}
\lim_{N \to \infty}
{\rm cos}_{N\Lambda}(a\cdot)
&=\sum_{n=0}^{\infty} (-1)^{n} \frac{a^{2n}}{(2n)!} \Phi_{2n}(\cdot),
\label{L2-cos-conv}
\\
\lim_{N \to \infty}
{\rm sin}_{N\Lambda}(a\cdot)
&=\sum_{n=0}^{\infty} (-1)^{n} \frac{a^{2n+1}}{(2n+1)!} \Phi_{2n+1}(\cdot)
\label{L2-sin-conv}
\end{align}
in~
$L^{2}(\mu_{0};H^{-\alpha}(\mathbb T^{2}))$.
%the sequence $\{ {\rm exp}_{N}(a\cdot); N\in \mathbb N \}$ converges to a 
%mapping 
%${\rm exp}(\Phi)$ 
%in $L^{2}(\mu_{0};H^{-\alpha}(\mathbb T^{2}))$ as $N \to \infty$. 
We call the right-hand side of
{\rm{(\ref{L2-cos-conv})}} and 
{\rm{(\ref{L2-sin-conv})}} 
the Wick cosine mapping and the Wick sine mapping. We
denote them by $:\hspace{-0.5mm}{\rm{cos}}(a\cdot) \hspace{-0.5mm}:\hspace{0.5mm}$
and 
$:\hspace{-0.5mm}{\rm{sin}}(a\cdot) \hspace{-0.5mm}:\hspace{0.5mm}$, respectively.
%\hspace{0.1mm}, 
%($z\in E$). 
%Moreover, the mapping $:\hspace{-0.5mm}{\rm{exp}}(a\cdot) \hspace{-0.5mm}: \hspace{0.5mm} \in L^{p}(\mu_{0};E)$ 
%for all $1\leq p <1+\frac{\pi}{a^{2}e}$.
\\
{\rm (2)}~Let $a\in (-{\sqrt{4\pi}}, {\sqrt{4\pi}})$.
Then for each $g\in H$, 
\begin{align}
\lim_{N \to \infty}
\big ({\rm cos}_{N\Lambda}(a\cdot), g \big)_{H}
&=\sum_{n=0}^{\infty} (-1)^{n} \frac{a^{2n}}{(2n)!} \Phi_{2n}(\cdot)(g),
\label{L2g-cos-conv}
\\
\lim_{N \to \infty}
\big(
{\rm sin}_{N\Lambda}(a\cdot), g \big)_{H}
&=\sum_{n=0}^{\infty} (-1)^{n} \frac{a^{2n+1}}{(2n+1)!} \Phi_{2n+1}(\cdot)(g)
\label{L2g-sin-conv}
\end{align}
in~
$L^{2}(\mu_{0})$.
We denote the 
the right-hand side of 
{\rm{(\ref{L2g-cos-conv})}} and {\rm{(\ref{L2g-sin-conv})}} 
by
$:\hspace{-0.5mm}{\rm{cos}}(a\cdot)\hspace{-0.5mm}: \hspace{-1mm}(g)$ 
and
$:\hspace{-0.5mm}{\rm{sin}}(a\cdot)\hspace{-0.5mm}: \hspace{-1mm}(g)$, respectively. 
Furthermore, if $g\in H^{\alpha}(\mathbb T^{2})$ and 
$a\in (-{\sqrt{4\pi \alpha}}, {\sqrt{4\pi \alpha}})$, we have
\begin{equation}
:\hspace{-0.5mm}{\rm{cos}}(az)\hspace{-0.5mm}:(g)
=
\subscripts
 {H^{-\alpha}}
{\langle :\hspace{-0.5mm}{\rm{cos}}(az) \hspace{-0.5mm}: ,g \rangle}
{H^{\alpha}},
\quad
:\hspace{-0.5mm}{\rm{sin}}(az)\hspace{-0.5mm}:(g)
=
\subscripts
 {H^{-\alpha}}
{\langle :\hspace{-0.5mm}{\rm{sin}}(az) \hspace{-0.5mm}: ,g \rangle}
{H^{\alpha}},
\quad \mu_{0}\mbox{-a.e. }z\in E.
\nonumber
\end{equation}
{\rm (3)}~If $\vert a \vert< {\sqrt{\frac{4\pi \alpha}{p-1}}}$, both
the Wick cosine $:\hspace{-0.5mm}{\rm{cos}}(a\cdot) \hspace{-0.5mm}:$
and the Wick sine $:\hspace{-0.5mm}{\rm{sin}}(a\cdot) \hspace{-0.5mm}:$
belong to $L^{p}(\mu_{0};H^{-\alpha}(\mathbb T^{2}))$.
Furthermore, if 
$\vert a \vert< {\sqrt{\frac{4\pi }{p-1}}}$,
%Besides,  
%under $\vert a \vert< {\sqrt{\frac{4\pi \alpha}{p-1}}}$.
%if $\vert a \vert< {\sqrt{\frac{4\pi }{p-1}}}$ and 
both the functions
$:\hspace{-0.5mm}{\rm{cos}}(a\cdot) \hspace{-0.5mm}:(g)$ 
and
$:\hspace{-0.5mm}{\rm{sin}}(a\cdot) \hspace{-0.5mm}:(g)$ 
also belong to $L^{p}(\mu_{0})$.
\end{co}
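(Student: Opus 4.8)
The plan is to deduce Corollary~\ref{Wick-Tri} from the statements already established for the Wick exponential (Theorem~\ref{Wick-Exp}), together with Proposition~\ref{Def-Wick}, by the elementary substitution $a\mapsto\pm\sqrt{-1}a$. First I would note that, since the Hermite generating function $\exp(\xi t-t^{2}/2)=\sum_{n\ge0}t^{n}\mathbb{H}_{n}(\xi)/\sqrt{n!}$ is entire in $t$, the pointwise identity (\ref{exp-N-Taylor}) stays valid when $a$ is replaced by any complex number. Applying it with $\pm\sqrt{-1}a$ and forming, respectively, the combinations that define ${\rm cos}_{\Lambda}(az)_{\bullet}$ and ${\rm sin}_{\Lambda}(az)_{\bullet}$, one is left with the even-index, resp.\ odd-index, terms of (\ref{exp-N-Taylor}); this gives, for every fixed $\Lambda$ satisfying (\ref{Lambda-condition}), the finite-dimensional expansions
\begin{equation*}
{\rm cos}_{\Lambda}(az)_{x}=\sum_{n=0}^{\infty}(-1)^{n}\frac{a^{2n}}{(2n)!}\Phi_{2n,\Lambda}(z)_{x}, \qquad {\rm sin}_{\Lambda}(az)_{x}=\sum_{n=0}^{\infty}(-1)^{n}\frac{a^{2n+1}}{(2n+1)!}\Phi_{2n+1,\Lambda}(z)_{x},
\end{equation*}
with convergence pointwise in $(z,x)$. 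Because each $\Phi_{n,\Lambda}$ is real-valued and lies in ${\mathcal C}_{n}(H^{-\alpha}(\mathbb{T}^{2}))$, no essential difficulty comes from the complex-valued intermediate quantities, and by the argument of Proposition~\ref{pr-pr} the displayed series are precisely the Wiener chaos decompositions of ${\rm cos}_{\Lambda}(a\cdot)$ and ${\rm sin}_{\Lambda}(a\cdot)$ in $L^{2}(\mu_{0};H^{-\alpha}(\mathbb{T}^{2}))$, the complementary chaos components being zero.

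For item~(1) I would replace $\Lambda$ by $N\Lambda$ and let $N\to\infty$, the crucial point being to interchange the limit with the sum over $n$. Exactly as in the proof of Theorem~\ref{Wick-Exp}(1), the uniform bound on $\sup_{N}\Vert\Phi_{n,N\Lambda}\Vert_{L^{2}(\mu_{0};H^{-\alpha}(\mathbb{T}^{2}))}$ supplied by (\ref{znN-est}) and (\ref{sei}) makes the two series above (with $\Lambda$ replaced by $N\Lambda$) converge in $L^{2}(\mu_{0};H^{-\alpha}(\mathbb{T}^{2}))$ uniformly in $N$ as soon as $\vert a\vert<\sqrt{4\pi\alpha}$ (the even and odd subseries inherit the convergence radius of the full exponential series, since the relevant tails are of geometric type). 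Combining this uniform summability with the termwise convergence $\Phi_{n,N\Lambda}\to\Phi_{n}$ from Proposition~\ref{Def-Wick}(1) yields (\ref{L2-cos-conv}) and (\ref{L2-sin-conv}). Item~(2) goes the same way, starting instead from the expansions of $({\rm cos}_{N\Lambda}(a\cdot),g)_{H}$ and $({\rm sin}_{N\Lambda}(a\cdot),g)_{H}$, invoking Proposition~\ref{Def-Wick}(2) and the bound (\ref{zng-est}) (hence only under the weaker restriction $\vert a\vert<\sqrt{4\pi}$); for $g\in H^{\alpha}(\mathbb{T}^{2})$ the claimed duality identities follow by passing to the limit $N\to\infty$ in the obvious finite-dimensional identities, exactly as in the proof of Theorem~\ref{Wick-Exp}(2). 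Finally, item~(3) is immediate from Proposition~\ref{Taylor-expansion}: restricting the convergent series $\sum_{n}\frac{a^{n}}{n!}\Phi_{n}(\cdot)$, resp.\ $\sum_{n}\frac{a^{n}}{n!}\Phi_{n}(\cdot)(g)$, to its even or odd index terms preserves $L^{p}$-convergence under the same hypotheses $\vert a\vert<\sqrt{4\pi\alpha/(p-1)}$, resp.\ $\vert a\vert<\sqrt{4\pi/(p-1)}$, so $:\!{\rm cos}(a\cdot)\!:$, $:\!{\rm sin}(a\cdot)\!:$ and their $g$-pairings land in the asserted spaces, as in Theorem~\ref{Wick-Exp}(3).

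I expect the only non-routine step to be the interchange of $\lim_{N}$ and $\sum_{n}$ in items~(1)--(2); this is an $\ell^{1}$-type dominated-convergence argument whose single serious input is the uniform-in-$N$ summability, and that input is entirely furnished by Proposition~\ref{Def-Wick}. Everything else is a transcription of the corresponding steps in the proof of Theorem~\ref{Wick-Exp}, which is why the authors state it as a corollary.
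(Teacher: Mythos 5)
Your proof is correct and is essentially the paper's own argument: the paper gives no separate proof of Corollary~\ref{Wick-Tri}, stating only that it follows by ``repeating the same argument as in the previous subsection,'' and your write-up is precisely that repetition --- the even/odd splitting of the expansion (\ref{exp-N-Taylor}) under $a\mapsto\pm\sqrt{-1}\,a$, the interchange of $\lim_{N}$ and $\sum_{n}$ justified by the uniform-in-$N$ bounds (\ref{znN-est}), (\ref{sei}) and (\ref{zng-est}) from Proposition~\ref{Def-Wick}, and Proposition~\ref{Taylor-expansion} for item~(3). Nothing is missing.
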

%%%%%%%%%%%%%%%%%%%%%%%%%%%
%
%
%%%%%%%%%%%%%%%%%%%%%%%%%%

Before closing this subsection, we present the exponential integrability
of the Wick trigonometric functions which plays a crucial role in the
proof of the main result. Although this property was first shown by Fr\"ohlich \cite{F, F2},
we provide a proof for the sake of completeness.
See also \cite{FP, AHk79}.
%%%%%%%%%%%%%%%%%%%%%%%%%%%
\begin{pr} 
\label{Zcos-positive}
For all $\vert a \vert <{\sqrt{4\pi}}$, the function
$\exp \big(-\hspace{-0.8mm}:\hspace{-0.8mm}{\rm{cos}}(a\cdot)\hspace{-0.8mm}: 
\hspace{-0.8mm}({\bf 1}_{\mathbb T^{2}}) \big )$
belongs to 
$L^{\infty-}(\mu_{0})$. Namely, for
any $p\geq 1$,
\begin{equation}
\int_{E} \exp 
\Big(-p \hspace{-0.8mm}:\hspace{-0.8mm}{\rm{cos}}(az)\hspace{-0.8mm}:
\hspace{-0.8mm}( {\bf 1}_{\mathbb T^{2}} ) \Big) 
\hspace{0.5mm}
\mu_{0}(dz)<\infty.
\label{SG-exp-int}
%\nonumber
\end{equation}
Furthermore, we have
\begin{equation}
Z^{(a)}_{\sf cos}:=\int_{E} \exp 
\big(-\hspace{-0.8mm}:\hspace{-0.8mm}{\rm{cos}}(az)\hspace{-0.8mm}:
\hspace{-0.8mm}( {\bf 1}_{\mathbb T^{2}} ) \big) 
\hspace{0.5mm}
\mu_{0}(dz) >0.
\label{SG-exp-int2}
%\nonumber
\end{equation}
If we replace 
$\hspace{-0.8mm}:\hspace{-0.8mm}{\rm{cos}}(a\cdot)\hspace{-0.8mm}: 
\hspace{-0.8mm}({\bf 1}_{\mathbb T^{2}})$ by
$\hspace{-0.8mm}:\hspace{-0.8mm}{\rm{sin}}(a\cdot)\hspace{-0.8mm}: 
\hspace{-0.8mm}({\bf 1}_{\mathbb T^{2}})$,
these properties still hold.
\end{pr}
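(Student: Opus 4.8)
The positivity statements \eqref{SG-exp-int2} are elementary once \eqref{SG-exp-int} is available with $p=1$. Indeed, by Corollary \ref{Wick-Tri} one has ${:}\cos(az){:}({\bf 1}_{\mathbb T^2})=\sum_{n\ge 0}(-1)^{n}\tfrac{a^{2n}}{(2n)!}\Phi_{2n}(z)({\bf 1}_{\mathbb T^2})$ in $L^{1}(\mu_0)$, and only the summand $n=0$, equal to $\Phi_0(z)({\bf 1}_{\mathbb T^2})\equiv|\mathbb T^2|=4\pi^{2}$, has nonzero expectation; hence Jensen's inequality gives
\[
Z^{(a)}_{\sf cos}\ \ge\ \exp\Big(-\int_E {:}\cos(az){:}({\bf 1}_{\mathbb T^2})\,\mu_0(dz)\Big)\ =\ e^{-4\pi^{2}}\ >\ 0 .
\]
Since ${:}\sin(az){:}({\bf 1}_{\mathbb T^2})\in\bigoplus_{n\ge1}\mathcal C_{2n-1}$ has mean zero, the same computation gives $Z^{(a)}_{\sf sin}\ge 1>0$.

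The real point is the integrability \eqref{SG-exp-int}, and the plan is to reduce it to the Wick exponential of an \emph{imaginary} charge. Set $Y:={:}\exp(\sqrt{-1}\,az){:}({\bf 1}_{\mathbb T^2})={:}\cos(az){:}({\bf 1}_{\mathbb T^2})+\sqrt{-1}\,{:}\sin(az){:}({\bf 1}_{\mathbb T^2})$, a complex-valued function on $E$. Then $-p\,{:}\cos(az){:}({\bf 1}_{\mathbb T^2})\le p\,|Y|$ and $-p\,{:}\sin(az){:}({\bf 1}_{\mathbb T^2})\le p\,|Y|$, so both versions of \eqref{SG-exp-int} follow from
\begin{equation}
\int_E \exp\big(p\,|Y|\big)\,\mu_0(dz)<\infty\qquad\text{for all }p\ge 1,
\label{plan-Yexp}
\end{equation}
and it is here that $|a|<\sqrt{4\pi}$ enters. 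To prove \eqref{plan-Yexp} I would work with the finite-dimensional approximations $Y_N:=({\rm exp}_{N\Lambda}(\sqrt{-1}\,az),{\bf 1}_{\mathbb T^2})_H=({\rm cos}_{N\Lambda}(az),{\bf 1}_{\mathbb T^2})_H+\sqrt{-1}\,({\rm sin}_{N\Lambda}(az),{\bf 1}_{\mathbb T^2})_H$, which converge to $Y$ in $L^{2}(\mu_0;\mathbb C)$ by Corollary \ref{Wick-Tri}(2); by Fatou's lemma it then suffices to bound $\int_E \exp(p|Y_N|)\,\mu_0(dz)$ uniformly in $N$, and for this I would control the even moments $\int_E|Y_N|^{2k}\,\mu_0(dz)$ uniformly in $N$ and $k$.

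These moments have an explicit \emph{neutral Coulomb-gas} form: writing $C_N(x-y):=K^{(1)}_{N\Lambda}(x-y)={\mathbb E}^{\mu_0}[(\Pi_{N\Lambda}z)(x)(\Pi_{N\Lambda}z)(y)]$, a Wick-ordering computation in which the divergent self-energies $e^{\pm k(a\rho_{N\Lambda})^{2}}$ cancel gives
\[
\int_E|Y_N|^{2k}\,\mu_0(dz)=\int_{(\mathbb T^2)^{2k}}\exp\Big(-a^{2}\Big[\sum_{i<i'}C_N(y_i-y_{i'})+\sum_{j<j'}C_N(y'_j-y'_{j'})-\sum_{i,j}C_N(y_i-y'_j)\Big]\Big)\,dy\,dy'.
\]
After the standard logarithmic control of $C_N$ (uniform in $N$; cf.\ \eqref{period-green-estimate} and the appendix), the ``same-charge'' pairs contribute the subunit factors $|y_i-y_{i'}|_{\mathbb R^2}^{a^{2}/2\pi}$ and the ``opposite-charge'' pairs the singular factors $|y_i-y'_j|_{\mathbb R^2}^{-a^{2}/2\pi}$; since $a^{2}/2\pi<2$, each such singularity is locally integrable in $\mathbb R^{2}$, and a check of the multifractal exponents shows the whole integral is finite for every $k$. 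The classical stability estimate for the two-dimensional neutral logarithmic gas (Fr\"ohlich \cite{F, F2}) furthermore yields $\int_E|Y_N|^{2k}\,\mu_0(dz)\le C^{k}(k!)^{\theta}$ with some $\theta=\theta(a)<2$, uniformly in $N$. Substituting this into $\int_E\exp(p|Y_N|)\,\mu_0(dz)\lesssim\sum_{k\ge0}\tfrac{p^{2k}}{(2k)!}\int_E|Y_N|^{2k}\,\mu_0(dz)$ (split the exponential series into even and odd parts and use $|Y_N|^{2k+1}\le\tfrac12(|Y_N|^{2k}+|Y_N|^{2k+2})$) and using $(k!)^{\theta}/(2k)!\to0$ faster than any geometric rate because $\theta<2$, one obtains a bound uniform in $N$, and then \eqref{plan-Yexp} by Fatou.

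I expect the main obstacle to be precisely this growth bound for the neutral Coulomb-gas integrals. When $a^{2}$ is small one is in an ``$L^{2}$-phase'' and very crude estimates work, but as $a^{2}\uparrow 4\pi$ one cannot afford to discard the same-charge factors $|y_i-y_{i'}|^{a^{2}/2\pi}$ --- replacing them by $1$ would cost a factor $C^{k^{2}}$ --- so one must genuinely exploit the short-distance repulsion they encode. This is the content of Fr\"ohlich's argument (grouping the $2k$ charges into neutral dipoles, using the stability inequality $-\sum_{\text{pairs}}\varepsilon_m\varepsilon_n\log|y_m-y_n|\ge -Bk-(\text{integrable local singularities})$, and an induction on $k$), cf.\ also \cite{Si, FP, AHk79}. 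Granting it, the remaining steps --- Jensen's inequality, the domination by $|Y|$, the Coulomb-gas identity, the uniform-in-$N$ control of $C_N$, and the summation --- are routine, and, since ${:}\sin(az){:}({\bf 1}_{\mathbb T^2})=\mathrm{Im}\,Y$, the sine model needs no separate discussion.
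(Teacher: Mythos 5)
Your proposal is correct and follows essentially the same route as the paper: both reduce the exponential integrability to uniform-in-$N$ bounds on the even moments $\int_E(\Psi_N^{+}\Psi_N^{-})^{k}\,d\mu_0$ of the imaginary Wick exponential, identify these as neutral two-dimensional Coulomb-gas integrals, and invoke the Fr\"ohlich/Deutsch--Lavaud stability bound $C^{k}(k!)^{\theta}$ with $\theta=\max\{a^{2}/2\pi,1\}<2$ before summing the series. The only cosmetic differences are that the paper dominates $e^{-px}$ by $2\cosh(px)$ and concludes via uniform integrability rather than via the bound $-p\,{:}\cos(az){:}({\bf 1}_{\mathbb T^2})\le p|Y|$ and Fatou, while your Jensen step for the positivity, using $\int_E{:}\cos(az){:}({\bf 1}_{\mathbb T^2})\,\mu_0(dz)=4\pi^{2}$, is the correct adaptation of the argument of Proposition \ref{Z-positive}.
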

%%%%%%%%%%%%%%%%%%%%%%%%%%%%%
\proof 
Since (\ref{SG-exp-int2}) can be shown in the same way as in the proof of
Proposition \ref{Z-positive}, we aim to prove (\ref{SG-exp-int}).
First, we fix $\Lambda \subset \mathbb R^{2}$ satisfying (\ref{Lambda-condition}) and
set $\Psi^{+}_{N}(z)=\big( {\rm{exp}}_{N\Lambda}({\sqrt{-1}}az), {\bf 1}_{\mathbb T^{2}} 
\big)_{H}$ and
$\Psi^{-}_{N}(z)=\big( {\rm{exp}}_{N\Lambda}(-{\sqrt{-1}}az), {\bf 1}_{\mathbb T^{2}} 
\big)_{H}$.
Noting ${\overline{\Psi^{+}_{N}(z)}}=\Psi^{-}_{N}(z)$ and $\big(\cos_{N\Lambda}(az), {\bf 1}_{\mathbb T^{2}}
\big)_{H}=\frac{1}{2} \big(  \Psi^{+}_{N}(z)+\Psi^{-}_{N}(z) \big)$, we have
\begin{align}
\int_{E}
\exp 
\Big \{-p \big( {\rm{cos}}_{N\Lambda}(az), {\bf 1}_{\mathbb T^{2}}  \big)_{H}\Big \} 
\hspace{0.5mm}
\mu_{0}(dz)
& \leq 2 \int_{E} {\rm cosh}
\Big \{p \big( {\rm{cos}}_{N\Lambda}(az), {\bf 1}_{\mathbb T^{2}}  \big)_{H}\Big \} \mu_{0}(dz)
\nonumber \\
&=2 \sum_{n=0}^{\infty} \frac{ p^{2n}}{(2n)!}
\int_{E} 
\big( {\rm{cos}}_{N\Lambda}(az), {\bf 1}_{\mathbb T^{2}}  \big)_{H}^{2n} \mu_{0}(dz)
\nonumber \\
&=2
 \sum_{n=0}^{\infty} \frac{ p^{2n}}{(2n)!}
\int_{E} 
\Big \{ \frac{1}{2}
\big(\Psi_{N}^{+}(z)+\Psi_{N}^{-}(z) \big) \Big \}^{2n} \mu_{0}(dz)
\nonumber \\
&\leq 
2
 \sum_{n=0}^{\infty} \frac{ p^{2n}}{(2n)!}
\int_{E} 
\frac{1}{2^{2n}}
\sum_{m=0}^{2n}
\binom{2n}{m}
\vert \Psi_{N}^{+}(z) \vert^{m} \vert \Psi_{N}^{-}(z) \vert^{2n-m}  \mu_{0}(dz)
\nonumber \\
&\leq 
2
 \sum_{n=0}^{\infty} \frac{ p^{2n}}{(n!)^{2}}
\int_{E} 
\big(\Psi_{N}^{+}(z) \big)^{n}\big(\Psi_{N}^{-}(z) \big)^{n} \mu_{0}(dz),
\label{Psi-estimate}
\end{align}
where we used 
$$\vert \Psi^{+}_{N}(z) \vert^{m} 
\vert \Psi^{-}_{N}(z) \vert^{2n-m}=\vert \Psi^{+}_{N}(z) \vert^{2n}= 
\big( \Psi^{+}_{N}(z) 
\Psi^{-}_{N}(z) \big)^{n}, \quad m=0,\ldots, 2n$$
and the identities $\sum_{m=0}^{2n} \binom{2n}{m}=2^{2n}$ and $(2n)!\geq (n!)^{2}$ for 
the final line in the above estimate.

For given $i\in \{1,\ldots, 2n \}$, we set $\varepsilon_{i}=1$ if $i$ is even and
$\varepsilon_{i}=-1$ if $i$ is odd.
Then 
%we can continue as
\begin{align}
\int_{E}
&
\big(\Psi_{N}^{+}(z) \big)^{n}\big(\Psi_{N}^{-}(z) \big)^{n} \mu_{0}(dz)
\nonumber \\
&
=\exp \big( a^{2} \rho_{N\Lambda}^{2} \big) \int_{(\mathbb T^{2})^{2n}}dx_{1} \cdots 
%\int_{\mathbb T^{2}}
dx_{2n} 
%\int_{E} \mu_{0}(dz)
%\nonumber \\
%&\mbox{ }
\int_{E} \exp \Big \{ {\sqrt{-1}}a \sum_{i=1}^{n}
\Big (
(\Pi_{N\Lambda}z)(x_{2i})-
%\sum_{i=n+1}^{2n}
(\Pi_{N\Lambda}z)(x_{2i-1})
\Big)
\Big \} \mu_{0}(dz)
\nonumber \\
&= \exp \big( a^{2} \rho_{N\Lambda}^{2} \big) 
\int_{(\mathbb T^{2})^{2n}}
\exp \Big(-\frac{(a\rho_{N\Lambda})^{2}}{2} \Big \Vert \sum_{i=1}^{n} \big(
\eta_{N\Lambda, x_{2i}}
-
%\sum_{i=1}^{n}
\eta_{N\Lambda, x_{2i-1}}
\big)
\Big \Vert_{H^{-1}}^{2} \Big) 
dx_{1} \cdots 
%\int_{\mathbb T^{2}}
dx_{2n} 
\nonumber \\
&=
\int_{(\mathbb T^{2})^{2n}}
\exp \Big ( -a^{2}\sum_{1\leq i <j \leq 2n} \varepsilon_{i} \varepsilon_{j} K^{(1)}_{N\Lambda} (x_{i}-x_{j})
%\Big( K_{N\Lambda}(x_{2i},x_{2j})+K_{N\Lambda}(x_{2i-1}, x_{2j-1})
%-K(x_{2i-1},x_{2j}) \Big)
\Big )
dx_{1} \cdots 
dx_{2n} 
\nonumber \\
&\leq 
\int_{(\mathbb T^{2})^{2n}}
\exp \Big ( -a^{2}\sum_{1\leq i <j \leq 2n} \varepsilon_{i} \varepsilon_{j} K^{(1)}(x_{i}-x_{j})
%\Big( K_{N\Lambda}(x_{2i},x_{2j})+K_{N\Lambda}(x_{2i-1}, x_{2j-1})
%-K(x_{2i-1},x_{2j}) \Big)
\Big )
dx_{1} \cdots 
dx_{2n}, 
\label{stop-estimate}
\end{align}
where we used that the Green functions $K^{(1)}_{N\Lambda}$ and $K^{(1)}$ satisfies
$K^{(1)}_{N\Lambda} \geq K^{(1)}$ as quadratic forms on $\mathbb T^{2}$ for the final line.
(Note that the final line can be regarded as the {\it{conditioning property}} mentioned in \cite[Lemma IV.3]{F2}.)
%%%%%%%
Recalling (\ref{kasaneawase}) and \cite[(4.2)]{AS61} (see also Proposition \ref{Green} below), we obtain
\begin{equation}
K^{(1)}(x)=-\frac{1}{2\pi} \log \vert x \vert_{\mathbb R^{2}}+R(x), \quad x\in {\mathbb T}^{2} \setminus \{0 \},
\label{AS42-61}
\end{equation}
for some smooth function $R$.
Combining (\ref{AS42-61}) with
\cite[Lemma 2.1]{F}, we also have
\begin{align}
%\lim_{N\to \infty}
%&
%\int_{(\mathbb T^{2})^{2n}}
%\exp \Big ( -a^{2}\sum_{1\leq i <j \leq 2n} \varepsilon_{i} \varepsilon_{j} K^{(1)}_{D(N)} (x_{i}-x_{j})
%\Big )dx_{1} \cdots dx_{2n} 
%\nonumber \\ &= 
%\exp({2a^{2}Cn})
%\nonumber \\
%& \lesssim
\int_{(\mathbb T^{2})^{2n}}
&
\exp \Big ( -a^{2}\sum_{1\leq i <j \leq 2n} \varepsilon_{i} \varepsilon_{j} K^{(1)}(x_{i}-x_{j})
%\Big( K_{N\Lambda}(x_{2i},x_{2j})+K_{N\Lambda}(x_{2i-1}, x_{2j-1})
%-K(x_{2i-1},x_{2j}) \Big)
\Big )
%\prod_{1\leq i <j \leq 2n} \Big( \vert x_{i}-x_{j} \vert_{\mathbb R^{2}} +(2N)^{-1} \Big)^{\varepsilon_{i} \varepsilon_{j} a^{2}/2\pi}
dx_{1} \cdots 
dx_{2n} \nonumber \\
&\leq
\exp({2a^{2}Cn}) 
\int_{(\mathbb T^{2})^{2n}}
\prod_{1\leq i <j \leq 2n} \vert x_{i}-x_{j} \vert_{\mathbb R^{2}}^{\varepsilon_{i} \varepsilon_{j} a^{2}/2\pi}
dx_{1} \cdots 
dx_{2n}. 
\label{dipole}
\end{align}
Furthermore, it follows from \cite[(1.5), (2.7) and (2.8)]{DL74} that
\begin{align}
\int_{(\mathbb T^{2})^{2n}}
&
\prod_{1\leq i <j \leq 2n} \vert x_{i}-x_{j} \vert_{\mathbb R^{2}}^{\varepsilon_{i} \varepsilon_{j} a^{2}/2\pi}
dx_{1} \cdots 
dx_{2n}
\nonumber \\
&\leq 
(4\pi^{2})^{2n+\frac{a^{2}}{4\pi}\sum_{1\leq i<j\leq 2n}\varepsilon_{i}\varepsilon_{j}}
(n!)^{\max \{a^{2}/2\pi,  1\}}(Q_{1}^{*})^{n}=C^{n}
(n!)^{\max \{a^{2}/2\pi,  1\}},
\label{DS74}
\end{align}
where the positive constant $Q_{1}^{*}$ is given by
\begin{equation}
Q_{1}^{*}=\frac{\pi^{2} \Gamma(2-a^{2}/2\pi)}{(1-a^{2}/8\pi)(1-a^{2}/4\pi)\Gamma (1-
a^{2}/4\pi)\Gamma (2-a^{2}/4\pi)}. 
%\quad \mbox{with }~q=\frac{a^{2}}{8\pi}.
\nonumber
\end{equation}

Inserting (\ref{stop-estimate}), (\ref{dipole}) and (\ref{DS74}) into (\ref{Psi-estimate}), we obtain
\begin{equation}
\sup_{N\in \mathbb N}
\int_{E}
\exp 
\Big (-p 
%\hspace{-0.8mm}:\hspace{-0.8mm}
%\cos(az)
%\hspace{-0.8mm}:\hspace{-0.8mm}
%({\bf 1}_{\mathbb T}^{2})
\big( {\rm{cos}}_{N\Lambda}(az), {\bf 1}_{\mathbb T^{2}}  \big)_{H}
\Big ) 
\hspace{0.5mm}
\mu_{0}(dz)
\leq
\sum_{n=0}^{\infty}\frac{C^{n}}{(n!)^{\min \{2-a^{2}/2\pi, 1\}}} (p^{2})^{n}.
\label{final-sG-estimate}
\end{equation}
Noting that the condition $\vert a \vert <4\pi$ implies $2-a^{2}/2\pi>0$,
we see that the right-hand side of (\ref{final-sG-estimate}) converges absolutely for every $p\geq 1$.
This estimate implies that $\big \{ \exp  
\big (-p 
%\hspace{-0.8mm}:\hspace{-0.8mm}
%\cos(az)
%\hspace{-0.8mm}:\hspace{-0.8mm}
%({\bf 1}_{\mathbb T}^{2})
\big( {\rm{cos}}_{N\Lambda}(a\cdot), {\bf 1}_{\mathbb T^{2}}  \big)_{H}
\big )  
\big \}_{N\in \mathbb N}$ is uniformly integrable.
Hence by combining this property with Corollary \ref{Wick-Tri},
we finally obtain the desired estimate (\ref{SG-exp-int}).
%Hence we complete the proof.
%%%%%%%%%%
\qed
%%%%%%%%%%%%%%%%%%
\section{Proof of main results}
%%%%%%%%%%%%%%%%%%%%%%%%%%%%%%%%%%%%%%%%%%%%%%%%%%%%%%%%%%%%%%%%%%%%%%%
In this section, we 
only consider the case $\mu=\mu^{(a)}_{\cos}$ and 
give a proof of Proposition \ref{IbP} and Theorem \ref{ES}.
%In the case of
%in the case of $\mu=\mu^{(a)}_{\cos}$. The proof
%In the case of 
%$\mu=\mu^{(a)}_{\exp}$, 
Since we have
$\exp (-\hspace{-0.8mm}:\hspace{-0.8mm} \exp (a\cdot) \hspace{-0.8mm}:\hspace{-0.5mm} ({\bf 1}_{\mathbb T^{2}})) 
\in L^{\infty}(\mu_{0})$ (see Proposition \ref{Z-positive}), which is a stronger property than 
$\exp (-:\hspace{-0.8mm} \cos (a\cdot) \hspace{-0.8mm}:\hspace{-0.5mm} ({\bf 1}_{\mathbb T^{2}})) 
\in L^{\infty-}(\mu_{0})$ (see Proposition \ref{Zcos-positive}),
the proof in the case of 
$\mu=\mu^{(a)}_{\exp}$
goes in a very similar way with only slight modifications.
\noindent
\begin{proof}[{\bfseries Proof of Proposition \ref{IbP}}]
Applying a standard Gaussian integration by parts formula (cf. \cite[page 207]{GJ}), we easily have
\begin{align}
\int_{E}
& \big(D_{H}F(z),\varphi \big)_{H} 
\frac{1}{Z^{(a)}_N} 
\exp \Big(-\big(\cos_{D(N)}(az), {\bf 1}_{\mathbb T^{2}} \big)_{H} \Big)
\hspace{0.5mm}
\mu_{0}(dz)
\nonumber \\
&=
\int_{E}F(z)\Big \{ \langle z, (1-\Delta)\varphi \rangle
+a \big(\sin_{D(N)}(az), \varphi \big)_{H}
%-b(\mu; \varphi)(z) 
\Big \}
\frac{1}{Z^{(a)}_N} 
\exp \Big(-\big(\cos_{D(N)}(az), {\bf 1}_{\mathbb T^{2}} \big)_{H} \Big)
\hspace{0.5mm}
\mu_{0}(dz)
%, \quad F\in {\mathfrak F}C_{b}^{\infty},
%\varphi \in K,
\label{IbP-beta}
\end{align}
for all $F\in {\mathfrak F}C_{b}^{\infty}, \varphi \in K$ and $N\in \mathbb N$, where
$$ 
Z^{(a)}_N:=\int_{E} \exp \Big(-\big(\cos_{D(N)}(az), {\bf 1}_{\mathbb T^{2}} \big)_{H} \Big)
\mu_{0}(dz).
$$
Combining Corollary \ref{Wick-Tri} with 
Proposition \ref{Zcos-positive} (in particular,
(\ref{final-sG-estimate})),
%Proposition \ref{Zcos-positive}, 
we can take a
subsequence $\{N(j) \}_{j=1}^{\infty}$ with $N(j) \nearrow \infty$ as $j \to \infty$ such that
$$\lim_{j\to \infty} \big(\cos_{D(N)}(az), \varphi \big)_{H}=
\hspace{0.8mm}:\hspace{-0.8mm} \cos(az) \hspace{-0.8mm}:\hspace{-0.5mm}(\varphi),
\quad 
\lim_{j\to \infty} \big(\sin_{D(N)}(az), \varphi \big)_{H}=
\hspace{0.8mm}:\hspace{-0.8mm} \sin(az) \hspace{-0.8mm}:\hspace{-0.5mm}(\varphi),
\quad \mu_{0}\mbox{-a.s. }z\in E,
$$
and $\lim_{j\to \infty}Z_{N}^{(a)}=Z_{\sf cos}^{(a)}$ provided that $\vert a \vert <{\sqrt{4\pi}}$.
Thus, taking the limit $j \to \infty$ on both sides of 
(\ref{IbP-beta}), we obtain
%%%%%%%%%%%%%%%%
$$
\int_{E}
 \big(D_{H}F(z),\varphi \big)_{H} 
 \hspace{0.5mm}
\mu^{(a)}_{\sf cos}(dz)
\nonumber \\
=
\int_{E}F(z)\Big \{ \langle z, (1-\Delta)\varphi \rangle
+a :\hspace{-0.8mm} \sin(az) \hspace{-0.8mm}:\hspace{-0.5mm}(\varphi) 
%-b(\mu; \varphi)(z) 
\Big \}
\hspace{0.5mm}
\mu^{(a)}_{\sf cos}(dz),
%, \quad F\in {\mathfrak F}C_{b}^{\infty},
%\varphi \in K,
%\label{IbP-beta2}
$$
%%%%%%%%%%%%%%%%%
and this leads us to the desired integration by parts formula (\ref{IbP3}).

To show ${\mathcal L}F\in L^{p}(\mu^{(a)}_{\sf cos})$, it is sufficient to check
$:\hspace{-0.8mm} \sin (az) \hspace{-0.8mm}: (\varphi) \hspace{0.5mm} \in L^{p}(\mu^{(a)}_{\sf cos})$.
Applying Corollary \ref{Wick-Tri} and 
Proposition \ref{Zcos-positive} again, we easily see
$:\hspace{-0.8mm} \sin (az) \hspace{-0.8mm}: \hspace{-0.5mm}(\varphi)
 \in L^{p}(\mu^{(a)}_{\sf cos})$,
provided that $\vert a \vert< {\sqrt{\frac{4\pi }{p-1}}}$. This completes the proof.
\end{proof}
%%%%%%%%%%%%%%%%%%%%%%%%%%%%%%%%%%%%%%%%%%
%\vspace{2mm} \\
%{\bf{{Proof of Theorem \ref{ES}.}}}~
\begin{proof}[{\bfseries Proof of  Theorem \ref{ES}}]
First, we introduce 
${\mathcal H}_{+}:=H^{2\gamma+\delta}({\mathbb T^{2}})$ and ${\mathcal H}_{-}:=E=H^{-\delta}(\mathbb T^{2})$.
By virtue of condition (\ref{index-assume}), the embeddings 
${\mathcal H}_{+} \subset {\mathcal H} \subset {\mathcal H}_{-}$ are 
Hilbert-Schmidt. Note that ${\mathcal H}_{+}$ is regarded as the dual space of ${\mathcal H}_{-}$ if 
we identify ${\mathcal H}$ with its dual.
Let $\mbox{ }_{-}(\cdot, \cdot)_{+}$ stand for this dualization between ${\mathcal H}_{-}$ 
and ${\mathcal H}_{+}$. Noting
the identity
$$ \mbox{ }_{-}(z,\varphi)_{+}=(z, \varphi)_{\mathcal H}, \quad \varphi \in {\mathcal H}_{+}, z\in {\mathcal H},$$
%We are going to check the conditions for $L^{p}$-uniqueness of our Dirichlet operator ${\cal L}$.
we may rewrite ${\mathcal L}F$ given in Proposition \ref{IbP} as
\begin{equation}
{\mathcal L}F(z)=\frac{1}{2}\big \{ {\rm Tr}(D_{\mathcal H}^{2}F(z))+ 
\hspace{-2mm}\mbox{ }_{-}\big ( \beta(z), D_{\mathcal H}F(z) \big)_{+} \big \}, \quad F\in {\mathfrak F}C^{\infty}_{b},
\nonumber
\end{equation}
where $\beta(z):=\beta_{\sf{OU}}(z)+\beta_{\sf{cos}}^{(a)}(z)$ is given by
\begin{eqnarray}
\beta_{\sf{OU}}(z)&=&-\sum_{k\in \mathbb Z^{2}} \lambda_{k}^{-\delta/2-\gamma+1} 
\langle z,e_{k} \rangle e_{k}^{(-\delta)}
\nonumber
\\
\beta_{\sf{cos}}^{(a)}(z)&=&a \sum_{k\in \mathbb Z^{2}} \lambda_{k}^{-\gamma/2} \sum_{n=0}^{\infty}
(-1)^{n}
 \frac{a^{2n+1}}{(2n+1)!}
\Phi_{2n+1}(z)(e_{k}) \hspace{0.5mm} e_{k}^{(\gamma)}
\nonumber
\end{eqnarray}

%Now we are going to check a sufficient condition for the drift $\beta$ proposed in \cite{LR} such that $L^{p}$-uniqueness of the Dirichlet operator ${\cal L}$ holds.
Thanks to \cite[Theorem 3]{LR}, it is sufficient to check 
$\beta_{\sf{OU}}\in L^{2p}(\mu^{(a)}_{\sf cos};{\mathcal H}_{-})$ and 
$\beta_{\sf{cos}}^{(a)} \in L^{2p}(\mu^{(a)}_{\sf cos}; {\mathcal H})$ for 
$L^{p}$-uniqueness of the Dirichlet operator ${\mathcal L}$. 
By using the H\"older inequality and remembering the condition
$\delta+2\gamma>2$, we can estimate as follows:
\begin{align} \int_{E} &
\big \Vert \beta_{\sf{OU}}(z) \big \Vert_{E}^{2p} \mu^{(a)}_{\sf cos}(dz)
\nonumber \\
& \leq
{\rm{Tr}}(A^{\delta+2\gamma-1})^{p-1}
%\Lambda(2\alpha+\delta-1)^{p-1}  
\int_{E} 
\sum_{k\in {\mathbb Z}^{2}} \lambda_{k}^{-(\delta+2\gamma-p-1)} \langle z, e_{k} \rangle^{2p}  
\mu^{(a)}_{\sf cos}(dz)
\nonumber \\
& \lesssim 
{\rm{Tr}}(A^{\delta+2\gamma-1})^{p-1}
\sum_{k\in {\mathbb Z}^{2}} \lambda_{k}^{-(\delta+2\gamma-p-1)} \Big( 
\int_{E}  \langle z, e_{k} \rangle^{2p}  
\exp \big( -:\hspace{-0.8mm} \cos (az) \hspace{-0.8mm}:\hspace{-0.5mm} ({\bf 1}_{\mathbb T^{2}}) \big) 
\hspace{0.5mm} \mu_{0}(dz) \Big)
\nonumber \\
& =
{\rm{Tr}}(A^{\delta+2\gamma-1})^{p-1}
\sum_{k\in {\mathbb Z}^{2}} \lambda_{k}^{-(\delta+2\gamma-1)} 
\Big ( \int_{E}  \langle z, 
e^{(-1)}_{k}
 \rangle^{2p}  
\exp \big( -:\hspace{-0.8mm} \cos (az) \hspace{-0.8mm}:\hspace{-0.5mm} ({\bf 1}_{\mathbb T^{2}}) \big) 
\hspace{0.5mm} 
\mu_{0}(dz) \Big )
\nonumber \\
&= {\rm{Tr}}(A^{\delta+2\gamma-1})^{p} \Big
\{ \int_{\mathbb R} x^{4p} \big( \frac{1}{{\sqrt{2\pi}}} e^{-\frac{x^{2}}{2}} dx \big)
\Big \}^{1/2}
\Big (
\int_{E} \exp \big( -2:\hspace{-0.8mm} \cos (az) \hspace{-0.8mm}:\hspace{-0.5mm} ({\bf 1}_{\mathbb T^{2}}) \big) 
\hspace{0.5mm} 
\mu_{0}(dz)
\Big)^{1/2}
<\infty,
\nonumber
\end{align}
where we used 
%that the real-valued random variable
$ \langle z, e^{(-1)}_{k} \rangle \sim N(0,1)$ ($k\in \mathbb Z^{2}$) for the final line.

On the other hand, by using the triangle inequality and recalling Proposition
\ref{Zcos-positive},
we obtain
\begin{align}
%\lefteqn{ 
\Big( \int_{E} 
\big \Vert \beta_{\sf cos}^{(a)}(z) \big \Vert_{\mathcal H}^{2p} \mu^{(a)}_{\sf cos}(dz) \Big)^{1/2p} 
%}
&\leq  \vert a \vert \sum_{n=0}^{\infty} \frac{ \vert a \vert^{2n+1}}{(2n+1)!} 
\Big [ \int_{E} \Big \{ \sum_{k\in \mathbb Z^{2}} \Big( \lambda_{k}^{-\gamma/2} 
\Phi_{2n+1}(z)(e_{k}) \Big)^{2} \Big \}^{p} \mu^{(a)}_{\sf cos}(dz) \Big ]^{1/2p}
\nonumber \\
& \lesssim  \vert a \vert \sum_{n=0}^{\infty} \frac{ \vert a \vert^{2n+1}}{(2n+1)!} 
\Big( \int_{E} \big \Vert  \Phi_{2n+1}   \big \Vert_{H^{-\gamma}}^{2p(1+\varepsilon)} \hspace{0.5mm}
\mu_{0}(dz)
\Big )^{1/2p(1+\varepsilon)}
\nonumber \\
&\mbox{ } \hspace{15mm} \times
\Big( \int_{E} 
\exp \big( -\frac{1+\varepsilon}{\varepsilon}
:\hspace{-0.8mm} \cos (az) \hspace{-0.8mm}:\hspace{-0.5mm} ({\bf 1}_{\mathbb T^{2}}) \big) 
\hspace{0.5mm} 
\mu_{0}(dz)
\Big )^{\varepsilon/2p(1+\varepsilon)}
\nonumber \\
&\lesssim
\vert a \vert \sum_{n=0}^{\infty} \frac{ \vert a \vert^{2n+1}}{(2n+1)!} \big \Vert \Phi_{2n+1} \big 
\Vert_{L^{2p(1+\varepsilon)}(\mu_{0};H^{-\gamma}(\mathbb T^{2}))}
\label{funano}
\end{align}
for any $\varepsilon>0$.
By virtue of 
Corollary \ref{Wick-Tri}, the right-hand side of (\ref{funano}) is finite under the condition $2p(1+\varepsilon) 
<1+\frac{4\pi\gamma}{a^{2}}$.
Since we may take $\varepsilon>0$ sufficiently small,
this completes the proof.
%\qed
\end{proof}
%%%%%%%%%%%%%%%%%%%%%%%%%%%%%%%%%%%%%%%%%
%%%%%%%%%%%%%%%%%%%%%%%%%%%%%%%
%%%%%%%%%%%%%%%%%%%%%%%%%%%%%%%%%%%%%%%%%
%
\renewcommand{\theequation}{A.\arabic{equation}}
\renewcommand{\thesection}{A}
\setcounter{tm}{0}
\setcounter{equation}{0}
%
%%%%%%%%%%%%%%%%%%%%%%%%%%%%%%%%%%%%%%%%%%%
%%%%%%%%%%
\section{Auxiliary 
%estimates}
results}
%%%%%%%%%%%%%%%%%%%%%%%%%%%%%%%%%%%%%%%%%%%%%%%%%%%%%%%%
In this appendix, 
%we collect,
%present,
for the reader's convenience, 
we collect
%elementary lemmas 
auxiliary results
which are used in the proof of
the main results.
%Proposition \ref{Def-Wick}.
%%%%%%%%%%%%%%%%%%%%%%%%%%%%%%%%%%%%%%%%%%%%%%%%%%%%%%%%%%%%%%%%%%%%%%%%%%%%%%%%%%%%%%%%%%%%%%%%%%%%%%%%%%%%%%%%%
%%%%%%%%%%%%%%%%%%%%%%%%%%%%%%%%%%%%%%%%%%%%%%%%%%%%%%%%%%%%%%%%%%%%%%%
\begin{lm}
\label{HY-Z}
Let $\Lambda$ and $\Lambda'$ be subsets of $\mathbb R^{2}$ satisfying {\rm{(\ref{Lambda-condition})}}, and 
$K^{(1)}_{\Lambda}$
%=K_{N}^{(1)}(x,y)$
%\in L^{2}(\mathbb T^{2} \times \mathbb T^{2})$ 
be the approximating Green kernel of $(1-\Delta)$ defined in {\rm{(\ref{KN-delta})}}. 
Then for any $n\in \mathbb N$, $q\geq 2$, 
%and two integers $M\geq N$, 
we have
\begin{eqnarray}
\Vert K_{\Lambda}^{(1)} \Vert_{L^{qn}(\mathbb T^{2})}
&\leq & \big( \frac{1}{{2}\pi} \big)^{\frac{2(qn-1)}{qn}}
\Big \{ \sum_{k\in \mathbb Z^{2}} {\bf 1}_{\Lambda}(k)
\big(\frac{1}{1+\vert k \vert^{2}} \big)^{\frac{qn}{qn-1}} \Big \}^{\frac{qn-1}{qn}},
\label{HY1}
\\
\Vert K_{\Lambda'}^{(1)}-K_{\Lambda}^{(1)} \Vert_{L^{qn}(\mathbb T^{2})}
&\leq & \big( \frac{1}{{2}\pi} \big)^{\frac{2(qn-1)}{qn}}
\Big \{ \sum_{k\in \mathbb Z^{2}} 
{\bf 1}_{\Lambda' \setminus \Lambda}(k) 
\big(\frac{1}{1+\vert k \vert^{2}} \big)^{\frac{qn}{qn-1}} \Big \}^{\frac{qn-1}{qn}}, \quad \Lambda \subset \Lambda'.
\label{HY2}
\end{eqnarray}
\end{lm}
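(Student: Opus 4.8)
The estimates (\ref{HY1}) and (\ref{HY2}) are exactly of Hausdorff--Young type, so the plan is to apply the Hausdorff--Young inequality on $\mathbb{T}^2$ directly to the Fourier series defining $K^{(1)}_{\Lambda}$ (which is in fact a trigonometric polynomial, since $\Lambda$ is compact and hence $\Lambda\cap\mathbb{Z}^2$ is finite, so no convergence issue arises). The only genuine bookkeeping is to track the powers of $2\pi$ coming from the normalization of Lebesgue measure on $\mathbb{T}^2$ and of the basis $\{{\bf e}_k\}$, and to check that the relevant conjugate exponent lies in the admissible range.

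First I would record the precise form of Hausdorff--Young to be used: for $1\le p\le 2$ with conjugate exponent $p'$ and any sequence $(c_k)_{k\in\mathbb{Z}^2}\in\ell^p(\mathbb{Z}^2)$, the function $g(x)=\sum_{k\in\mathbb{Z}^2}c_k\,e^{\sqrt{-1}\,k\cdot x}$ satisfies
\[
\|g\|_{L^{p'}(\mathbb{T}^2)}\ \le\ (2\pi)^{2/p'}\Big(\sum_{k\in\mathbb{Z}^2}|c_k|^p\Big)^{1/p}.
\]
This follows by Riesz--Thorin interpolation between the trivial bound $\|g\|_{L^\infty}\le\|(c_k)\|_{\ell^1}$ ($p=1$) and the Parseval identity $\|g\|_{L^2(\mathbb{T}^2)}=2\pi\,\|(c_k)\|_{\ell^2}$ ($p=2$, using $\int_{\mathbb{T}^2}e^{\sqrt{-1}(k-l)\cdot x}\,dx=(2\pi)^2\delta_{k,l}$). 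I then apply this with the conjugate pair $p'=qn$, $p=\frac{qn}{qn-1}$; this is legitimate precisely because the hypotheses $q\ge 2$ and $n\in\mathbb{N}$ force $qn\ge 2$, hence $p\in(1,2]$.

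It remains to identify the coefficient sequence. Unwinding (\ref{KN-delta}) together with ${\bf e}_k(x)=(2\pi)^{-1}e^{\sqrt{-1}\,k\cdot x}$ gives $K^{(1)}_{\Lambda}(x)=\frac{1}{4\pi^2}\sum_{k\in\mathbb{Z}^2}{\bf 1}_{\Lambda}(k)(1+|k|^2)^{-1}e^{\sqrt{-1}\,k\cdot x}$, so the coefficients are $c_k=\frac{1}{4\pi^2}{\bf 1}_{\Lambda}(k)(1+|k|^2)^{-1}$. Plugging into the displayed inequality and using $(2\pi)^{2/(qn)}\cdot(2\pi)^{-2}=(2\pi)^{-2(qn-1)/(qn)}$ together with $\frac1p=\frac{qn-1}{qn}$ yields (\ref{HY1}) verbatim. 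For (\ref{HY2}), since $\Lambda\subset\Lambda'$ one has $K^{(1)}_{\Lambda'}-K^{(1)}_{\Lambda}=\frac{1}{4\pi^2}\sum_{k\in\mathbb{Z}^2}{\bf 1}_{\Lambda'\setminus\Lambda}(k)(1+|k|^2)^{-1}e^{\sqrt{-1}\,k\cdot x}$, and applying the same Hausdorff--Young bound with coefficients $c_k=\frac{1}{4\pi^2}{\bf 1}_{\Lambda'\setminus\Lambda}(k)(1+|k|^2)^{-1}$ gives the second claim. There is no real obstacle here: the proof is a one-line application of Hausdorff--Young, and the only care needed is (i) the $2\pi$-power bookkeeping just described and (ii) the observation that $qn\ge 2$, which is exactly what the assumptions on $q$ and $n$ provide so that the conjugate exponent falls in the range $[1,2]$ where Hausdorff--Young applies. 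If one prefers not to reprove the interpolation step, one may simply cite Hausdorff--Young for the torus from a standard reference, adjusting the constant for the unnormalized Lebesgue measure.
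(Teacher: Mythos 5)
Your proposal is correct and follows essentially the same route as the paper: both results are obtained by a direct application of the Hausdorff--Young inequality on $\mathbb{T}^{2}$ with exponent pair $(qn,\tfrac{qn}{qn-1})$, using $qn\geq 2$ to place the conjugate index in $[1,2]$, and your $2\pi$-bookkeeping reproduces the constant $(\tfrac{1}{2\pi})^{2(qn-1)/(qn)}$ exactly. The only cosmetic difference is that you re-derive Hausdorff--Young via Riesz--Thorin, whereas the paper simply cites Zygmund.
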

%%%%%%%%%%%%%%%%%%%%%%%%%%%%%%%%%%%%%%%%%%%%%%%%%%%%%%%%%%%%%%%%%%%%%%
\begin{proof}
By recalling the Hausdorff-Young inequality (cf. \cite[Theorem 2.8 in Chapter XII]{Zyg}), we have
%
%:{\mathbb T}^{2}\times {\mathbb T}^{2} \to \mathbb R$:
\begin{equation}
\Vert f \Vert_{L^{r}({\mathbb T}^{2})} \leq \big (\frac{1}{2\pi} \big)^{\frac{2}{s}-1} 
%\Big \{ 
\Big (
\sum_{k\in \mathbb Z^{2}}
%\big( \int_{\mathbb T^{2}}f(x)e_{k}(x)dx \big )^{q} 
\vert {\hat f}(k)\vert^{s}
%\Big \}^{1/q},
\Big)^{1/s},\qquad f\in L^{r}({\mathbb T}^{2}), \quad r\geq 2,
\label{HY}
\end{equation}
where 
%$\{e_{k}=e_{k}(x): k\in \mathbb Z^{2} \}$ is a CONS of $L^{2}({\mathbb T}^{2})$,
%$p\geq 2$ and 
$1\leq s\leq 2$ 
is the conjugate index of $r$.
Now we set $f=K^{(1)}_{\Lambda}$. Replacing $r$ and $s$ by
$qn(\geq 2)$ and $\frac{qn}{qn-1}$, respectively, we easily see that
(\ref{HY}) implies estimate (\ref{HY1}). Similarly, estimate (\ref{HY2}) is 
obtained by putting $f=K_{\Lambda'}-K_{\Lambda}$.
\end{proof}
%%%%%%%%%%%%%%%%%%%%%%%%%%%%%%%%%%%%%%%%%%%%%%%%%%%
%%%%%%%%%%%%%%%%%%%%%%%%%%%%%%%%%%%%%%%%%%%%%%%%%%%
\begin{pr} \label{Green}
Let
%The Green function 
$G^{(\alpha)}(x,y)=G^{(\alpha)}(x-y)$ 
$(0<\alpha \leq 1, x,y\in {\mathbb R}^{2})$ be the Green function for
$(1-\Delta)^{\alpha}$ on $\mathbb R^{2}$. 
Then $G^{(\alpha)}$ is smooth on $\mathbb R^{2} \setminus \{0 \}$ and strictly positive.
%has the integral representation
%$$ G^{(\alpha)}(x,y)=\frac{1}{4\pi \Gamma(\alpha)} \int_{0}^{\infty} e^{-s} \exp 
%\big( -\frac{ \vert x-y \vert^{2}_{\mathbb R^{2}}}{4s}
%\big) s^{\alpha-2} ds, \quad x,y \in \mathbb R^{2}.
%$$
%By this formula, 
Furthermore, there exist some constants $C_{1}, C_{2}, C_{3}>0$ such that the following holds:
if $0<\alpha<1$, then 
%%%%%%%%%%%%%%%%%%
\begin{equation}
G^{(\alpha)}(x-y)
\leq
\begin{cases}
\displaystyle{C_{1} \vert x-y \vert_{\mathbb R^{2}}^{2\alpha-2}}
& \text{ for $\vert x-y \vert_{\mathbb R^{2}} <1$}
\vspace{1.5mm} \\ 
\displaystyle{
C_{2} \exp(-C_{3}\vert x-y \vert_{\mathbb R^{2}})
}
& \text{ for $\vert x-y \vert_{\mathbb R^{2}} \geq 1$},
\end{cases}
\label{A10}
\end{equation}
and if $\alpha=1$, then
\begin{equation}
G^{(1)}(x-y)
\leq
\begin{cases}
\displaystyle{
-\frac{1}{2\pi} \log  \vert x-y \vert_{\mathbb R^{2}}+C_{1}}
& \text{ for $\vert x-y \vert_{\mathbb R^{2}} <1$}
\vspace{1.5mm} \\ 
\displaystyle{
C_{2} \exp(-C_{3}\vert x-y \vert_{\mathbb R^{2}})
}
& \text{ for $\vert x-y \vert_{\mathbb R^{2}} \geq 1$}.
\end{cases}
\label{A11}
\end{equation}
%%%%%%%%%%%%%%%%%%
\end{pr}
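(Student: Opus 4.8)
The plan is to obtain an explicit integral representation of $G^{(\alpha)}$ via subordination and to read off every assertion from it. Since $(1-\Delta)^{-\alpha}=\frac{1}{\Gamma(\alpha)}\int_{0}^{\infty}t^{\alpha-1}e^{-t}e^{t\Delta}\,dt$ as operators on $L^{2}(\mathbb R^{2})$, and the heat kernel on $\mathbb R^{2}$ is $p_{t}(x)=\frac{1}{4\pi t}e^{-|x|^{2}/(4t)}$, I would first record that $G^{(\alpha)}$ is translation invariant, $G^{(\alpha)}(x,y)=G^{(\alpha)}(x-y)$, with
$$ G^{(\alpha)}(x)=\frac{1}{4\pi\Gamma(\alpha)}\int_{0}^{\infty}t^{\alpha-2}\,e^{-t-|x|^{2}/(4t)}\,dt,\qquad x\in\mathbb R^{2}\setminus\{0\}. $$
Strict positivity is then immediate because the integrand is strictly positive for every $t>0$. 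Smoothness on $\mathbb R^{2}\setminus\{0\}$ follows by differentiating under the integral sign: on any region $|x|\ge\varepsilon$, each $x$-derivative produces polynomial factors in $1/t$, which for $t\to 0$ are dominated by the Gaussian factor $e^{-\varepsilon^{2}/(4t)}$ and for $t\to\infty$ by $e^{-t}$, so the differentiated integrals converge locally uniformly and $G^{(\alpha)}\in C^{\infty}(\mathbb R^{2}\setminus\{0\})$.

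For the behaviour near the origin when $0<\alpha<1$, I would substitute $t=|x|^{2}u/4$, obtaining
$$ G^{(\alpha)}(x)=\frac{4^{1-\alpha}}{4\pi\Gamma(\alpha)}\,|x|^{2\alpha-2}\int_{0}^{\infty}u^{\alpha-2}\,e^{-|x|^{2}u/4-1/u}\,du\le C_{1}\,|x|^{2\alpha-2}, $$
since $\int_{0}^{\infty}u^{\alpha-2}e^{-1/u}\,du=\Gamma(1-\alpha)<\infty$ (finite precisely because $\alpha<1$); this bound in fact holds for all $x\neq 0$. For $\alpha=1$ I would instead split the $t$-integral at $t=1$: the tail $\int_{1}^{\infty}t^{-1}e^{-t-|x|^{2}/(4t)}\,dt$ is bounded by an absolute constant, while for the head $\int_{0}^{1}t^{-1}e^{-t-|x|^{2}/(4t)}\,dt\le\int_{0}^{1}t^{-1}e^{-|x|^{2}/(4t)}\,dt$ the change of variables $s=|x|^{2}/(4t)$ gives $\int_{|x|^{2}/4}^{\infty}s^{-1}e^{-s}\,ds\le -\log(|x|^{2}/4)+C$ for $|x|<1$; together these yield $G^{(1)}(x)\le -\frac{1}{2\pi}\log|x|+C_{1}$.

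For the decay at infinity (both cases simultaneously) I would use the elementary bound $t+\frac{|x|^{2}}{4t}\ge\frac{|x|}{2}+\frac{t}{2}+\frac{|x|^{2}}{8t}$, obtained by writing $t+\frac{|x|^{2}}{4t}=\frac12\!\left(t+\frac{|x|^{2}}{4t}\right)+\frac12\!\left(t+\frac{|x|^{2}}{4t}\right)$ and applying the arithmetic--geometric mean inequality to the first bracket. Hence for $|x|\ge 1$,
$$ G^{(\alpha)}(x)\le\frac{e^{-|x|/2}}{4\pi\Gamma(\alpha)}\int_{0}^{\infty}t^{\alpha-2}\,e^{-t/2-1/(8t)}\,dt=C_{2}\,e^{-|x|/2}, $$
the integral being finite because $e^{-1/(8t)}$ controls the singularity at $t=0$ even when $\alpha<1$ and $e^{-t/2}$ controls $t\to\infty$; this gives the exponential part of \eqref{A10} and \eqref{A11} with $C_{3}=1/2$. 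The only delicate point is ensuring that the estimates are genuinely uniform rather than merely asymptotic as $|x|\to 0$ or $|x|\to\infty$ — in particular retaining the term $\frac{|x|^{2}}{8t}$ (resp.\ $\frac{1}{8t}$) in the exponent so that the $t$-integrals still converge at $t=0$ in the regime $\alpha<1$ — but no singular-integral theory or Bessel asymptotics are actually needed. (Alternatively, for $\alpha=1$ one may simply identify $G^{(1)}=\frac{1}{2\pi}K_{0}(|\cdot|)$ and invoke the classical small- and large-argument asymptotics of the modified Bessel function $K_{0}$; see also \cite{AS61}.)
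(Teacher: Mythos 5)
Your argument is correct, and it starts from the same place as the paper's own proof: the subordination formula $G^{(\alpha)}(x)=\frac{1}{4\pi\Gamma(\alpha)}\int_{0}^{\infty}s^{\alpha-2}e^{-s-|x|^{2}/(4s)}\,ds$. The near-origin bound for $0<\alpha<1$ is essentially identical (the paper rescales to pull out $|x|^{2\alpha-2}$ and bounds the remaining integral by $4^{1-\alpha}\Gamma(1-\alpha)$, exactly as you do). The two proofs diverge only in the large-distance estimate and in scope. For $|x|\ge 1$ the paper completes the square via the substitution $\tau=\frac{\sqrt t}{2}-\frac{r}{\sqrt t}$ and obtains the sharper bound $I(r)\lesssim e^{-r}r^{(1-2\alpha)/2}$ (hence decay rate $C_{3}=1$), whereas your arithmetic--geometric mean splitting $t+\frac{|x|^{2}}{4t}\ge\frac{|x|}{2}+\frac{t}{2}+\frac{|x|^{2}}{8t}$ is simpler, gives $C_{3}=\frac12$ (which is all the statement requires), and --- because you retain the term $\frac{1}{8t}$ to tame the singularity of $t^{\alpha-2}$ at $t=0$ --- works verbatim for $\alpha=1$ as well. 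That matters because the paper does not prove the $\alpha=1$ case at all: it cites \cite{AS61}, \cite{Si} and \cite{GJ} for the logarithmic bound and the exponential decay, and it likewise leaves smoothness and strict positivity to the references, while you supply short direct arguments (differentiation under the integral sign, positivity of the integrand, and the split of the $t$-integral at $t=1$ with the incomplete-gamma estimate for the logarithm). So your write-up is a slightly more self-contained variant of the same method; nothing is missing.
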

%%%%%%%%%%%%%%%%%%%%%%%%%%%%%%%%%%%%%%
\begin{proof}
One can find a proof of (\ref{A11}) e.g., in
\cite[Section 4]{AS61},
\cite[Proposition V.23]{Si} and 
\cite[Proposition 7.2.1]{GJ}.
Although the case $0<\alpha<1$ is also treated in 
\cite[Section 4]{AS61},
we give a short proof of (\ref{A10}) here
%for the reader's convenience.
%for the purpose of 
to make the present paper more self-contained.

We first recall an integral representation of the Green function 
$$ G^{(\alpha)}(x-y)=\frac{1}{4\pi \Gamma(\alpha)} \int_{0}^{\infty} e^{-s} \exp 
\big( -\frac{ \vert x-y \vert^{2}_{\mathbb R^{2}}}{4s}
\big) s^{\alpha-2} ds, \qquad 0<\alpha \leq 1,~ x,y \in \mathbb R^{2}.
$$
Setting $s=\frac{ \vert x-y \vert_{\mathbb R^{2}} 
}{t}$, we have 
\begin{equation}
G^{(\alpha)}(x-y)=\frac{
\vert x-y \vert_{\mathbb R^{2}}^{2\alpha-2}}{4\pi \Gamma(\alpha)} I(
\vert x-y \vert_{\mathbb R^{2}}),
\label{A12}
%\int_{0}^{\infty}
%\tau^{-\alpha} \exp \big(-\frac{\tau}{4}-\frac{\vert x-y \vert^{2}}{\tau} \big) d\tau
\end{equation}
where
$$
I(r)=\int_{0}^{\infty}
t^{-\alpha} \exp \big(-\frac{t}{4}-\frac{r^{2}}{t} \big) dt, \qquad r\geq 0.
$$
%In the case $r=\vert x-y \vert_{\mathbb R^{2}}<1$, we easily have
Combining (\ref{A12}) with 
$$ I(r)\leq \int_{0}^{\infty} t^{-\alpha} e^{-t/4} dt =4^{1-\alpha}\Gamma(1-\alpha),$$
we obtain the desired estimate in the case 
$\vert x-y \vert_{\mathbb R^{2}}<1$.

Next we consider the case $r=\vert x-y \vert_{\mathbb R^{2}}\geq 1$. We set
${\tau}=\frac{\sqrt t}{2}-\frac{r}{\sqrt t}$, in other word,
$t=({\tau}+{\sqrt{
{\tau}^{2}+2r}})^{2}$. Then 
we have
\begin{align}
I(r)&=2e^{-r} \int_{\mathbb R} 
\frac{\big({\tau}+{\sqrt{
{\tau}^{2}+2r}} \big)^{2-2\alpha}
}
{
{\sqrt{
{\tau}^{2}+2r}}
}
e^{-\tau^{2}}d\tau
\nonumber \\
&= {2}e^{-r}r^{\frac{1-2\alpha}{2}} \int_{\mathbb R}
\frac{ 
\Big(\frac{\tau}{\sqrt r}+{\sqrt{
\frac{{\tau}^{2}}{r}+2}} \Big)^{2-2\alpha}
}
{\sqrt{ \frac{\tau^{2}}{r}+2}
}
e^{-\tau^{2}} d\tau
\nonumber \\
&\leq 
{\sqrt 2}
e^{-r}r^{\frac{1-2\alpha}{2}} \int_{\mathbb R}
\big({\tau}+{\sqrt{
{\tau}^{2}+2}} \big)^{2-2\alpha}
e^{-\tau^{2}} d\tau
~
\lesssim ~e^{-r}r^{\frac{1-2\alpha}{2}},
\nonumber
\end{align}
where we used $r\geq 1$ and $2-2\alpha>0$ for the third line.
Combining this estimate with (\ref{A12}), we finally obtain
$$ G^{(\alpha)} (x-y) \leq C \exp (-\vert x-y \vert_{\mathbb R^{2}})
\vert x-y \vert_{\mathbb R^{2}}^{\alpha-\frac{3}{2}} \leq 
C \exp (-\vert x-y \vert_{\mathbb R^{2}}),
$$
where we used 
$\vert x-y \vert_{\mathbb R^{2}}^{\alpha-\frac{3}{2}}\leq 1$. This
completes the proof.
\end{proof}
%%%%%%%%%%%%%%%%%%%%%%%%%%%%%%%%%%%%%%%%%%%%%%%%%
%%%%%%%%%%%%%%%%%%%%%%%%%%%%%%%%%%%%%%%%%%%%%%%%%%
\noindent
{\textbf{Acknowledgments.}}~The authors are grateful to Professor Seiichiro Kusuoka
for his valuable suggestions and constant encouragement during the preparation of the
present paper. %The second named author 
H.K. also thanks Professors Masato Hoshino,
Tomoyuki Kakehi and Kaneharu Tsuchida
for helpful discussions. 
This work was initiated during a stay of 
%the second named author 
H.K. at Hausdorff Center for Mathematics (HCM), 
Universit\"at Bonn in fall 2012. 
He thanks Professor Benjamin Schlein (now at
Universit\"at Z\"urich) 
for a kind invitation and the financial support 
during his stay at HCM.
He was also partially supported by JSPS through the Grant-in-Aid for Scientific Research (C) 
26400134 and 17K05300.
H.K. and M.R. were partially supported by DFG through
CRC 701 ``Spectral Structures and Topological Methods in Mathematics" and
CRC 1283
``Taming uncertainty and profiting from randomness and low regularity in analysis, 
stochastics and their applications".
% and CRC 701
%``Spectral Structures and Topological Methods in Mathematics", DFG.
%%%%%%%%%%%%%%%%%%%%%%%%%%%%%%%%%%%%%%%%%%%%%%%%%%

\end{document}